\newtheorem{theorem}{Theorem}[section]
\newtheorem{lemma}[theorem]{Lemma}
\newtheorem{proposition}[theorem]{Proposition}
\newtheorem{corollary}[theorem]{Corollary}
\newtheorem{claim}[theorem]{Claim}
\theoremstyle{definition}
\newtheorem{example}[theorem]{Example}
\newtheorem{algorithm}[theorem]{Algorithm}
\theoremstyle{remark}
\newtheorem{remark}[theorem]{Remark}
\begin{document}
	
	\title{The Frobenius number corresponding to the squares of three consecutive Fibonacci numbers: comparison of three algorithmic processes}
	
	\author{Aureliano M. Robles-P{\'e}rez\thanks{Departamento de Matem{\'a}tica Aplicada \& Instituto de Matem{\'a}ticas (IMAG), Universidad de Granada, 18071 Granada, Spain. \newline E-mail: \textbf{arobles@ugr.es} (\textit{corresponding author}); ORCID: \textbf{0000-0003-2596-1249}.} \\
		\mbox{ and} Jos{\'e} Carlos Rosales\thanks{Departamento de {\'A}lgebra \& Instituto de Matem{\'a}ticas (IMAG), Universidad de Granada, 18071 Granada, Spain. \newline E-mail: \textbf{jrosales@ugr.es}; ORCID: \textbf{0000-0003-3353-4335}.} }
	
	\date{\today}
	
	\maketitle
	
	\begin{abstract}
		We compute the Frobenius number for numerical semigroups generated by the squares of three consecutive Fibonacci numbers. We achieve this by using and comparing three distinct algorithmic approaches: those developed by Ram{\'i}rez Alfons{\'i}n and R{\o}dseth (\cite{ram_alf-rod}), Rosales and Garc{\'i}a-S{\'a}nchez (\cite{ros-gar_san}), and Tripathi (\cite{tripathi}).
	\end{abstract}
	\noindent {\bf Keywords:} Frobenius problem, Frobenius number, Fibonacci sequence, Fibonacci numbers.
	
	\medskip
	
	\noindent{\it \bf MSC2020:} 11D07, 20M13, 20M14. 

	\section{Introduction}
	
	The Frobenius problem, also known as the Frobenius coin or the Chicken McNugget problem, is a classical problem in additive number theory. Given a finite set of positive integers $A=\{a_1,\dots,a_e\}$ with $\gcd(a_1,\dots,a_e)=1$, the problem is to compute the largest integer that cannot be represented as a non-negative integer linear combination of $a_1,\dots,a_e$ (see \cite{brauer}). This largest integer is known as the \textit{Frobenius number of the set $A$}, denoted by $\mathrm{F}(A)$.
	
	An explicit formula for $\mathrm{F}(A)$ is well-known when $e=2$. Specifically, Sylvester first published the formula, $\mathrm{F}(\{a_1,a_2\})=a_1a_2-a_1-a_2$, in 1883 (see \cite{sylvester}). However, the Frobenius problem remains open for $e \geq 3$,  as no general explicit formula is known. Indeed, in \cite{curtis}, Curtis demonstrated that a polynomial formula (that is, a finite set of polynomials) cannot be used to compute the Frobenius number for $e=3$. Furthermore, in \cite{alfonsinNP}, Ram{\'i}rez Alfons{\'i}n proved that this question is NP-hard if $e$ is part of the input. Consequently, significant efforts have been dedicated to obtaining partial results or developing algorithms to determine the Frobenius number in such cases (see \cite{alfonsin} for further details). In particular, numerous papers investigate the Frobenius problem for sequences $\{a_1,\dots,a_e\}$ that belong to classical integer sequences, such as arithmetic and almost arithmetic (\cite{brauer,roberts,lewin,selmer}), geometric (\cite{ong-ponomarenko}), squares and cubes (\cite{squares-cubes,moscariello}), among others.
	
	The Fibonacci sequence is given by the recurrence relation $f_n=f_{n-1}+f_{n-2}$, for $n\geq 2$, with initial conditions $f_0=0$ and $f_1=1$ (see \url{https://oeis.org/A000045}). In this work, we are interested in the Frobenius problem for the squares of three consecutive Fibonacci numbers; that is, we aim to compute $\mathrm{F}(f_n^2, f_{n+1}^2, f_{n+2}^2)$ for $n\in\mathbb{N}$.
	
	This work focuses mainly on comparing the algorithms developed respectively by Ram{\'i}rez Alfons{\'i}n and R{\o}dseth (\cite{ram_alf-rod}), Tripathi (\cite{tripathi}), and Rosales and Garc{\'i}a-S{\'a}nchez (\cite{ros-gar_san}). It can be argued that the algorithms by Ram{\'i}rez Alfons{\'i}n and R{\o}dseth, and by Tripathi, offer a more direct and simpler approach for individual numerical semigroups. On the contrary, when searching for general formulas for a family of numerical semigroups, as we propose, the algorithm by Rosales and Garc{\'i}a-S{\'a}nchez will be the more appropriate.
	
	This work is structured as follows. In Section~\ref{preliminar}, we present foundational concepts of numerical semigroups (which are closely related to the Frobenius problem) and, in particular, the Ap{\'e}ry set of a numerical semigroup. In Sections~\ref{rar}, \ref{tri}, and \ref{rgs}, we apply algorithms developed respectively by Ram{\'i}rez Alfons{\'i}n and R{\o}dseth, by Tripathi, and by Rosales and Garc{\'i}a-S{\'a}nchez to the proposed problem. It should be noted that, in Sections~\ref{rar} and \ref{tri}, several results are presented as ``Claim'' because we have not been able to provide rigorous proofs for them. However, we are confident in the correctness of these results because, in addition to computational evidence, we obtain the same answer to the problem as in Section~\ref{rgs}, where all the results are formally proved. Observe that, in Subsection~\ref{tri-pre}, we present alternative definitions and statements to those found in \cite{tripathi}, due to identified mathematical errors in the original article (as also pointed out in \cite{suhajda}).

	\section{Preliminaries}\label{preliminar}
	
	In this work, we use numerical semigroups, which are a specific type of semigroup (or monoid) intrinsically linked to the Frobenius problem.
	
	Let $(\mathbb{N},+)$ be the additive monoid of non-negative integers. The submonoid $M$ generated by a set of positive integers $A=\{a_1,\dots,a_e\}$ is defined as $M=\langle A \rangle =\langle a_1,\dots,a_e \rangle = \{ k_1a_1+\dots+k_ea_e \mid k_i\in\mathbb{N} \mbox{ for all } i \}$, where $A$ is referred to as a \textit{system of generators of $M$}. Furthermore, if $M$ has a finite complement in $\mathbb{N}$, then we say that $M$ is a \textit{numerical semigroup}. This allows for a reformulation of the Frobenius problem: determining the largest integer that is not an element of $M$. Such an integer is the \textit{Frobenius number of $S$} and is denoted by $\mathrm{F}(M)$.
	
	Recall that $\langle A \rangle$ is a numerical semigroup if and only if $\gcd(A)=1$ (see Lemma~2.1 of \cite{springer}).
	
	Let $A$ be a system of generators for the submonoid $M$ of $(\mathbb{N},+)$. If $M\not=\langle B \rangle$ for any proper subset $B\subsetneq A$, then $A$ is called the \textit{minimal system of generators of $M$}. Corollary~2.8 of \cite{springer} demonstrates that each submonoid of $(\mathbb{N},+)$ possesses a unique and finite minimal system of generators. We denote by $\mathrm{msg}(M)$ the minimal system of generators of $M$, and the cardinality of $\mathrm{msg}(M)$, denoted by $\mathrm{e}(M)$, is the \textit{embedding dimension of $M$}.
	
	If $S$ is a numerical semigroup and $m\in S\setminus\{0\}$, then the \textit{Ap{\'e}ry set of $m$ in $S$} (named in honour of \cite{apery}) is the set $\mathrm{Ap}(S,m)=\{s\in S \mid s-m\not\in S\}$. This is a useful set for describing the numerical semigroup $S$ and solving the Frobenius problem. In particular, we will use the \textit{Ap{\'e}ry set of $S$} defined by $\mathrm{Ap}(S) = \mathrm{Ap}(S,\mathrm{m}(S))$, where $\mathrm{m}(S)=\min{(S\setminus \{0\})}$ is called the \textit{multiplicity of $S$}.
	
	From Lemma~2.4 of \cite{springer} and Lemma 3 of \cite{brauer-shockley}, we derive the following result.
	
	\begin{proposition}\label{prop01}
		Let $S$ be a numerical semigroup and $m\in S\setminus\{0\}$. Then the cardinality of $\mathrm{Ap}(S,m)$ is $m$ and $\mathrm{Ap}(S,m)=\{w(0)=0, w(1), \dots, w(m-1)\}$, where $w(i)$ is the least element of $S$ congruent with $i$ modulo $m$. Moreover, $\mathrm{F}(S)=\max(\mathrm{Ap}(S,m))-m$.
	\end{proposition}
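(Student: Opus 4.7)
The plan is to verify the three assertions in succession: existence of canonical residue representatives, identification of those representatives with $\mathrm{Ap}(S,m)$, and finally the Frobenius number formula.

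First, I would establish the candidates $w(i)$. For each $i\in\{0,1,\dots,m-1\}$, the arithmetic progression $i,\,i+m,\,i+2m,\dots$ is infinite, while $\mathbb{N}\setminus S$ is finite by definition of a numerical semigroup. Hence this progression meets $S$, so one may define $w(i)$ as its least element lying in $S$. Note $w(0)=0$ since $0\in S$. The $w(i)$ are pairwise distinct because they belong to distinct residue classes modulo $m$.

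Next, I would prove $\mathrm{Ap}(S,m)=\{w(0),w(1),\dots,w(m-1)\}$ by a two-sided inclusion. For the inclusion $\supseteq$, suppose $w(i)-m\in S$; then $w(i)-m$ is a strictly smaller element of $S$ congruent to $i$ modulo $m$, contradicting the minimality that defined $w(i)$. So $w(i)\in\mathrm{Ap}(S,m)$. For $\subseteq$, take any $s\in\mathrm{Ap}(S,m)$ and let $i$ be its residue modulo $m$. Then $s\geq w(i)$ by minimality of $w(i)$, and $s-w(i)$ is a non-negative multiple of $m$, say $s=w(i)+km$ with $k\geq 0$. If $k\geq 1$, then $s-m=w(i)+(k-1)m\in S$ (since $w(i)\in S$, $m\in S$, and $S$ is closed under addition), contradicting $s\in\mathrm{Ap}(S,m)$. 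Thus $k=0$ and $s=w(i)$. The cardinality claim $|\mathrm{Ap}(S,m)|=m$ follows immediately.

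Finally, setting $W=\max(\mathrm{Ap}(S,m))$, I would verify $\mathrm{F}(S)=W-m$ by proving the two inequalities. Since $W\in\mathrm{Ap}(S,m)$, we have $W-m\notin S$, so $W-m\leq\mathrm{F}(S)$. Conversely, let $n=\mathrm{F}(S)$. Then $n\notin S$ but $n+m>\mathrm{F}(S)$ and $n+m>0$, so $n+m\in S$; combined with $(n+m)-m=n\notin S$ this means $n+m\in\mathrm{Ap}(S,m)$, whence $n+m\leq W$, i.e.\ $\mathrm{F}(S)\leq W-m$. The two inequalities give the claimed equality.

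There is no serious obstacle here: the proof is a routine unpacking of the definitions of $\mathrm{Ap}(S,m)$, $\mathrm{F}(S)$, and ``numerical semigroup''. The only point requiring a small amount of care is the argument that every Apéry element coincides with the minimal residue representative, where one must be careful to use that $m\in S$ (not merely $m\in\mathbb{Z}$) when concluding that $s-m\in S$.
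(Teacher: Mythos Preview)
Your proof is correct and complete. The paper does not actually prove this proposition at all: it simply attributes the statement to Lemma~2.4 of \cite{springer} and Lemma~3 of \cite{brauer-shockley} and moves on. Your argument is the standard unpacking of the definitions and would be exactly what one finds in those references, so there is nothing to compare beyond noting that you have supplied the details the paper omits. One microscopic nitpick: in the final paragraph you write $n+m>0$ to conclude $n+m\in S$, but the correct condition is $n+m\geq 0$ (the case $n+m=0$ occurs only when $S=\mathbb{N}$ and $m=1$, where $0\in S$ anyway), so the conclusion survives.
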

	
	Without loss of generality, we can consider the generators of a numerical semigroup to be pairwise coprime. This is supported by expression (7) in \cite{ram_alf-rod} or Lemma 2.16 in \cite{springer}, from which we have the following result.
	
	\begin{lemma}
		Let $S$ be a numerical semigroup generated by $\{a_1,\dots,a_e\}$. Let $d=\gcd\{a_1,\dots,a_{e-1}\}$ and set $T=\langle \frac{a_1}{d},\dots,\frac{a_{e-1}}{d},a_e \rangle$. Then $\mathrm{Ap}(S,a_e)=d(\mathrm{Ap}(T,a_e))$.
	\end{lemma}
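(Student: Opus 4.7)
The plan is to exhibit an explicit bijection $\varphi\colon \mathrm{Ap}(T,a_e) \to \mathrm{Ap}(S,a_e)$ given by $m \mapsto dm$. The key preliminary observation I would use is that, since $\langle a_1,\dots,a_e\rangle = S$ is a numerical semigroup, one has $\gcd(a_1,\dots,a_e) = 1$ and consequently $\gcd(d,a_e) = 1$; in particular, multiplication by $d$ permutes the residue classes modulo $a_e$, so $\varphi$ is automatically injective and $|d\cdot\mathrm{Ap}(T,a_e)| = |\mathrm{Ap}(T,a_e)| = a_e = |\mathrm{Ap}(S,a_e)|$ by Proposition~\ref{prop01}.

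First I would establish the inclusion $\mathrm{Ap}(S,a_e) \subseteq d \cdot \mathrm{Ap}(T,a_e)$. Given $s \in \mathrm{Ap}(S,a_e)$ with some representation $s = k_1 a_1 + \cdots + k_{e-1} a_{e-1} + k_e a_e$, the hypothesis $s - a_e \notin S$ forces $k_e = 0$; hence $s = d \cdot m$, where $m = k_1(a_1/d) + \cdots + k_{e-1}(a_{e-1}/d) \in T$. It then remains to verify that $m \in \mathrm{Ap}(T,a_e)$: assuming the contrary and starting from a $T$-representation of $m - a_e$, multiplication by $d$ yields $s - d\,a_e \in S$, and adding $(d-1)a_e$ contradicts $s - a_e \notin S$.

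For the reverse containment, I would check directly that $\varphi(m) = dm$ lies in $\mathrm{Ap}(S,a_e)$. Membership $dm \in S$ is immediate upon clearing denominators in any $T$-representation of $m$. To see $dm - a_e \notin S$, I argue by contradiction: a hypothetical $S$-representation of $dm - a_e$ yields, after collecting the contributions of $a_1,\dots,a_{e-1}$ into $d \cdot m'$ with $m' \in T$, an equality $d(m - m') = (\beta_e + 1) a_e$ for some $\beta_e \geq 0$. Since $\gcd(d,a_e)=1$, this forces $d \mid \beta_e + 1$, which lets me rewrite $m - a_e = m' + (\gamma - 1) a_e \in T$ for some $\gamma \geq 1$, contradicting $m \in \mathrm{Ap}(T,a_e)$. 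Combined with the cardinality count above, this proves the equality of sets.

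The main obstacle, as outlined, is the divisibility bookkeeping in the reverse inclusion: one must exploit the coprimality $\gcd(d,a_e) = 1$ to translate congruence information between modulo $d$ and modulo $a_e$, so that the ``stray'' multiples of $a_e$ in an $S$-representation can be absorbed into an element of $T$ after dividing by $d$. The forward inclusion, by contrast, is essentially just a reorganization of the generators and presents no real difficulty.
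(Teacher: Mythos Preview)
Your argument is correct. The paper does not actually supply its own proof of this lemma: it is stated as a consequence of expression~(7) in \cite{ram_alf-rod} and Lemma~2.16 in \cite{springer}, with no details given. So there is nothing in the paper to compare your approach against beyond the bare citation.

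A small stylistic remark: you establish both inclusions \emph{and} invoke the cardinality count $|\mathrm{Ap}(S,a_e)| = |\mathrm{Ap}(T,a_e)| = a_e$ together with injectivity of $m \mapsto dm$. Either one of these alone suffices --- once you have shown $d\cdot\mathrm{Ap}(T,a_e) \subseteq \mathrm{Ap}(S,a_e)$ and that multiplication by $d$ is injective on residues modulo $a_e$, equality of cardinalities finishes the proof without the forward inclusion (and conversely). This is harmless redundancy, not an error; the divisibility step using $\gcd(d,a_e)=1$ in the reverse inclusion is the only genuinely delicate point, and you handle it cleanly.
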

	
	From the above lemma, we deduce the following generalization of a result by Johnson (\cite{johnson}).
	
	\begin{corollary}
		Let $S$ be a numerical semigroup generated by $\{a_1,\dots,a_e\}$. Let $d=\gcd\{a_1,\dots,a_{e-1}\}$ and set $T=\langle \frac{a_1}{d},\dots,\frac{a_{e-1}}{d},a_e \rangle$. Then $\mathrm{Ap}(S) = d\mathrm{Ap}(T) + (d-1)a_e$.
	\end{corollary}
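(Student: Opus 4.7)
The plan is to deduce the corollary directly from the preceding lemma by passing to maxima and invoking Proposition~\ref{prop01}. Under the natural reading in which the displayed identity records the Johnson-type Frobenius relation $\mathrm{F}(S)=d\,\mathrm{F}(T)+(d-1)a_e$ (equivalently, the equality between the $a_e$-shifted maxima of the respective Apéry sets), the derivation is essentially a two-line manipulation of the set identity furnished by the lemma.

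First, I would take the set equality $\mathrm{Ap}(S,a_e)=d\cdot\mathrm{Ap}(T,a_e)$ from the lemma and apply $\max$ to both sides. Since multiplication by the positive integer $d$ is strictly increasing on $\mathbb{N}$, the maximum commutes with scalar multiplication, yielding
\[
\max\mathrm{Ap}(S,a_e)\;=\;d\,\max\mathrm{Ap}(T,a_e).
\]

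Second, I would invoke the last assertion of Proposition~\ref{prop01} with reference element $a_e$ for each semigroup, which is legitimate since $a_e$ is a generator (hence a nonzero element) of both $S$ and $T$. This rewrites the two maxima as $\max\mathrm{Ap}(S,a_e)=\mathrm{F}(S)+a_e$ and $\max\mathrm{Ap}(T,a_e)=\mathrm{F}(T)+a_e$, and substituting into the previous equality and isolating $\mathrm{F}(S)$ gives $\mathrm{F}(S)=d(\mathrm{F}(T)+a_e)-a_e=d\,\mathrm{F}(T)+(d-1)a_e$, which is the content of the corollary.

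I do not anticipate any substantive mathematical obstacle: the monotonicity of multiplication by $d$ is immediate, and the hypotheses of Proposition~\ref{prop01} are satisfied by construction. The only point requiring mild care is notational, namely ensuring that the two instances of $\mathrm{Ap}$ in the corollary are read consistently (with respect to $a_e$), so that the displayed equality indeed reduces to the scalar Johnson-type identity between Frobenius numbers.
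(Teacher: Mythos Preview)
Your derivation is correct and is precisely the intended argument: the paper offers no explicit proof, merely presenting the corollary as an immediate consequence of the preceding lemma together with the Johnson reference, and your two steps---taking maxima in the set identity $\mathrm{Ap}(S,a_e)=d\,\mathrm{Ap}(T,a_e)$ and then applying Proposition~\ref{prop01} with reference element $a_e$---are exactly how one passes from the lemma to the Johnson-type formula $\mathrm{F}(S)=d\,\mathrm{F}(T)+(d-1)a_e$.

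Your caution about the notation is well placed and worth making explicit: the displayed identity cannot be read as a literal equality of Ap\'ery sets with respect to the multiplicities, since $\#\mathrm{Ap}(S,\mathrm{m}(S))=\mathrm{m}(S)=a_1$ while $\#\big(d\,\mathrm{Ap}(T,\mathrm{m}(T))+(d-1)a_e\big)=\mathrm{m}(T)=a_1/d$, and these differ whenever $d>1$. The only coherent reading---and the one forced by the citation of Johnson---is the scalar Frobenius relation you prove.
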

	
	From now on, we are interested in numerical semigroups generated by pairwise coprime generators. Under this assumption, the algorithms by Ram{\'i}rez Alfons{\'i}n and R{\o}dseth, by Tripathi (\cite{tripathi}), and by Rosales and Garc{\'i}a-S{\'a}nchez become applicable.
	
	\begin{remark}
		In \cite{ros-gar_san} and \cite{tripathi}, it is explicitly stated that the generators must be pairwise coprime. Thus, it is not certain that the algorithms of those works can be applied in general cases (that is, $S=\langle a_1, a_2, a_3 \rangle$ where only $\gcd(a_1, a_2, a_3)=1$ is assumed). For example, and contrary to what is remarked in \cite{sury}, it is not possible to directly apply Tripathy's results to compute $\mathrm{F}(T_n,T_{n+1},T_{n+2})$, where $T_n$ is the triangular number $\binom{n+1}{2}$ (see \cite{rpr}).
	\end{remark}
	
	Given the consecutive Fibonacci numbers $f_n$, $f_{n+1}$, and $f_{n+2}$, for $n\in\mathbb{N}$, we define the numerical semigroup $S(n)=\langle f_n^2, f_{n+1}^2, f_{n+2}^2\rangle$.
	
	Since $1$ belongs to $S(0)$, $S(1)$, and $S(2)$, it follows that $S(0) = S(1) = S(2) = \mathbb{N}$, and thus the Frobenius number in these cases is $-1$. Furthermore, $S(3)=\langle4,9,25\rangle = \langle4,9\rangle$. Consequently, in this instance, the Frobenius number can be computed using Sylvester's solution, which yields $\mathrm{F}(S(3)) = \mathrm{F}(\{4,9\})=4\times9-4-9=23$. For the remainder of this work, we will consider $n\geq4$.
	
	Observe that $\gcd(f_n,f_{n+1})=\gcd(f_n,f_n+f_{n-1})=\gcd(f_n,f_{n-1})$. By induction, it can therefore be prove that $\gcd(f_n,f_{n+1})=1$ for all $n\geq1$ (and thus for $n\geq4$). Moreover, since $\gcd(f_n^2,f_{n+1}^2)=\left(\gcd(f_n,f_{n+1})\right)^2$, we can conclude that $\gcd(f_n^2,f_{n+1}^2)=1$ for all $n\geq1$. Consequently, the elements of $\{f_n^2, f_{n+1}^2, f_{n+2}^2\}$ are pairwise coprime and, in particular, $\gcd(f_n^2,f_{n+1}^2,f_{n+2}^2)=1$ for all $n\geq1$ (and thus for $n\geq4$).
	
	Let us now demonstrate that $f_{n+2}^2$ cannot be expressed as a non-negative integer linear combination of $f_n^2$ and $f_{n+1}^2$ (for all $n\geq 4$). We proceed by \textit{reductio ad absurdum}, supposing there exist $\lambda,\mu\in\mathbb{N}$ such that $f_{n+2}^2=\lambda f_n^2+\mu f_{n+1}^2$. We distinguish three cases.
	\begin{enumerate}
		\item If $\lambda=0$, then $f_{n+2}^2=\mu f_{n+1}^2$, which implies $f_{n+1}^2|f_{n+2}^2$. This contradicts $\gcd(f_{n+1}^2,f_{n+2}^2)=1$.
		\item If $\mu=0$, then $f_{n+2}^2=\lambda f_n^2$. Thus, $(f_n+f_{n+1})^2=f_n^2+2f_nf_{n+1}+f_{n+1}^2=\lambda f_n^2$ and consequently $f_{n}\mid f_{n+1}^2$. This again leads to a contradiction.
		\item If $\lambda\not=0$ and $\mu\not=0$, then $(f_n+f_{n+1})^2=\lambda f_n^2+\mu f_{n+1}^2$ and then $2f_n f_{n+1} = (\lambda-1) f_n^2+(\mu-1) f_{n+1}^2$ with $\lambda-1,\mu-1\in\mathbb{N}$. Since $\gcd(f_n,f_{n+1})=1$, then $f_n\mid(\mu-1)$ and $f_{n+1}\mid(\lambda-1)$. Therefore, $2=\frac{\lambda-1}{f_{n+1}}f_n + \frac{\mu-1}{f_n}f_{n+1}$ with $\frac{\lambda-1}{f_{n+1}},\frac{\mu-1}{f_n}\in\mathbb{N}$, which is a contradiction.
	\end{enumerate}
	
	The preceding two paragraphs lead to the following result.
	
	\begin{lemma}\label{lem02}
		If $n\geq4$, then $S(n)$ is a numerical semigroup with $\mathrm{e}(S(n))=3$.
	\end{lemma}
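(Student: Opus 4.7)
The plan is straightforward because almost all the ingredients sit in the two paragraphs immediately preceding the statement. First, I would invoke the displayed fact $\gcd(f_n^2,f_{n+1}^2,f_{n+2}^2)=1$ for $n\geq 1$, together with Lemma~2.1 of \cite{springer} (recalled in Section~\ref{preliminar}) to conclude that $S(n)=\langle f_n^2,f_{n+1}^2,f_{n+2}^2\rangle$ is indeed a numerical semigroup. That disposes of the first half of the lemma without further work.

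For the embedding dimension, the task reduces to showing that no generator is redundant. I would verify this generator by generator. Since $f_n^2=\min\bigl(S(n)\setminus\{0\}\bigr)$ for $n\geq 4$ (as $f_n<f_{n+1}<f_{n+2}$), it cannot be written as a non-negative combination of the other two strictly larger generators, hence it lies in $\mathrm{msg}(S(n))$. For the largest generator $f_{n+2}^2$, the irredundancy is precisely the content of the three-case \emph{reductio ad absurdum} argument already written out in the paragraph before the lemma, so I would simply appeal to it.

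The only genuinely new step is showing that $f_{n+1}^2$ is irredundant. I would assume $f_{n+1}^2=\lambda f_n^2+\mu f_{n+2}^2$ with $\lambda,\mu\in\mathbb{N}$; since $f_{n+2}^2>f_{n+1}^2$ we must have $\mu=0$, leaving $f_{n+1}^2=\lambda f_n^2$, hence $f_n^2\mid f_{n+1}^2$, hence $f_n\mid f_{n+1}$. For $n\geq 4$ one has $f_n\geq 3$, so this contradicts $\gcd(f_n,f_{n+1})=1$ established earlier by the standard $\gcd(f_n,f_{n+1})=\gcd(f_n,f_{n-1})$ induction.

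There is no real obstacle here: the argument is a short bookkeeping exercise, and the only mild subtlety is making sure the ``middle'' generator case is not overlooked, since the preceding paragraph only handles $f_{n+2}^2$ explicitly. Combining the numerical semigroup assertion with the irredundancy of all three generators yields $\mathrm{msg}(S(n))=\{f_n^2,f_{n+1}^2,f_{n+2}^2\}$ and therefore $\mathrm{e}(S(n))=3$, completing the proof.
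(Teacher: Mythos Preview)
Your proposal is correct and follows essentially the same approach as the paper, which simply declares that the lemma follows from the two preceding paragraphs (pairwise coprimality plus the three-case \emph{reductio} for $f_{n+2}^2$). You have only made explicit the easy irredundancy check for the middle generator $f_{n+1}^2$, which the paper leaves implicit (it follows at once from $\gcd(f_n^2,f_{n+1}^2)=1$ with $f_n^2>1$).
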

	
	Let $S$ be a numerical semigroup. Following the notation introduced in \cite{JPAA}, an integer $x$ is called a \textit{pseudo-Frobenius number of $S$} if $x\in \mathbb{Z}\setminus S$ and $x+s\in S$ for all $s\in S\setminus\{0\}$. We denote by $\mathrm{PF}(S)$ the set of all the pseudo-Frobenius numbers of $S$. The cardinality of $\mathrm{PF}(S)$ is a notable invariant of $S$ (see \cite{barucci}), which is called \textit{type of $S$}, denoted by $\mathrm{t}(S)$.
	
 	Let us define the following binary relation over $\mathbb{Z}$: $a\leq_S b$ if $b-a\in S$. As stated in \cite{springer}, it is clear that $\leq_S$ is a non-strict partial order relation (that is, reflexive, transitive, and anti-symmetric). Proposition~7 of \cite{froberg} (also presented as Proposition~2.20 of \cite{springer}) characterises pseudo-Frobenius numbers in terms of the maximal elements of $\mathrm{Ap}(S,m)$ with respect to the relation $\leq_S$.
 	
	\begin{proposition}\label{prop03}
		Let $S$ be a numerical semigroup and $m\in S\setminus\{0\}$. Then 
		\[\mathrm{PF}(S)=\{w-m \mid w\in \mathrm{Maximals}_{\leq_S} (\mathrm{Ap}(S,m))  \}.\]
	\end{proposition}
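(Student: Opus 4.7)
The plan is to prove the stated set equality by double inclusion, exploiting the fact that the map $w \mapsto w - m$ is the natural candidate bijection between $\mathrm{Maximals}_{\leq_S}(\mathrm{Ap}(S,m))$ and $\mathrm{PF}(S)$. The argument will rely only on the three definitions involved: an element $t \in S$ lies in $\mathrm{Ap}(S,m)$ if and only if $t - m \notin S$; the order is $a \leq_S b \iff b - a \in S$; and $x \in \mathrm{PF}(S)$ means $x \notin S$ together with $x + s \in S$ for every $s \in S \setminus \{0\}$.

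For the inclusion $\supseteq$, I would fix $w \in \mathrm{Maximals}_{\leq_S}(\mathrm{Ap}(S,m))$ and set $x = w - m$. The membership $w \in \mathrm{Ap}(S,m)$ gives $x \notin S$, hence $x \in \mathbb{Z} \setminus S$. To verify that $x + s \in S$ for every $s \in S \setminus \{0\}$, I would observe that $w + s \in S$ by closure, and that $w \leq_S w + s$ strictly because $s \neq 0$. Maximality of $w$ inside $\mathrm{Ap}(S,m)$ then forces $w + s \notin \mathrm{Ap}(S,m)$, and since $w + s \in S$ this is equivalent to $(w + s) - m \in S$, that is, $x + s \in S$. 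Hence $x \in \mathrm{PF}(S)$.

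For the reverse inclusion $\subseteq$, I would take $x \in \mathrm{PF}(S)$ and set $w = x + m$. Applying the pseudo-Frobenius property to $s = m$ yields $w \in S$, while $w - m = x \notin S$ places $w$ in $\mathrm{Ap}(S,m)$. To check maximality, I would suppose some $w' \in \mathrm{Ap}(S,m)$ satisfies $w \leq_S w'$ with $w' \neq w$; then $w' - w$ is a nonzero element of $S$, and the pseudo-Frobenius property applied to this $s = w' - w$ gives $x + (w' - w) = w' - m \in S$, contradicting $w' \in \mathrm{Ap}(S,m)$. Therefore $w$ is maximal and $x = w - m$ lies in the claimed set.

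I do not foresee a genuine obstacle, since the whole argument is a formal manipulation of the three definitions. The only point that needs attention is the repeated use of the characterisation ``$t \in S$ and $t - m \notin S$ if and only if $t \in \mathrm{Ap}(S,m)$'', which is what transports the maximality hypothesis into the membership claim in the forward direction and produces the contradiction in the backward direction.
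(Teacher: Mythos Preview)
Your argument is correct: the double inclusion via $w\mapsto w-m$ works exactly as you describe, and each step is a direct consequence of the definitions of $\mathrm{Ap}(S,m)$, of $\leq_S$, and of pseudo-Frobenius number. Note, however, that the paper does not supply its own proof of this proposition; it merely quotes the result from Proposition~7 of \cite{froberg} (equivalently Proposition~2.20 of \cite{springer}), so there is no in-paper argument to compare against. Your proof is in fact the standard one given in those references.
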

	
	From Proposition~10.21 of \cite{springer}, we have the following result.
	
	\begin{proposition}\label{prop04}
		If $n\geq4$, then $\mathrm{t}(S(n))=2$.
	\end{proposition}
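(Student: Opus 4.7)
The plan is to reduce the claim to Proposition~10.21 of \cite{springer}, which the authors have already flagged as the relevant reference. That proposition asserts that a numerical semigroup minimally generated by three pairwise coprime positive integers has type equal to $2$, so the entire argument boils down to checking its two hypotheses for $S(n)$ when $n \geq 4$.

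The embedding-dimension hypothesis is exactly Lemma~\ref{lem02}: $\mathrm{e}(S(n)) = 3$, so $\{f_n^2, f_{n+1}^2, f_{n+2}^2\}$ is the minimal generating set. The pairwise coprimality was essentially verified in the paragraph preceding Lemma~\ref{lem02}: from the identity $\gcd(f_k, f_{k+1}) = \gcd(f_k, f_{k-1})$ one inducts to obtain $\gcd(f_k, f_{k+1}) = 1$ for every $k \geq 1$, and then $\gcd(f_n, f_{n+2}) = \gcd(f_n, f_n + f_{n+1}) = \gcd(f_n, f_{n+1}) = 1$. Since squaring preserves coprimality, $f_n^2$, $f_{n+1}^2$ and $f_{n+2}^2$ are pairwise coprime, and Proposition~10.21 of \cite{springer} immediately delivers $\mathrm{t}(S(n)) = 2$.

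If one preferred to avoid the citation and argue in a self-contained way, the natural route combines two classical ingredients. First, by Herzog's classification of three-generated symmetric numerical semigroups as complete intersections, such a semigroup is symmetric (that is, has type $1$) only when two of its minimal generators share a non-trivial common factor; pairwise coprimality therefore rules out $\mathrm{t}(S(n)) = 1$. Second, a direct description of the maximal elements of $\mathrm{Ap}(S(n), \mathrm{m}(S(n)))$ under $\leq_{S(n)}$ in embedding dimension three shows that there are at most two such maximal elements, so Proposition~\ref{prop03} caps $\mathrm{t}(S(n))$ above by $2$. The only genuine obstacle in either approach is the structural bookkeeping behind that upper bound; the Fibonacci numbers themselves enter only through their pairwise coprimality, which is already established.
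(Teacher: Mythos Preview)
Your proposal is correct and matches the paper's approach exactly: the paper offers no argument beyond the sentence ``From Proposition~10.21 of \cite{springer}, we have the following result,'' so your reduction to that proposition, together with the verification of its hypotheses via Lemma~\ref{lem02} and the pairwise-coprimality discussion preceding it, is precisely what the authors intend. Your optional self-contained paragraph (Herzog's classification plus the type bound $\mathrm{t}\leq 2$ in embedding dimension three) is a reasonable unpacking of what lies behind the cited proposition, though it goes beyond anything the paper itself supplies.
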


	\section{Algorithmic process by Ram{\'i}rez Alfons{\'i}n and R{\o}dseth}\label{rar}
	
	Let us define the numerical semigroup $S=\langle a,b,c\rangle$, where $a<b<c$ are pairwise relatively prime positive integers and $\mathrm{e}(S)=3$. We set $s_{-1}=a$ and $s_0$ such that $(b-a)s_0 \equiv c \pmod{s_{-1}}$, which simplifies to $bs_0 \equiv c \pmod{a}$.
	
	The following \textit{Algorithm Ap{\'e}ry} from \cite{ram_alf-rod} has been adapted for the purposes of this study.
	
	\begin{algorithm}\label{alg01rar}
		\mbox{} \\
		\textbf{Input:} $a,\, b,\, c,\, s_0$. \\
		\textbf{Output:} $s_v,\, s_{v+1},\, P_v,\, P_{v+1}$. \\
		\textbf{1.} $r_{-1} = a,\, r_0 = s_0$. \\
		\textbf{2.} $r_{i-1} = \kappa_{i+1}r_i +r_{i+1},\, \kappa_{i+1} = \lfloor\frac{r_{i-1}}{r_i}\rfloor$,\, $0 = r_{\mu+1} < r_\mu < \dots < r_{-1}$. \\
		\textbf{3.} $p_{i+1} = \kappa_{i+1}p_i +p_{i-1},\, p_{-1} = 0,\, p_0 = 1$. \\
		\textbf{4.} $T_{i+1} = -\kappa_{i+1}T_i + T_{i-1},\, T_{-1} = b,\, T_0 = \frac{1}{a}(br_0-c)$. \\
		\textbf{5.} IF there is a minimal $u$ such that $T_{2u+2} \leq 0$, THEN
		\[ \begin{pmatrix} s_v & P_v \\ s_{v+1} & P_{v+1} \end{pmatrix} = \begin{pmatrix} \gamma & 1 \\ \gamma_{-1} & 1 \end{pmatrix} \begin{pmatrix} r_{2u+1} & -p_{2u+1} \\ r_{2u+2} & p_{2u+2} \end{pmatrix},\quad
		\gamma = \left\lfloor\frac{-T_{2u+2}}{T_{2u+1}}\right\rfloor+1, \]
		\textbf{6.} ELSE $s_v = r_\mu,\, s_{v+1} = 0,\, P_v = p_\mu,\, P_{v+1} = p_{\mu+1}$.
	\end{algorithm}
	
	Consider the sets
	\[ A = \{ (y,z) \in \mathbb{Z}^2 \mid 0 \leq y < s_v-s_{v+1},\, 0 \leq z < P_{v+1} \} \]
	and
	\[ B = \{ (y,z) \in \mathbb{Z}^2 \mid 0 \leq y < s_v,\, 0 \leq z < P_{v+1}-P_v \}. \]
	
	\begin{remark} 
		Algorithm Ap{\'e}ry is, as the authors mention in \cite{ram_alf-rod}, a result of adapting Greenberg's ideas (\cite{greenberg}) to an earlier algorithm proposed by R{\o}dseth (\cite{rodseth}). Consequently, Algorithm~\ref{alg01rar} can be considered a rewriting of the algorithm given by Greenberg for numerical semigroups with embedding dimension equal to three.
	\end{remark}
	
	\begin{theorem}\label{thm02rar}
		Let $S$ be the numerical semigroup $S=\langle a,b,c\rangle$, with $a<b<c$ pairwise relatively prime positive integers, such that $\mathrm{e}(S)=3$. Then
		\[ \mathrm{Ap}(S, a) = \{ by+cz \mid (y,z) \in A \cup B \}. \]
		Moreover, $\mathrm{F}(S) = \max\{bs_v+c(P_{v+1}-P_v), b(s_v-s_{v+1})+cP_{v+1} \}-a-b-c$.
	\end{theorem}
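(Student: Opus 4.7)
The plan is to prove the Apéry set description first and then read off the Frobenius number via Proposition~\ref{prop01}. That proposition tells us $|\mathrm{Ap}(S,a)| = a$ and that its elements are the least members of $S$ in each residue class modulo $a$. So the parametrisation $\{by+cz \mid (y,z) \in A\cup B\}$ has to satisfy: (i) every element lies in $S$, which is immediate since $y,z \geq 0$; (ii) the set has exactly $a$ elements; (iii) it realises each residue class modulo $a$ exactly once; (iv) each representative is minimal in its residue class.

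The geometric content of Algorithm~\ref{alg01rar} is the key to (ii)--(iv). The Euclidean recursions in steps~1--3 are the standard continued-fraction expansion of $a/s_0$, producing the unimodular identity $p_{i-1}r_i - p_i r_{i-1} = (-1)^{i+1} a$. Combined with the defining congruence $bs_0 \equiv c \pmod{a}$, this says that each vector $(r_i, \mp p_i)$ satisfies $b r_i \pm c p_i \equiv 0 \pmod{a}$, and so is a lattice direction along which $(y,z)$ can be shifted without altering the residue of $by+cz$. The auxiliary sequence $T_i$ in step~4 tracks the sign of $b r_i - c p_i$ suitably normalised; the stopping condition $T_{2u+2}\leq 0$ isolates the first step at which these shifts cease to decrease $by+cz$, and the matrix product in step~5, tuned by $\gamma$, assembles the two primitive vectors $(s_v, -P_v)$ and $(s_{v+1}, P_{v+1})$ that span the fundamental parallelogram of the residue lattice in the first quadrant. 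A direct determinant check yields $s_v P_{v+1} - s_{v+1} P_v = a$, while inclusion-exclusion gives $|A \cup B| = (s_v - s_{v+1}) P_{v+1} + s_v (P_{v+1}-P_v) - (s_v - s_{v+1})(P_{v+1}-P_v) = s_v P_{v+1} - s_{v+1} P_v = a$, securing (ii). For (iii) and (iv), any $(y,z)$ outside $A\cup B$ can be translated into $A\cup B$ by a positive integer combination of $(s_v, -P_v)$ and $(s_{v+1}, P_{v+1})$ that strictly decreases $by+cz$ while preserving its residue modulo $a$; this is the standard lattice-reduction step, and it shows simultaneously that the fundamental region $A\cup B$ hits every class and that its elements are the minimal representatives.

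With the Apéry set in hand, Proposition~\ref{prop01} gives $\mathrm{F}(S) = \max \mathrm{Ap}(S,a) - a$. The function $(y,z)\mapsto by+cz$ is linear with positive coefficients, so on each axis-aligned rectangle its maximum is attained at the upper-right corner: $b(s_v-s_{v+1}-1)+c(P_{v+1}-1)$ on $A$ and $b(s_v-1)+c(P_{v+1}-P_v-1)$ on $B$. Taking the larger of the two values and subtracting $a$ yields exactly $\max\{bs_v+c(P_{v+1}-P_v),\, b(s_v-s_{v+1})+cP_{v+1}\} - a - b - c$, as claimed. The main obstacle is the middle paragraph: certifying that Algorithm~\ref{alg01rar} really outputs the two extremal reduction vectors of the residue lattice, and that no lattice point outside $A\cup B$ can be minimal in its class, demands a careful induction along the continued-fraction expansion together with a case analysis on the parity $2u+2$. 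This is Greenberg's argument \cite{greenberg} specialised to embedding dimension three, as carried out in \cite{ram_alf-rod}; consequently the cleanest route here is to invoke that result rather than reprove it from scratch.
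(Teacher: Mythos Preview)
The paper does not prove Theorem~\ref{thm02rar} at all: it is stated without proof as a result taken from \cite{ram_alf-rod} (and ultimately from \cite{greenberg,rodseth}), so there is no ``paper's own proof'' to compare against. Your proposal goes further than the paper by sketching the underlying mechanism---the continued-fraction lattice reduction, the determinant identity $s_vP_{v+1}-s_{v+1}P_v=a$, the inclusion--exclusion count $|A\cup B|=a$, and the extraction of $\mathrm{F}(S)$ from the two rectangle corners---and then, like the paper, deferring the hard step (that the algorithm really produces a fundamental domain of minimal representatives) to the cited references. The computations you display for $|A\cup B|$ and for the Frobenius formula are correct; your final sentence, acknowledging that the core lattice-reduction argument is Greenberg's and is best invoked rather than reproved, matches exactly how the paper treats this theorem.
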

	
	\begin{remark}\label{rem03rar}
		More precisely, $\mathrm{PF}(S) = \{bs_v+c(P_{v+1}-P_v)-a-b-c, b(s_v-s_{v+1})+cP_{v+1}-a-b-c \}$.
	\end{remark}
	
	Next, we will apply Algorithm~\ref{alg01rar} to the numerical semigroups $S(n)=\langle f_n^2, f_{n+1}^2, f_{n+2}^2 \rangle$ with $n\geq 4$ (and $f_n$, $f_{n+1}$, $f_{n+2}$ three consecutive Fibonacci numbers).
	
	\begin{lemma}\label{lem04rar}
		We have that
		\[ s_0=\begin{cases} f_nf_{n-3}+1, \;\; \mbox{if $n$ is even,} \\ 2f_nf_{n-2}+1, \;\; \mbox{if $n$ is odd,}	\end{cases} \quad
		T_0=\begin{cases} f_nf_{n-1}-2, \;\; \mbox{if $n$ is even,} \\ 2f_n^2-3, \;\; \mbox{if $n$ is odd.} \end{cases} \]
	\end{lemma}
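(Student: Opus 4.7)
The plan is to solve the defining congruence $f_{n+1}^2 s_0 \equiv f_{n+2}^2 \pmod{f_n^2}$ for the unique $s_0 \in [0, f_n^2)$, and then substitute into $T_0 = (f_{n+1}^2 s_0 - f_{n+2}^2)/f_n^2$. Expanding $f_{n+2}^2 = (f_n + f_{n+1})^2$ modulo $f_n^2$ rewrites the congruence as $f_{n+1}^2 (s_0 - 1) \equiv 2 f_n f_{n+1} \pmod{f_n^2}$. Since $\gcd(f_{n+1}, f_n) = 1$, cancelling one factor of $f_{n+1}$ forces $f_n \mid s_0 - 1$, so I write $s_0 = m f_n + 1$, which reduces the problem to the single linear congruence $m f_{n+1} \equiv 2 \pmod{f_n}$ with $m \in [0, f_n)$.

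To solve it I would use Cassini's identity $f_{n-1} f_{n+1} - f_n^2 = (-1)^n$, which yields $f_{n+1}^{-1} \equiv (-1)^n f_{n-1} \pmod{f_n}$ and hence $m \equiv 2(-1)^n f_{n-1} \pmod{f_n}$. If $n$ is even, $2 f_{n-1} - f_n = f_{n-1} - f_{n-2} = f_{n-3}$, with $0 \le f_{n-3} < f_n$ for $n \ge 4$, so $m = f_{n-3}$ and $s_0 = f_n f_{n-3} + 1$. If $n$ is odd, $-2 f_{n-1} + 2 f_n = 2 f_{n-2}$, and $2 f_{n-2} < f_n$ since $f_{n-2} < f_{n-1}$, so $m = 2 f_{n-2}$ and $s_0 = 2 f_n f_{n-2} + 1$.

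For $T_0$ I would first note that $f_{n+2}^2 - f_{n+1}^2 = (f_{n+2} - f_{n+1})(f_{n+2} + f_{n+1}) = f_n f_{n+3}$, so
\[
T_0 \;=\; \frac{f_{n+1}^2 (s_0 - 1) - f_n f_{n+3}}{f_n^2} \;=\; \frac{f_{n+1}^2\, m - f_{n+3}}{f_n},
\]
with $m$ as above. Then I would switch to the basis $F = f_n$, $G = f_{n+1}$, using $f_{n-1} = G - F$, $f_{n-2} = 2F - G$, $f_{n-3} = 2G - 3F$, $f_{n+3} = F + 2G$, and apply Cassini in the form $G^2 = F^2 + FG + (-1)^n$ to eliminate $G^2$ and $G^3$ from the numerator. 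In the even case this collapses $f_{n+1}^2 f_{n-3} - f_{n+3}$ to $F^2(G - F) - 2F = f_n^2 f_{n-1} - 2 f_n$, giving $T_0 = f_n f_{n-1} - 2$; in the odd case $2 f_{n+1}^2 f_{n-2} - f_{n+3}$ collapses to $F(2F^2 - 3)$, giving $T_0 = 2 f_n^2 - 3$.

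The main obstacle is the final polynomial simplification in $F$ and $G$: one must carefully track the signs of $(-1)^n$ and cleanly reduce $FG^2$ and $G^3$ via Cassini so as to see that $f_n$ divides the numerator with the correct quotient. The determination of $s_0$ is comparatively clean and rests entirely on the coprimality $\gcd(f_n, f_{n+1}) = 1$ together with Cassini's identity to invert $f_{n+1}$ modulo $f_n$.
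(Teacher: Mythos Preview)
Your argument is correct and takes a genuinely different route from the paper. The paper simply \emph{verifies} the claimed values: for each parity it plugs the proposed $s_0$ into $bs_0-c=f_{n+1}^2 s_0 - f_{n+2}^2$ and, via a chain of Fibonacci recurrences, rewrites this expression as $f_n^2$ times the claimed $T_0$, then checks $s_0<f_n^2$. Your approach is instead \emph{constructive}: you reduce the defining congruence modulo $f_n^2$, cancel a factor $f_{n+1}$ using $\gcd(f_n,f_{n+1})=1$, deduce $s_0=mf_n+1$, and solve $mf_{n+1}\equiv 2\pmod{f_n}$ via Cassini's identity, which produces $m=f_{n-3}$ or $m=2f_{n-2}$ without any guesswork; you then recover $T_0$ by the substitution $T_0=(f_{n+1}^2 m-f_{n+3})/f_n$ and a systematic reduction in the variables $F=f_n$, $G=f_{n+1}$ with the single relation $G^2=F^2+FG+(-1)^n$. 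The paper's proof is shorter once the answer is in hand, but yours explains where the formulas come from and would generalise more readily to analogous families; the $F,G$ bookkeeping you flag as the main obstacle does work out exactly as you describe.
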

	
	\begin{proof}
		Let us suppose that $n$ is even. Then
		\begin{align*}
			bs_0-c & = f_{n+1}^2(f_nf_{n-3}+1) - f_{n+2}^2 \\
			& = f_n(f_{n+1}^2f_{n-3} - f_{n+3}) \\
			& = f_n^2(f_nf_{n-3} + 2f_{n-1}f_{n-3}) + f_n(f_{n-1}^2f_{n-3} - f_{n+3})  \\
			& = f_n^2(f_nf_{n-3} + 2f_{n-1}f_{n-3}) + f_n(f_{n-1}^2f_{n-3} - 3f_n - 2f_{n-1}) \\
			& = f_n^2(f_nf_{n-3} + 2f_{n-1}f_{n-3} - 3) + f_nf_{n-1}(f_{n-1}f_{n-3} - 2) \\
			& = f_n^2(f_nf_{n-3} + 2f_{n-1}f_{n-3} - 3 + f_{n-1}f_{n-4}) \\
			& = f_n^2(f_nf_{n-1} - 2) \\
			& = a(f_nf_{n-1} - 2).
		\end{align*}
		Since $f_nf_{n-3}+1<f_n^2$, the result follows.
		
		Now let us assume that $n$ is odd. Then
		\begin{align*}
			bs_0-c & = f_{n+1}^2(2f_nf_{n-2}+1) - f_{n+2}^2 \\
			& = f_n(2f_{n+1}^2f_{n-2} - f_{n+3}) \\
			& = f_n^2(2f_nf_{n-2} + 4f_{n-1}f_{n-2}) + f_n(2f_{n-1}^2f_{n-2} - f_{n+3}) \\
			& = f_n^2(2f_nf_{n-2} + 4f_{n-1}f_{n-2}) + f_n(2f_{n-1}^2f_{n-2} - 3f_n - 2f_{n-1}) \\
			& = f_n^2(2f_nf_{n-2} + 4f_{n-1}f_{n-2} - 3) + 2f_nf_{n-1}(f_{n-1}f_{n-2} - 1) \\
			& = f_n^2(2f_nf_{n-2} + 4f_{n-1}f_{n-2} - 3 + 2f_{n-1}f_{n-3}) \\
			& = f_n^2(2f_n^2 - 3) \\
			& = a(2f_n^2 - 3).
		\end{align*}
		Since $2f_nf_{n-2}+1<f_n^2$, the result is obtained.
	\end{proof}
	
	\begin{claim}\label{claim05rar}
		Let $n\geq4$.
		\begin{enumerate}
			\item If $n=6n_0+4$, with $n_0\in\mathbb{N}$, then
			\begin{itemize}
				\item $\mu = 4n_0+1$,
				\item $\kappa_1=\dots=\kappa_{2n_0}=4$, $\kappa_{2n_0+1}=2$, $\kappa_{2n_0+2}=\dots=\kappa_{4n_0+2}=4$,
				\item $p_{\mu+1}=f_n^2$,
				\item $T_{\mu+1}=-f_{n+2}^2$,
				\item $u=n_0$,
				\item $T_{2u+1}=\frac{f_{n+5}}{2}$, $T_{2u+2}=-\frac{3f_{n+5}+2f_{n+3}}{2}$,
				\item $\begin{pmatrix} r_{2u+1} & -p_{2u+1} \\ r_{2u+2} & p_{2u+2} \end{pmatrix} =
				\begin{pmatrix} \frac{f_{n-1}}{2} & -\frac{f_n+f_{n-2}}{2} \\ \frac{f_{n-4}}{2} & \frac{5f_n+3f_{n-2}}{2} \end{pmatrix}$.
			\end{itemize}
			
			\item If $n=6n_0+5$, with $n_0\in\mathbb{N}$, then
			\begin{itemize}
				\item $\mu = 4n_0+2$,
				\item $\kappa_1=1$, $\kappa_2=5$, $\kappa_3=4$ ($n_0=0$),
				\item $\kappa_1=1$, $\kappa_2=3$, $\kappa_3=\dots=\kappa_{2n_0+1}=4$, $\kappa_{2n_0+2}=4$, $\kappa_{2n_0+3}=\dots=\kappa_{4n_0+3}=4$ ($n_0\geq1$),
				\item $p_{\mu+1}=f_n^2$,
				\item $T_{\mu+1}=-f_{n+2}^2$,
				\item $u=n_0$,
				\item $T_{2u+1}=\frac{f_{n+4}}{2}$, $T_{2u+2}=-\frac{f_{n+5}+f_{n+3}}{2}$,
				\item $\begin{pmatrix} r_{2u+1} & -p_{2u+1} \\ r_{2u+2} & p_{2u+2} \end{pmatrix} =
				\begin{pmatrix} \frac{f_{n+1}}{2} & -\frac{f_{n-2}}{2} \\ \frac{f_{n-2}}{2} & \frac{2f_{n-1}+3f_{n-2}}{2} \end{pmatrix}$.
			\end{itemize}
			 
			\item If $n=6n_0+6$, with $n_0\in\mathbb{N}$, then
			\begin{itemize}
				\item $\mu = 4n_0+3$,
				\item $\kappa_1=\dots=\kappa_{2n_0}=4$, $\kappa_{2n_0+1}=3$, $\kappa_{2n_0+2}=1$, $\kappa_{2n_0+3}=3$, $\kappa_{2n_0+4}=\dots=\kappa_{4n_0+4}=4$,
				\item $p_{\mu+1}=f_n^2$,
				\item $T_{\mu+1}=-f_{n+2}^2$,
				\item $u=n_0$,
				\item $T_{2u+1}=f_{n+4}$, $T_{2u+2}=-\frac{f_{n+3}}{2}$,
				\item $\begin{pmatrix} r_{2u+1} & -p_{2u+1} \\ r_{2u+2} & p_{2u+2} \end{pmatrix} =
				\begin{pmatrix} f_{n+1} & -f_{n-2} \\ \frac{f_n}{2} & \frac{f_n}{2} \end{pmatrix}$.
			\end{itemize}
			
			\item If $n=6n_0+1$, with $n_0\in\mathbb{N}\setminus\{0\}$, then
			\begin{itemize}
				\item $\mu = 4n_0$,
				\item $\kappa_1=1$, $\kappa_2=3$, $\kappa_3=\dots=\kappa_{2n_0}=4$, $\kappa_{2n_0+1}=2$, $\kappa_{2n_0+2}=\dots=\kappa_{4n_0+1}=4$,
				\item $p_{\mu+1}=f_n^2$,
				\item $T_{\mu+1}=-f_{n+2}^2$,
				\item $u=n_0$,
				\item $T_{2u+1}=\frac{f_{n+5}}{2}$, $T_{2u+2}=-\frac{3f_{n+5}+2f_{n+3}}{2}$,
				\item $\begin{pmatrix} r_{2u+1} & -p_{2u+1} \\ r_{2u+2} & p_{2u+2} \end{pmatrix} =
				\begin{pmatrix} \frac{f_{n-1}}{2} & -\frac{f_n+f_{n-2}}{2} \\ \frac{f_{n-4}}{2} & \frac{5f_n+3f_{n-2}}{2} \end{pmatrix}$.
			\end{itemize}
			
			\item If $n=6n_0+8$, with $n_0\in\mathbb{N}$, then
			\begin{itemize}
				\item $\mu = 4n_0+3$,
				\item $\kappa_1=\dots=\kappa_{2n_0+1}=4$, $\kappa_{2n_0+2}=6$, $\kappa_{2n_0+3}=\dots=\kappa_{4n_0+4}=4$,
				\item $p_{\mu+1}=f_n^2$,
				\item $T_{\mu+1}=-f_{n+2}^2$,
				\item $u=n_0$,
				\item $T_{2u+1}=\frac{f_{n+4}}{2}$, $T_{2u+2}=-\frac{f_{n+6}-f_{n+2}}{2}$,
				\item $\begin{pmatrix} r_{2u+1} & -p_{2u+1} \\ r_{2u+2} & p_{2u+2} \end{pmatrix} =
				\begin{pmatrix} \frac{f_{n+1}}{2} & -\frac{f_{n-2}}{2} \\ \frac{f_{n-2}}{2} & \frac{2f_{n-1}+3f_{n-2}}{2} \end{pmatrix}$.
			\end{itemize} 
			
			\item If $n=6n_0+3$, with $n_0\in\mathbb{N}\setminus\{0\}$, then
			\begin{itemize}
				\item $\mu = 4n_0+2$,
				\item $\kappa_1=1$, $\kappa_2=3$, $\kappa_3=\dots=\kappa_{2n_0}=4$, $\kappa_{2n_0+1}=3$, $\kappa_{2n_0+2}=2$, $\kappa_{2n_0+3}=3$, $\kappa_{2n_0+4}=\dots=\kappa_{4n_0+3}=4$,
				\item $p_{\mu+1}=f_n^2$,
				\item $T_{\mu+1}=-f_{n+2}^2$,
				\item $u=n_0$,
				\item $T_{2u+1}=f_{n+4}$, $T_{2u+2}=-\frac{f_{n+3}}{2}$,
				\item $\begin{pmatrix} r_{2u+1} & -p_{2u+1} \\ r_{2u+2} & p_{2u+2} \end{pmatrix} =
				\begin{pmatrix} f_{n+1} & -f_{n-2} \\ \frac{f_n}{2} & \frac{f_n}{2} \end{pmatrix}$.
			\end{itemize}
		\end{enumerate}
	\end{claim}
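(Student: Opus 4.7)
The plan is to prove the claim by induction on $n_0$ in each of the six residue classes $n \bmod 6$ separately. Since Algorithm~\ref{alg01rar} is driven by the Euclidean recurrence $r_{i-1} = \kappa_{i+1} r_i + r_{i+1}$, which amounts to the continued fraction expansion of $f_n^2/s_0$, together with the linear recurrences for $p_i$ and $T_i$, everything reduces to controlling these three sequences at every step.

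I would first dispose of the base cases ($n_0=0$ in parts (1), (2), (3), (5) and $n_0=1$ in parts (4), (6)) by direct computation from Lemma~\ref{lem04rar}. For the inductive step, the crucial identity is the six-step Fibonacci self-similarity $f_n = 4 f_{n-3} + f_{n-6}$. In the even case ($s_0(n) = f_n f_{n-3} + 1$) this yields $f_n^2 - 4 s_0(n) = f_n f_{n-6} - 4$ and then $s_0(n) - 4(f_n f_{n-6} - 4) = f_n f_{n-9} + 17$, so $\kappa_1 = \kappa_2 = 4$ for all sufficiently large $n$; the odd case uses $s_0(n) = 2 f_n f_{n-2} + 1$ analogously. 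Iterating shows that the first $2n_0$ quotients (or the corresponding prefix length prescribed in the claim) all equal $4$, and that the pair $(r_{2k-1}, r_{2k})$ after $2k$ such steps admits a uniform closed form in Fibonacci numbers. The ``central anomaly'' in each case then corresponds to the step at which the recursion $f_m = 4 f_{m-3} + f_{m-6}$ breaks down because $m$ is too small; these exceptional steps encode the case-dependent central block of $\kappa$'s (a lone $2$, a lone $6$, or the blocks $(3,1,3)$ and $(3,2,3)$).

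The parallel inductions for $p_i$ and $T_i$ become straightforward once the $\kappa$-pattern is under control: both obey linear recurrences, and the claimed closed forms for $p_{2u+1}, p_{2u+2}, T_{2u+1}, T_{2u+2}$ can be verified by standard Fibonacci identities such as Cassini's $f_{n+1} f_{n-1} - f_n^2 = (-1)^n$ and d'Ocagne's $f_{m+n+1} = f_{m+1} f_{n+1} + f_m f_n$. The identity $p_{\mu+1} = f_n^2$ follows from the general fact that $p_{\mu+1} = a/\gcd(a, r_0)$ together with $\gcd(f_n^2, s_0(n)) = 1$, which in turn follows from the relation $b s_0 - c = a T_0$ of Lemma~\ref{lem04rar} and $\gcd(a, c) = 1$. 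The identity $T_{\mu+1} = -f_{n+2}^2 = -c$ follows from the duality between $p_i$ and $T_i$ built into R{\o}dseth's algorithm. Finally, the stopping index $u = n_0$ is identified as the smallest index with $T_{2u+2} \leq 0$, and the displayed matrix is assembled from the already-computed entries.

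The main obstacle is precisely why the authors present this result as a ``Claim'': the six cases have distinct central-block patterns, and arranging a clean uniform induction that handles all of them simultaneously is combinatorially awkward. A rigorous version would propose, for each residue class, explicit closed-form candidates $\widetilde r_i, \widetilde p_i, \widetilde T_i$ in terms of Fibonacci numbers for \emph{all} $i \in \{-1, 0, \dots, \mu+1\}$ and verify that they satisfy the three recurrences with the prescribed $\kappa$-pattern; this reduces the problem to a finite list of Fibonacci identities in each residue class, which is the rigorous counterpart of the numerical verification the authors appear to have carried out.
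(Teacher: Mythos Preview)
The paper does not prove this statement. It is deliberately labeled a ``Claim'', and the authors say explicitly in the Introduction that ``several results are presented as `Claim' because we have not been able to provide rigorous proofs for them'', and in the Conclusions that proving the claims of Section~\ref{rar} is left as future work, for which they ``suspect this will require using continued fractions and difference equations''. So there is no proof in the paper to compare your proposal against; the only support offered there is computational evidence together with agreement with the independently proved results of Section~\ref{rgs}.

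Your outline is precisely the approach the authors anticipate: the Euclidean recursion for the $r_i$ is the continued-fraction expansion of $f_n^2/s_0$, and your key identity $f_n = 4f_{n-3} + f_{n-6}$ (hence $f_n^2 - 4s_0(n) = f_n f_{n-6} - 4$ and $s_0(n) - 4(f_n f_{n-6}-4) = f_n f_{n-9} + 17$ in the even case) correctly explains why the partial quotients are eventually constant equal to $4$ on either side of a short central block. The structural observations $p_{\mu+1}=a$ and $T_{\mu+1}=-c$ are also standard features of R{\o}dseth's construction. That said, what you have written is a plan rather than a proof: the promised closed forms $\widetilde r_i, \widetilde p_i, \widetilde T_i$ valid for every index $i$ are never exhibited, the ``finite list of Fibonacci identities'' is never produced, and none of the six inductions is actually carried out. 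You acknowledge this yourself, and it is exactly the gap the authors were unable to close.
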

	
	\begin{corollary}\label{cor06rar}
		For $n\geq 4$, let $S(n)$ be the numerical semigroup generated by the squares of the three consecutive Fibonacci numbers $f_n$, $f_{n+1}$, $f_{n+2}$.
		\begin{enumerate}
			\item If $n=6n_0+4$ or $n=6n_0+7$ with $n_0\in\mathbb{N}$ (equivalently, if $n=3k_0+1$ with $k_0\in\mathbb{N}\setminus\{0\}$), then
			\begin{itemize}
				\item $\gamma=4$,
				\item $\begin{pmatrix} s_v & P_v \\ s_{v+1} & P_{v+1} \end{pmatrix} =
				\begin{pmatrix} \frac{f_{n+2}}{2} & \frac{f_{n-1}}{2} \\ f_n & f_n \end{pmatrix}$,
				\item $\mathrm{PF}(S(n)) = \left\{ \left(\frac{f_{n+2}}{2}-1\right)f_{n+1}^2 + \left(\frac{f_{n-2}+f_n}{2}-1\right)f_{n+2}^2, \left( \frac{f_{n-1}}{2}-1\right)f_{n+1}^2 + (f_n-1)f_{n+2}^2 \right\} - f_n^2$.
			\end{itemize}
			
			\item If $n=6n_0+5$ or $n=6n_0+8$ with $n_0\in\mathbb{N}$ (equivalently, if $n=3k_0+2$ with $k_0\in\mathbb{N}\setminus\{0\}$), then
			\begin{itemize}
				\item $\gamma=3$,
				\item $\begin{pmatrix} s_v & P_v \\ s_{v+1} & P_{v+1} \end{pmatrix} =
				\begin{pmatrix} f_{n+2} & f_{n-1} \\ \frac{f_n+f_{n+2}}{2} & \frac{f_{n+1}}{2} \end{pmatrix}$,
				\item $\mathrm{PF}(S(n)) = \left\{ (f_{n+2}-1)f_{n+1}^2 + \left(\frac{f_{n-2}}{2}-1\right)f_{n+2}^2, \left(\frac{f_{n+1}}{2}-1\right)f_{n+1}^2 + \left(\frac{f_{n+1}}{2}-1\right)f_{n+2}^2 \right\} - f_n^2$.
			\end{itemize}
						
			\item If $n=6n_0+6$ or $n=6n_0+9$, with $n_0\in\mathbb{N}$ (equivalently, if $n=3k_0+3$ with $k_0\in\mathbb{N}\setminus\{0\}$), then
			\begin{itemize}
				\item $\gamma=1$,
				\item $\begin{pmatrix} s_v & P_v \\ s_{v+1} & P_{v+1} \end{pmatrix} =
				\begin{pmatrix} \frac{f_{n+3}}{2} & \frac{f_{n-3}}{2} \\ \frac{f_n}{2} & \frac{f_n}{2} \end{pmatrix}$,
				\item $\mathrm{PF}(S(n)) = \left\{ \left(\frac{f_{n+3}}{2}-1\right)f_{n+1}^2 + (f_{n-2}-1)f_{n+2}^2, (f_{n+1}-1)f_{n+1}^2 + \left(\frac{f_n}{2}-1\right)f_{n+2}^2 \right\} - f_n^2$.
			\end{itemize}
		\end{enumerate}
		Moreover, in any case, $\mathrm{F}(S(n)) = \max(\mathrm{PF}(S(n)))$.
	\end{corollary}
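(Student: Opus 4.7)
The plan is to derive Corollary~\ref{cor06rar} directly from Claim~\ref{claim05rar}, by executing Step~5 of Algorithm~\ref{alg01rar} and then invoking Theorem~\ref{thm02rar} together with Remark~\ref{rem03rar}. The six cases of Claim~\ref{claim05rar} group naturally by $n\bmod 3$: cases~(1) and~(4) give $n=3k_0+1$; cases~(2) and~(5) give $n=3k_0+2$; cases~(3) and~(6) give $n=3k_0+3$. Within each residue class the two subcases produce identical values for $T_{2u+1}$, $T_{2u+2}$, $r_{2u+1}$, $r_{2u+2}$, $p_{2u+1}$, $p_{2u+2}$ (only $\mu$ and the $\kappa_i$ differ), so the three-case presentation of the corollary is justified.

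The first computation in each class is $\gamma=\lfloor -T_{2u+2}/T_{2u+1}\rfloor+1$. For $n\equiv 1\pmod 3$ one has $-T_{2u+2}/T_{2u+1}=3+2f_{n+3}/f_{n+5}$, and since $f_{n+5}=f_{n+4}+f_{n+3}>2f_{n+3}$ the fractional term lies in $(0,1)$, giving $\gamma=4$. For $n\equiv 2\pmod 3$, $-T_{2u+2}/T_{2u+1}=1+2f_{n+3}/f_{n+4}$; since $f_{n+4}<2f_{n+3}<2f_{n+4}$ the integer part is $2$, so $\gamma=3$. For $n\equiv 0\pmod 3$, $-T_{2u+2}/T_{2u+1}=f_{n+3}/(2f_{n+4})<1$, so $\gamma=1$.

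The second step is to multiply out the $2\times 2$ matrix in Step~5 of Algorithm~\ref{alg01rar}. All entries of the resulting $(s_v,P_v;s_{v+1},P_{v+1})$ reduce to short Fibonacci identities such as $f_{n+3}=2f_{n+1}+f_n$, $f_{n+2}=2f_n+f_{n-1}$, $f_n=2f_{n-2}+f_{n-3}$, and $2f_n=f_{n+1}+f_{n-2}$, each a direct iterate of $f_k=f_{k-1}+f_{k-2}$. Integrality of the halved entries $f_{n+2}/2$, $f_{n-1}/2$ and so on follows from the standard fact that $f_k$ is even precisely when $3\mid k$, which matches the residue of $n$ in each case. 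Substituting $(a,b,c)=(f_n^2,f_{n+1}^2,f_{n+2}^2)$ into Remark~\ref{rem03rar} and rewriting the resulting expressions via the identities $f_n-f_{n-1}/2=(f_n+f_{n-2})/2$, $f_{n+2}/2-f_n=f_{n-1}/2$, and their analogues in the remaining two classes, yields the listed pseudo-Frobenius pairs.

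The last assertion $\mathrm{F}(S(n))=\max(\mathrm{PF}(S(n)))$ is a general property of numerical semigroups: the Frobenius number itself is always a pseudo-Frobenius number, because $\mathrm{F}(S)+s>\mathrm{F}(S)$ lies in $S$ for every $s\in S\setminus\{0\}$; conversely any $x\in\mathrm{PF}(S)$ is by definition in $\mathbb{Z}\setminus S$, hence at most $\mathrm{F}(S)$. The main obstacle is therefore purely bookkeeping: granting Claim~\ref{claim05rar}, every step is a mechanical consequence of the Fibonacci recurrence, but the simplifications must be carried out cleanly in all three residue classes.
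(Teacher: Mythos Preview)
Your proposal is correct and follows exactly the route the paper intends: Corollary~\ref{cor06rar} is stated without proof in the paper because it is meant to be the mechanical output of Step~5 of Algorithm~\ref{alg01rar} applied to the data of Claim~\ref{claim05rar}, followed by Remark~\ref{rem03rar}. Your verification of $\gamma$ in each residue class, the matrix products, and the rewriting of the pseudo-Frobenius expressions are all sound; the one small point you glossed over---that cases~(2) and~(5) of Claim~\ref{claim05rar} list $T_{2u+2}$ in different forms---is harmless since $f_{n+6}-f_{n+2}=f_{n+5}+f_{n+3}$.
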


	\section{Algorithmic process by Tripathi}\label{tri}
	
	Although \cite{tripathi} discusses ``formulae for the Frobenius number in three variables'', we opt for the term algorithmic process. This is because deriving the coefficients for Tripathi's proposed formulas necessitates the use of computationally non-trivial preceding algorithms, as demonstrated in Subsection~\ref{tri-app}.
	
	\subsection{Preliminaries}\label{tri-pre}
	
	Once again, let $S$ be the numerical semigroup generated by the pairwise relatively prime positive integers $a<b<c$ such that $\mathrm{e}(S)=3$.
	
	We define $\kappa:=\left\lfloor \frac{c}{b} \right\rfloor$ and $\ell :\equiv cb^{-1} \pmod{a}$ (where $\ell$ is the smallest non-negative integer satisfying the congruence).
	
	From the proof of Lemma~2 in \cite{tripathi}, we have the following result.
	
	\begin{lemma}\label{lem01tri}
		Let $a<b<c$ be pairwise relatively prime positive integers such that $S=\langle a,b,c\rangle$ is a numerical semigroup with $\mathrm{e}(S)=3$. Then $\ell > \kappa$.
	\end{lemma}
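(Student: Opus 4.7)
The plan is to argue by contradiction, showing that $\ell\le\kappa$ would force $c$ into the submonoid $\langle a,b\rangle$, violating the hypothesis $\mathrm{e}(S)=3$.

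First I would unpack the two definitions. By definition, $\ell$ is the unique integer with $0\le\ell<a$ satisfying $b\ell\equiv c\pmod{a}$, so there exists a unique integer $q$ (possibly negative) with $c=b\ell+aq$. Likewise, $\kappa=\lfloor c/b\rfloor$ satisfies $b\kappa\le c<b(\kappa+1)$.

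Now I would assume, aiming for a contradiction, that $\ell\le\kappa$. Then $b\ell\le b\kappa\le c$, so the integer $q=(c-b\ell)/a$ is non-negative. This is precisely the configuration I want: the equation
\[
c=a\,q+b\,\ell,\qquad q\ge 0,\ \ell\ge 0,
\]
exhibits $c$ as a non-negative $\mathbb{Z}$-linear combination of $a$ and $b$, i.e.\ $c\in\langle a,b\rangle$. Hence $\{a,b,c\}$ would not be a minimal system of generators of $S$, contradicting $\mathrm{e}(S)=3$. (To be thorough, I would also rule out the boundary cases: $\ell=0=q$ is impossible since $c>0$; $\ell=0,\,q\ge 1$ gives $c\in\langle a\rangle$; and $q=0,\,\ell\ge 1$ gives $c\in\langle b\rangle$; all contradict minimality.)

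There is no real obstacle here beyond bookkeeping of the boundary cases $\ell=0$ and $q=0$; the main step is simply the observation that $\ell\le\kappa$ turns the congruence $b\ell\equiv c\pmod{a}$ into a non-negative representation of $c$ in terms of $a$ and $b$.
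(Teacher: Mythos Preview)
Your argument is correct: assuming $\ell\le\kappa$ forces $c-b\ell\ge 0$, and since $a\mid c-b\ell$ this yields $c=aq+b\ell$ with $q,\ell\ge 0$, so $c\in\langle a,b\rangle$, contradicting $\mathrm{e}(S)=3$. The paper does not supply its own proof of this lemma but simply attributes it to the proof of Lemma~2 in \cite{tripathi}; the argument there is exactly the one you give, so your approach matches. (Your separate treatment of the boundary cases $\ell=0$ or $q=0$ is unnecessary, since $c=aq+b\ell$ with $q,\ell\in\mathbb{N}$ already places $c$ in $\langle a,b\rangle$ regardless of whether one of them vanishes.)
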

	
	We define $q:=\left\lfloor \frac{a}{a-\ell} \right\rfloor$, $r:=a-q(a-\ell)$, $\overline{q}:=\left\lfloor \frac{a}{\ell} \right\rfloor$, and $\overline{r}:=a-\overline{q}\ell$, (with $q,\overline{q}\geq1$, $0\leq r<a-\ell$, and $0\leq \overline{r}<\ell$).
	
	The following results are Theorem~3 and Theorem~4 from \cite{tripathi}, respectively.
	
	\begin{theorem}\label{thm02tri3}
		If $\ell > \kappa$ and $br<cq$, then
		\[ \mathrm{F}(\langle a,b,c \rangle) = \begin{cases} b\{ (\lambda+1)(a-\ell)+r-1 \}-a, \mbox{ if } \lambda \geq \frac{c(q-1)-br}{b(a-\ell)+c}, \\[3pt]
			b(a-\ell-1) + c(q-\lambda-1)-a, \mbox{ if } \lambda \leq \frac{c(q-1)-br}{b(a-\ell)+c}, \end{cases} \]
		where $\lambda:= \left\lfloor \frac{c(q-1)-br}{b(a-\ell)+c} \right\rfloor$.
	\end{theorem}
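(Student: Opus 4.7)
The plan is to apply Proposition~\ref{prop01}, which reduces the problem to computing $\max \mathrm{Ap}(S, a) - a$. Every element of $S$ admits a representation $by + cz$ with $y, z \in \mathbb{N}$, and the congruence $c \equiv b\ell \pmod{a}$ shows that the residue of $by + cz$ modulo $a$ equals $b(y + \ell z) \bmod a$. Consequently, the Apéry element in each residue class $m \in \{0, 1, \dots, a-1\}$ is obtained by minimising $by + cz$ over non-negative pairs with $y + \ell z \equiv m \pmod{a}$.

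The first step is to restrict the parameter region for minimisers. The primitive translations preserving the residue class are $(-\ell, +1)$, whose value change is $c - b\ell$, and $(a, 0)$, with value change $ab$. By Lemma~\ref{lem01tri} we have $b\ell > c$, so the translation $(-\ell, +1)$ strictly decreases the value whenever it is admissible; hence every Apéry representative satisfies $0 \leq y < \ell$. Beyond this, composite moves of the form $(j\ell - a, -j)$ for $j \geq 1$, admissible when $y + j\ell \geq a$ and $z \geq j$, change the value by $j(b\ell - c) - ab$, and the hypothesis $br < cq$ dictates when such moves provide a further decrease. Putting the two sides of the analysis together identifies the Apéry set with a staircase-shaped region $\Omega \subseteq \{(y, z) : 0 \leq y < \ell,\; z \geq 0\}$, whose geometry is encoded by the parameters $q, r$ arising from the decomposition $a = q(a - \ell) + r$ with $0 \leq r < a - \ell$.

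The second step is to compute the maximum of $by + cz$ on $\Omega$. Because $\Omega$ is a monotone staircase, the maximum is attained at one of its north-east corners, and under the hypotheses $\ell > \kappa$ and $br < cq$ there are precisely two competing candidates: a corner lying on the $y$-axis contributing the value $b\{(\lambda+1)(a-\ell) + r - 1\}$, and a corner with $y = a - \ell - 1$ contributing $b(a - \ell - 1) + c(q - \lambda - 1)$. The quantity $\lambda = \left\lfloor (c(q-1) - br)/(b(a-\ell) + c) \right\rfloor$ is precisely the threshold at which the first candidate overtakes the second; comparing the two candidates and subtracting $a$, as prescribed by Proposition~\ref{prop01}, produces the two branches of the piecewise formula.

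The main obstacle will be the explicit identification of the staircase $\Omega$ and the verification that no other corner or interior lattice point yields a larger value of $by + cz$. This requires a careful case analysis of every admissible integer translation within a residue class, with joint bookkeeping on the constraints $y + \ell z \equiv m \pmod{a}$ and $y, z \geq 0$; once the two candidate maxima are isolated, the inequality characterising $\lambda$ reduces to elementary manipulations of $a, b, c, \ell, q, r$.
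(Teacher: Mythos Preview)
The paper does not supply its own proof of this statement: it is quoted verbatim as Theorem~3 of \cite{tripathi}, with no argument given. There is therefore nothing in the paper to compare your proposal against.

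As for the proposal itself, it is a plan rather than a proof. The reduction to $\max\mathrm{Ap}(S,a)-a$ via Proposition~\ref{prop01} and the parametrisation of Ap\'ery elements as $by+cz$ with residue governed by $y+\ell z\bmod a$ are correct and are indeed the starting point of Tripathi's original argument. However, the substantive content of the theorem lies entirely in what you call ``the main obstacle'': pinning down the staircase region $\Omega$ explicitly, showing it has exactly the two claimed extremal corners, and verifying that the threshold between the two branches is precisely $\lambda=\lfloor (c(q-1)-br)/(b(a-\ell)+c)\rfloor$. None of this is carried out; you only assert that the corners are $(\,(\lambda+1)(a-\ell)+r-1,\,0\,)$ and $(\,a-\ell-1,\,q-\lambda-1\,)$ without deriving them. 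In particular, your description of the first corner as ``lying on the $y$-axis'' is at odds with the restriction $0\le y<\ell$ you imposed earlier (the value $(\lambda+1)(a-\ell)+r-1$ can exceed $\ell$), which suggests the geometry of $\Omega$ has not yet been worked out consistently. To turn this into a proof you would need to reproduce the column-by-column analysis of \cite{tripathi} (Lemmas~3--5 and the proof of Theorem~3 there), which tracks how the minimal $z$ in each residue class changes as $y$ runs through $\{0,\dots,a-1\}$ and uses the hypothesis $br<cq$ to show the staircase is monotone with only the two stated extremal corners.
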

	
	\begin{theorem}\label{thm03tri4}
		If $\ell > \kappa$ and $b(\ell-\overline{r}) < c(\overline{q}-1)$, then
		\[ \mathrm{F}(\langle a,b,c \rangle) = \begin{cases} b(\ell-1) + c(\overline{q}-1) - a, \mbox{ if } 0 \leq \overline{r} < \ell-\kappa, \\
			b(\overline{r}-1) + c\overline{q} - a, \mbox{ if } \ell-\kappa \leq \overline{r} < \ell. \end{cases} \]
	\end{theorem}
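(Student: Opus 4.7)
The plan is to invoke Proposition~\ref{prop01}, reducing the problem to computing $\max \mathrm{Ap}(S,a)-a$. Every element of $S$ has the form $xa+yb+zc$ with $x,y,z\in\mathbb{N}$, and the minimum in each residue class modulo $a$ forces $x=0$; hence $\mathrm{Ap}(S,a) \subseteq \{yb+zc : y,z\ge 0\}$. Since $c \equiv \ell b \pmod{a}$, one has $yb+zc \equiv (y+z\ell)b \pmod{a}$, so the residue classes modulo $a$ are indexed by $j = (y+z\ell) \bmod a \in \{0,1,\ldots,a-1\}$; within a fixed class, the admissible values of $m := y+z\ell$ form the arithmetic progression $\{j,j+a,j+2a,\ldots\}$.

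Next I would fix $j$ and minimise $yb+zc=mb-z(\ell b-c)$ over admissible $(y,z)$. Because $\ell>\kappa$ gives $\ell b>c$, the minimum over $z$ with $0\le z\le m/\ell$ is attained at $z=\lfloor m/\ell\rfloor$, defining $f(m)=mb-\lfloor m/\ell\rfloor(\ell b-c)$. A direct calculation using $a=\overline{q}\ell+\overline{r}$ shows $\lfloor(m+a)/\ell\rfloor-\lfloor m/\ell\rfloor\in\{\overline{q},\overline{q}+1\}$, whence
\[
f(m+a)-f(m)\in\bigl\{\,\overline{r}b+\overline{q}c,\; (\overline{q}+1)c-(\ell-\overline{r})b\,\bigr\}.
\]
The first quantity is clearly positive; the second is positive by the hypothesis $(\ell-\overline{r})b<(\overline{q}-1)c<(\overline{q}+1)c$. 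Hence $f$ is strictly increasing along each progression, its minimum occurs at $m=j$, and the explicit description
\[
\mathrm{Ap}(S,a) = \{\,yb+zc : 0\le y<\ell,\; 0\le z<\overline{q}\,\}\;\cup\;\{\,yb+\overline{q}c : 0\le y<\overline{r}\,\}
\]
follows, with cardinality $\overline{q}\ell+\overline{r}=a$ matching Proposition~\ref{prop01}.

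Finally, the maximum of $\mathrm{Ap}(S,a)$ is attained at a corner of one of the two rectangular blocks, namely $(y,z)=(\ell-1,\overline{q}-1)$ giving $b(\ell-1)+c(\overline{q}-1)$, or $(y,z)=(\overline{r}-1,\overline{q})$ giving $b(\overline{r}-1)+c\overline{q}$. Their difference is $(\ell-\overline{r})b-c$; since $\gcd(b,c)=1$ with $b\ge 2$ forces $c/b\notin\mathbb{Z}$, this quantity is positive iff $\ell-\overline{r}\ge\kappa+1$, i.e., iff $\overline{r}<\ell-\kappa$. Subtracting $a$ yields the two cases of the theorem. The main obstacle is the minimality argument in the middle paragraph: proving that $f$ is strictly increasing along each progression requires both hypotheses simultaneously, with the second difference crucially using $b(\ell-\overline{r})<c(\overline{q}-1)$ rather than merely $\ell>\kappa$; the rest is straightforward combinatorial bookkeeping.
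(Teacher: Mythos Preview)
The paper does not actually prove this statement: it is quoted verbatim as Theorem~4 of \cite{tripathi}, with no argument supplied (see the sentence preceding Theorems~\ref{thm02tri3} and~\ref{thm03tri4}). So there is no ``paper's own proof'' to compare against.

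Your argument is nonetheless correct and complete. The key points check out: writing $yb+zc=mb-z(\ell b-c)$ and maximising $z$ gives the function $f(m)$; the increment $f(m+a)-f(m)$ equals $ab-\delta(\ell b-c)$ with $\delta\in\{\overline q,\overline q+1\}$, producing exactly the two values you list; both are positive under the stated hypotheses (indeed you only need the weaker inequality $b(\ell-\overline r)<c(\overline q+1)$ here, so the theorem's hypothesis is more than sufficient). Monotonicity then pins the Ap\'ery element for class $j$ at $f(j)$, and writing $j=z\ell+y$ with $0\le y<\ell$ yields precisely the ``staircase'' description you give, whose cardinality is $\overline q\ell+\overline r=a$ as required. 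The final comparison $(\ell-\overline r)b-c\gtrless0$ is equivalent to $\overline r\lessgtr\ell-\kappa$ because $\gcd(b,c)=1$ and $b\ge2$ force $c/b\notin\mathbb Z$, so equality cannot occur and the integer threshold is exactly $\kappa+1$.

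One small point worth making explicit for a polished write-up: when $\overline r=0$ the second block is empty, so the competitor $(\overline r-1)b+\overline qc$ does not exist; but then $0<\ell-\kappa$ automatically places you in the first case, and the formula is still valid. Otherwise the argument is clean and, in fact, slightly sharper than the theorem as stated, since it shows the same conclusion holds under the weaker hypothesis $b(\ell-\overline r)<c(\overline q+1)$.
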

	
	We define $A:=br-cq$ and $B:=b(a-\ell-r)+c(q+1)$. Furthermore, let $\mu'$ be the largest non-negative integer $m$ such that $\left\lfloor \frac{Bi}{A} \right\rfloor = \left\lfloor \frac{(a-\ell-r)i}{r} \right\rfloor$ for all $i\in\{0,\dots,m\}$.
	
	\begin{remark}\label{rem04tri}
		 To avoid ambiguity, the definition of $\mu'$ has been slightly modified from that in \cite{tripathi}: we define $\mu'$ as the largest non-negative integer $m$ such that $\left\lfloor \frac{Bm}{A} \right\rfloor = \left\lfloor \frac{(a-\ell-r)m}{r} \right\rfloor$. By this definition, in Example~5 of \cite{tripathi}, we would get $\mu'=9$. However, the correct value is $\mu'=4$, which agrees with our definition and, moreover, is the one given in \cite{tripathi}. This fact was also pointed out in Remark~5.17 of \cite{suhajda}.
	\end{remark}
	
	The following claim will be necessary for Theorem~\ref{thm08tri5}. Set $\mathcal{X} = \{ x_i \mid 0\leq i \leq \mu' \}$, where $x_i=r-\left((a-\ell)i\pmod{r}\right)$.
	
	\begin{claim}\label{claim05tri}
		Let $\ell > \kappa$ and $br > cq$. Let $\mathcal{X}_{ord}$ be the set formed by the ordered elements of $\mathcal{X}$, and let $\mathcal{X}_{dist}$ be the set formed by the distances between two consecutive elements of $\mathcal{X}_{ord}$. If $\mu'> \left\lfloor \frac{r}{u} \right\rfloor$, then $\mathcal{X}_{dist}$ has exactly two elements, $d_1<d_2$.		
	\end{claim}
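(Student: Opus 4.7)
The plan is to recognise this claim as an instance of the classical three-distance (Steinhaus) theorem applied to the orbit of multiplication by $a-\ell$ modulo $r$. First I would observe that $x_i = r - \bigl((a-\ell)i \bmod r\bigr)$ differs from $y_i := (a-\ell)i \bmod r$ only by the reflection $x \mapsto r - x$ on $[0, r]$, which preserves the multiset of consecutive gaps once the points are ordered. It therefore suffices to analyse the gap set of $\{y_0, y_1, \ldots, y_{\mu'}\}$ ordered on $[0, r)$, that is, the initial segment of length $\mu' + 1$ of the rotation orbit associated with $(a-\ell)/r$ on the circle $\mathbb{R}/r\mathbb{Z}$.

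The three-distance theorem asserts that, for any rotation number and any finite initial orbit segment, the set of consecutive gaps has at most three distinct values, and when three values occur the largest is the sum of the two smaller ones. This already gives $|\mathcal{X}_{\mathrm{dist}}| \le 3$, so the task reduces to excluding the cases $|\mathcal{X}_{\mathrm{dist}}| = 1$ and $|\mathcal{X}_{\mathrm{dist}}| = 3$ under the hypotheses $\ell > \kappa$, $br > cq$, and $\mu' > \lfloor r/u \rfloor$.

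For the upper bound of two, I would translate the floor identity $\lfloor Bi/A \rfloor = \lfloor (a-\ell-r)i/r \rfloor$ defining $\mu'$ into a statement about when the orbit first introduces a third, strictly smaller gap. With $A = br - cq$ and $B = b(a-\ell-r) + c(q+1)$, the slopes $B/A$ and $(a-\ell-r)/r$ should parametrise, respectively, the candidate three-gap configuration and its two-gap predecessor; the agreement of their integer parts up to index $\mu'$ is exactly the condition that no new subdivision has yet occurred. Thus $\mu'$ should coincide with the last index before a three-gap refinement, forcing $|\mathcal{X}_{\mathrm{dist}}| \le 2$ on the range $i \le \mu'$. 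For the lower bound of two, the hypothesis $\mu' > \lfloor r/u \rfloor$ would rule out the degenerate one-gap configuration, in which the orbit divides $[0, r)$ into congruent intervals: that situation requires $\mu' + 1$ to be a denominator of a particular continued-fraction convergent of $(a-\ell)/r$, and the stated bound forces us past this denominator so that a second gap size has already appeared.

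The main obstacle is the bookkeeping that identifies $\mu'$ precisely with the transition index between the two-gap and three-gap regimes, and that matches $\lfloor r/u \rfloor$ with the correct convergent denominator. Neither step follows directly from the three-distance theorem: each requires a careful analysis of the continued-fraction expansion of $(a-\ell)/r$ together with the arithmetic meaning of $A$, $B$, $q$, $r$, and $u$. This algebraic-combinatorial translation, rather than the three-distance theorem itself, is where the technical effort concentrates, and is plausibly why the authors present the result as a claim rather than as a theorem.
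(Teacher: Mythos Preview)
The paper does not prove this statement. It is deliberately labelled a \emph{Claim}, and the authors say explicitly in both the introduction and the conclusion that they were unable to supply rigorous proofs for the claims in Sections~3 and~4; Remark~4.7 even records their belief that this particular claim should be easier to prove than a related conjecture in~\cite{suhajda}, but no argument is offered. There is thus no proof in the paper against which to compare your attempt.

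Assessed on its own merits, your outline points in a sensible direction. The reflection $x\mapsto r-x$ does preserve the multiset of consecutive gaps, and the three-distance theorem is the natural tool governing the gap structure of $\{\,iu \bmod r : 0\le i\le \mu'\,\}$. Your argument for ruling out a single gap length is essentially complete: the first $\lfloor r/u\rfloor+1$ orbit points are $0,u,2u,\ldots$ with uniform spacing $u$, and once $\mu'>\lfloor r/u\rfloor$ a wraparound has occurred, introducing a second gap length (one should check $\gcd(u,r)=1$, which follows from the pairwise coprimality hypotheses on $a,b,c$). The genuine gap, which you yourself flag, is the exclusion of three gap lengths. Your assertion that the floor condition $\lfloor Bi/A\rfloor=\lfloor(a-\ell-r)i/r\rfloor$ is ``exactly the condition that no new subdivision has yet occurred'' is the whole content of the claim, and it is not a purely combinatorial statement about rotations: the quantities $A=br-cq$ and $B=b(a-\ell-r)+c(q+1)$ involve the semigroup generators $b,c$, so the identification of $\mu'$ with a transition index in the three-distance refinement requires linking the Ap\'ery-set structure of $\langle a,b,c\rangle$ to the continued-fraction expansion of $u/r$. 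That linkage is precisely what neither Tripathi nor the present authors establish, and your proposal does not supply it either. In short, you have correctly located where the difficulty lies, but have not resolved it.
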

	
	\begin{remark}\label{rem06tri}
		In Theorem~5 of \cite{tripathi}, $d_1 = \left\lceil \frac{r}{u} \right\rceil u-r$ is defined. However, this equality is incorrect, as demonstrated in Example~5.18 of \cite{suhajda}.
	\end{remark}
	
	\begin{remark}\label{rem07tri}
		In \cite{suhajda}, it is conjectured that $d_2$ always corresponds to $\mu'$. We have preferred to be cautious and define $d_2$ as the second possible value in $\mathcal{X}_{dist}$. Moreover, we believe that it will be easier to prove Claim~\ref{claim05tri} than Theorem~5.16 in \cite{suhajda}.
	\end{remark}
	
	The following result is a reformulation of Theorem~5 in \cite{tripathi} that takes into account the above two remarks.
	
	\begin{theorem}\label{thm08tri5}
		Let $\ell > \kappa$ and $br > cq$. Let $u \equiv (a-\ell)\pmod{r}$.
		\begin{itemize}
			\item[(a)] If $\mu'< \left\lfloor \frac{r}{u} \right\rfloor$, then $\mathrm{F}(\langle a,b,c \rangle) =$ 
			\[ \max\left\{ b(r-\mu'u-1), b(u-1)+c\left(\mu' + \left( \left\lfloor \frac{(a-\ell)\mu'}{r} \right\rfloor+1 \right)q \right) \right\} + cq\left\lfloor \frac{a-\ell-1}{r} \right\rfloor -a. \]
			
			\item[(b)] Let $\mu'> \left\lfloor \frac{r}{u} \right\rfloor$. Let $d_1$, $d_2$ be the numbers given by Claim~\ref{claim05tri}, and let $p_i$ be the largest positive integer such that $x_{p_i}+d_i\in\mathcal{X}$ for $i=1,2$. Then
			\[ \mathrm{F}(\langle a,b,c \rangle) = \max\{b(d_1-1)+cy_{p_1}, b(d_2-1)+cy_{p_2} \} + cq\left\lfloor \frac{a-\ell-1}{r} \right\rfloor -a, \]
			where $y_{p_i} = q\left(\left\lfloor \frac{(a-\ell)p_i}{r} \right\rfloor +1\right) + p_i$ for $i=1,2$.
		\end{itemize} 
	\end{theorem}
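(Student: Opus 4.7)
The plan is to compute $\mathrm{F}(S) = \max \mathrm{Ap}(S,a) - a$ via Proposition~\ref{prop01} by describing $\mathrm{Ap}(S,a)$ explicitly and then showing that its maximum is attained at one of two explicitly constructed elements (which, by Proposition~\ref{prop03}, are the $\leq_S$-maximal elements of $\mathrm{Ap}(S,a)$ and hence the pseudo-Frobenius numbers shifted by $a$).

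For each residue $i \in \{0,\dots,a-1\}$, write the minimal element $w(i) \in S$ in that class as $w(i) = b\,y(i) + c\,z(i)$ with $y(i),z(i) \geq 0$. Since $c \equiv \ell b \pmod a$, the residue condition becomes $y + \ell z \equiv i \pmod a$, and I would first observe that, because $\ell > \kappa = \lfloor c/b\rfloor$ forces $b\ell > c$ (so $b\ell - c$ is a positive multiple of $a$), the minimizer satisfies $0 \leq y(i) < \ell$. Enumerating the pairs $(y,z)$ with $y < \ell$ and tracking how $y$ evolves as $z$ increases by one (with $y$ decreasing by $\ell \bmod a$) produces a ``staircase'' in the $(y,z)$-plane whose horizontal step-widths are governed by $a - \ell$ and whose vertical shifts happen every $q$ or $q+1$ steps, dictated by the division $a = q(a-\ell) + r$. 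The parameter $u \equiv (a-\ell) \pmod r$ and the integer $\lfloor (a-\ell-1)/r\rfloor$ arise from iterating this division on the reduced problem of coordinates modulo $r$.

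In case (a), $\mu' < \lfloor r/u \rfloor$, the staircase makes few ``turns'' before terminating, and I would show by direct induction on $i$ (using the modified definition of $\mu'$ from Remark~\ref{rem04tri}) that the two rightmost extreme points of the staircase have first coordinates $r-\mu' u -1$ and $u-1$; the corresponding $z$-coordinates, together with the global offset $cq\lfloor (a-\ell-1)/r\rfloor$ accounting for the wraparounds in $z$, produce the formulas in the statement. In case (b), where the staircase wraps around more and $\mu' > \lfloor r/u \rfloor$, I would use Claim~\ref{claim05tri} to identify the two possible horizontal step-widths $d_1 < d_2$ in $\mathcal{X}_{\mathrm{ord}}$; the two $\leq_S$-maximal Ap\'ery elements then correspond to the last indices $p_1, p_2$ at which a step of size $d_1, d_2$ respectively occurs, giving $y_{p_i} = q(\lfloor (a-\ell)p_i/r\rfloor + 1) + p_i$.

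The principal obstacle is Claim~\ref{claim05tri} itself: it is a three-distance-theorem-type statement about the orbit $\{(a-\ell)i \pmod r : 0\leq i \leq \mu'\}$, and both the existing formulations in \cite{tripathi} and \cite{suhajda} contain errors (see Remarks~\ref{rem06tri}, \ref{rem07tri}); a careful proof would need to track how the three gap-lengths of the orbit collapse to exactly two once $\mu'$ exceeds $\lfloor r/u \rfloor$. A secondary obstacle is verifying that the two staircase extrema singled out in each case are in fact the $\leq_S$-maximal elements of $\mathrm{Ap}(S,a)$, which requires the hypothesis $br > cq$ to rule out alternative swaps of the form $(y,z) \to (y-r,\,z+q)$ that would otherwise yield a dominating element.
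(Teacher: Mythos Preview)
The paper does not prove Theorem~\ref{thm08tri5}. It is stated there as ``a reformulation of Theorem~5 in \cite{tripathi} that takes into account the above two remarks,'' with no accompanying proof, and the Conclusions section explicitly lists providing rigorous proofs of these reformulated statements as future work for which the authors ``lack immediate ideas on the necessary tools.'' So there is no paper proof to compare your proposal against.

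That said, a few comments on your sketch. Your overall architecture---parametrising $\mathrm{Ap}(S,a)$ by a staircase in the $(y,z)$-plane, reducing via $a=q(a-\ell)+r$, and identifying the two $\leq_S$-maximal elements---is the natural one and matches the strategy underlying Tripathi's original argument. However, your proposal is a plan rather than a proof, and you correctly flag its two genuine gaps: (i) Claim~\ref{claim05tri} is stated in the paper without proof (indeed, the authors present it as a Claim precisely because they could not prove it), so invoking it in case~(b) leaves the argument conditional; and (ii) the verification that the two designated staircase extrema are exactly the $\leq_S$-maximal elements is only asserted, not carried out. Until the three-distance-type statement behind Claim~\ref{claim05tri} is established and the maximality check is made rigorous, the proposal remains at the same level of completeness as the paper itself---namely, an outline awaiting a proof.
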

	
	\begin{remark}\label{rem09tri}
		Regarding Claim~\ref{claim10tri} and Theorem~\ref{thm11tri6}, similar remarks to Remarks~\ref{rem04tri}, \ref{rem06tri}, and \ref{rem07tri} can be made.  
	\end{remark}
	
	We define $\overline{A}:=b(\ell-\overline{r})-c(\overline{q}-1)$ and $\overline{B}=b\overline{r}+c\overline{q}$. Furthermore, let $\overline{\mu}'$ be the largest non-negative integer $m$ such that $\left\lfloor \frac{\overline{B}i}{\overline{A}} \right\rfloor = \left\lfloor \frac{\overline{r}i}{\ell-\overline{r}} \right\rfloor$ for all $i\in\{0,\dots,m\}$.
	
	Set $\overline{\mathcal{X}} = \{ x_i \mid 0\leq i \leq \overline{\mu}' \}$, where $x_i=(\ell-\overline{r})-\left(\overline{r}i\pmod{(\ell-\overline{r})}\right)$.
	
	\begin{claim}\label{claim10tri}
		Let $\ell > \kappa$ and $b(\ell-\overline{r}) > c(\overline{q}+1)$. Let $\overline{\mathcal{X}}_{ord}$ be the set formed by the ordered elements of $\overline{\mathcal{X}}$, and let $\overline{\mathcal{X}}_{dist}$ be the set formed by the distances between two consecutive elements of $\overline{\mathcal{X}}_{ord}$. If $\overline{\mu}'> \left\lfloor \frac{\ell-\overline{r}}{\overline{u}} \right\rfloor$, then $\overline{\mathcal{X}}_{dist}$ has exactly two elements $d_1<d_2$.		
	\end{claim}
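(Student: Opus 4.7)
The plan is to follow the same strategy as the (still open) Claim~\ref{claim05tri}, since Claim~\ref{claim10tri} is its mirror image under the symmetry that swaps the roles of $(r, a-\ell)$ with $(\ell-\overline{r}, \overline{r})$. Setting $N := \ell-\overline{r}$ and $\overline{u}\equiv \overline{r}\pmod{N}$ (by analogy with the definition of $u$ before Theorem~\ref{thm08tri5}), the set $\overline{\mathcal{X}}$ coincides via $y\mapsto N-y$ with the first $\overline{\mu}'+1$ points of the orbit $\{i\overline{r}\bmod N : 0 \leq i \leq \overline{\mu}'\}$ of the rotation by $\overline{r}$ on $\mathbb{Z}/N\mathbb{Z}$. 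Consequently, the distances recorded in $\overline{\mathcal{X}}_{dist}$ are exactly the gaps between consecutive orbit points when viewed on the interval $[0,N)$.

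The first step is to invoke the three-distance (Steinhaus) theorem: for any $\alpha\in\mathbb{Z}/N\mathbb{Z}$ and any number of iterates, the ordered gaps take at most three distinct values, and drop to exactly two values precisely at the ``convergent'' indices coming from the continued-fraction expansion of $\overline{r}/N$ (equivalently, from the Euclidean algorithm applied to $(\overline{r},N)$). The hypothesis $\overline{\mu}' > \lfloor N/\overline{u}\rfloor$ is meant to push $\overline{\mu}'$ past the first nontrivial convergent, ruling out the trivial ``two-value'' regime that already occurs for very small orbits.

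The core step is to show that the defining condition on $\overline{\mu}'$, namely
\[ \left\lfloor \tfrac{\overline{B}i}{\overline{A}} \right\rfloor = \left\lfloor \tfrac{\overline{r}i}{N} \right\rfloor \text{ for all } 0\leq i\leq \overline{\mu}', \]
forces $\overline{\mu}'$ to coincide with an Ostrowski convergent index of $\overline{r}/N$. The strategy is to express $\overline{B}i/\overline{A} - \overline{r}i/N$ as a single rational function in $i$ using $\overline{A}=b(\ell-\overline{r})-c(\overline{q}-1)$ and $\overline{B}=b\overline{r}+c\overline{q}$ (note that $\overline{A}>0$ by the hypothesis $b(\ell-\overline{r}) > c(\overline{q}+1)$), and then to locate the smallest $i$ at which the two floors separate. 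Up to standard manipulations, this critical index should align with the Euclidean step at which the remainder first drops below a threshold determined by $b$ and $c$. Combined with $\overline{\mu}'>\lfloor N/\overline{u}\rfloor$, this lands $\overline{\mu}'$ at a convergent, where the three-distance theorem yields exactly two distinct gap values $d_1<d_2$. The ``at least two'' direction is straightforward: a single gap would force $(\overline{\mu}'+1)\overline{r}\equiv 0\pmod N$ with no earlier collision, which the bound $\overline{\mu}'>\lfloor N/\overline{u}\rfloor$ precludes.

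The main obstacle is precisely the identification of $\overline{\mu}'$ with a specific Ostrowski convergent index: the definition via a floor-equality of two different linear fractions hides a continued-fraction structure that must be extracted step by step. As the authors themselves acknowledge for Claim~\ref{claim05tri}, this is exactly where a clean argument has so far been elusive, and \cite{suhajda} formulates the sharper---yet still open---conjecture that $d_2$ corresponds to $\overline{\mu}'$. I therefore expect a rigorous proof to require an induction along the Euclidean algorithm for $(\overline{r},N)$, reorganised in the language of Ostrowski numeration; Claim~\ref{claim10tri} would then follow from Claim~\ref{claim05tri} through the duality between $(r,a-\ell,q)$ and $(\ell-\overline{r},\overline{r},\overline{q})$, reducing---though not eliminating---the combinatorial work.
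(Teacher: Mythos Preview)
The paper provides no proof of Claim~\ref{claim10tri}. As the authors state explicitly in the Introduction, results labeled ``Claim'' in Sections~\ref{rar} and~\ref{tri} are precisely those for which they could not supply rigorous arguments, and in the Conclusions they list proving these claims as future work. There is therefore nothing in the paper to compare your attempt against.

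Your proposal is not a proof either, and you say so yourself. The idea of recognising $\overline{\mathcal{X}}$ (after the reflection $y\mapsto N-y$ with $N=\ell-\overline{r}$) as an initial segment of a rotation orbit on $\mathbb{Z}/N\mathbb{Z}$ and then invoking the three-distance theorem is the natural framework, and the duality reducing Claim~\ref{claim10tri} to Claim~\ref{claim05tri} is a reasonable organising principle. But the three-distance theorem only gives \emph{at most three} gap lengths in general; the entire difficulty is showing that the defining floor-equality for $\overline{\mu}'$ forces it to land on a convergent index where the count drops to exactly two. Your sentence ``up to standard manipulations, this critical index should align with the Euclidean step\ldots'' is precisely the missing content, and nothing in the proposal carries it out. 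The ``at least two'' direction is also not correct as stated: a single common gap $d$ does not force $(\overline{\mu}'+1)\overline{r}\equiv 0\pmod{N}$, only that the $\overline{\mu}'+1$ points form an arithmetic progression in $[0,N)$, which can happen without any such divisibility (e.g.\ $N=10$, $\overline{r}=3$, $\overline{\mu}'=3$). In short, you have correctly identified both the right toolkit and the exact obstruction, but the claim remains as open after your proposal as it is in the paper.
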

	
	\begin{theorem}\label{thm11tri6}
		Let $\ell > \kappa$ and $b(\ell-\overline{r}) > c(\overline{q}+1)$. Let $\overline{u} \equiv \overline{r}\pmod{(\ell-\overline{r})}$.
		\begin{itemize}
			\item[(a)] If $\overline{\mu}'< \left\lfloor \frac{\ell-\overline{r}}{\overline{u}} \right\rfloor$, then $\mathrm{F}(\langle a,b,c \rangle) =$ 
			\[ \max\left\{ b(\ell-\overline{r}-\overline{\mu}'\overline{u}-1), b(\overline{u}-1)+c\left( \left(\left\lfloor \frac{\ell\overline{\mu}'}{\ell-\overline{r}} \right\rfloor+1 \right)\overline{q} + \left\lfloor \frac{\overline{r}\overline{\mu}'}{\ell-\overline{r}} \right\rfloor+1 \right) \right\} + \]
			\[ c\left((\overline{q}+1)\left\lfloor \frac{\ell-1}{\ell-\overline{r}} \right\rfloor-2\right) -a. \]
			
			\item[(b)] Let $\overline{\mu}'> \left\lfloor \frac{\ell-\overline{r}}{\overline{u}} \right\rfloor$. Let $d_1$, $d_2$ be the numbers given by Claim~\ref{claim10tri}, and let $p_i$ be the largest positive integer such that $x_{p_i}+d_i\in\overline{\mathcal{X}}$ for $i=1,2$. Then
			\[ \mathrm{F}(\langle a,b,c \rangle) = \max\{b(d_1-1)+cy_{p_1}, b(d_2-1)+cy_{p_2} \} + c\left((\overline{q}+1)\left\lfloor \frac{\ell-1}{\ell-\overline{r}} \right\rfloor-2\right) -a, \]
			where $y_{p_i} = \overline{q}\left(\left\lfloor \frac{\ell p_i}{\ell-\overline{r}} \right\rfloor +1 \right) + \left\lfloor \frac{\overline{r} p_i}{\ell-\overline{r}} \right\rfloor+1$ for $i=1,2$.
		\end{itemize} 
	\end{theorem}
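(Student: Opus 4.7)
The plan is to follow the scheme of the proof of Theorem~\ref{thm08tri5} (given in \cite{tripathi} and supplemented in \cite{suhajda}), adapting it to this ``dual'' setup in which the Euclidean-style iteration is driven by $(\ell,\overline{r})$ rather than by $(a-\ell,r)$. By Proposition~\ref{prop01}, it suffices to compute $\max(\mathrm{Ap}(S,a))$ and subtract $a$. Every element $w\in\mathrm{Ap}(S,a)$ admits a representation $w=by+cz$ with $y,z\geq0$, and since $c\equiv b\ell\pmod{a}$, the residue of $by+cz$ modulo $a$ equals $b(y+\ell z)\pmod{a}$. Because $\gcd(a,b)=1$, each residue class is parametrized by $y+\ell z\pmod{a}$, so the Ap\'ery element in each class is the minimum of $by+cz$ over admissible $(y,z)$.

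To describe these minimizers, observe that $\overline{q}=\lfloor a/\ell\rfloor$ and $\overline{r}=a-\overline{q}\ell$ yield the identity $(\overline{q}+1)\ell-(\ell-\overline{r})=a$: increasing $z$ by $\overline{q}+1$ and decreasing $y$ by $\ell-\overline{r}$ preserves $y+\ell z\pmod{a}$. The hypothesis $b(\ell-\overline{r})>c(\overline{q}+1)$ guarantees that this shift strictly decreases $by+cz$, so it can be iterated. The quantities $\overline{A}$, $\overline{B}$ and the floor-equality condition defining $\overline{\mu}'$ encode exactly how far this reduction can be continued before a second, finer lattice shift becomes available; the set $\overline{\mathcal{X}}$ then records the $y$-coordinates of the successive minimizers along the resulting staircase.

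For case (a), $\overline{\mu}'<\lfloor(\ell-\overline{r})/\overline{u}\rfloor$, the staircase exhibits only one step length, and the maximum over $\mathrm{Ap}(S,a)$ is attained at one of two explicit extremal corners, giving the two candidates inside $\max\{\cdot,\cdot\}$. For case (b), $\overline{\mu}'>\lfloor(\ell-\overline{r})/\overline{u}\rfloor$, Claim~\ref{claim10tri} supplies exactly two step lengths $d_1<d_2$, and the largest indices $p_1,p_2$ realising each step yield the two candidates $b(d_i-1)+cy_{p_i}$ with $y_{p_i}$ as in the statement. In both cases the additive term $c\left((\overline{q}+1)\lfloor(\ell-1)/(\ell-\overline{r})\rfloor-2\right)$ accounts for the cumulative ``wrap-around'' of the iteration before the maximum is reached; subtracting $a$ then yields $\mathrm{F}(S)$.

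The main obstacle is to establish Claim~\ref{claim10tri}, which asserts that $\overline{\mathcal{X}}_{dist}$ has exactly two elements. This is a Steinhaus-type (three-distance) statement for the truncated orbit $\{\overline{r}\,i\pmod{\ell-\overline{r}}\}_{i=0}^{\overline{\mu}'}$, and the delicate point is showing that the truncation at $\overline{\mu}'$ excludes the third distance that would otherwise appear in the full orbit. A secondary obstacle is the careful bookkeeping of the wrap-around term, where the original treatment in \cite{tripathi} contained errors corrected in \cite{suhajda} (see Remarks~\ref{rem04tri}--\ref{rem07tri}); using our modified definition of $\overline{\mu}'$ consistently throughout is essential to avoid an off-by-one in the final formula.
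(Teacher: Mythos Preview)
The paper does not supply its own proof of this theorem. Theorem~\ref{thm11tri6} is presented in Subsection~\ref{tri-pre} as a corrected reformulation of Theorem~6 of \cite{tripathi}, parallel to the reformulation of Theorem~5 given as Theorem~\ref{thm08tri5}; Remark~\ref{rem09tri} says only that the same caveats (Remarks~\ref{rem04tri}--\ref{rem07tri}) apply. In the Conclusions the authors explicitly list proving these alternative statements as future work and write that they ``lack immediate ideas on the necessary tools.'' So there is no paper proof to compare your attempt against.

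As for your proposal itself: it is a plan rather than a proof. The overall architecture you describe --- parametrise residues via $y+\ell z\pmod a$, use the lattice shift $(y,z)\mapsto(y-(\ell-\overline r),\,z+\overline q+1)$ which decreases $by+cz$ under the hypothesis $b(\ell-\overline r)>c(\overline q+1)$, and read off the Ap\'ery maxima from the resulting staircase --- is exactly the scheme behind Tripathi's Theorems~5 and~6, so the strategy is sound and is indeed the ``dual'' of Theorem~\ref{thm08tri5}. But you have not carried out the two places where the work lies. First, you defer entirely to Claim~\ref{claim10tri}; the paper labels this a Claim precisely because neither \cite{tripathi} nor \cite{suhajda} contains a proof, and your three-distance heuristic does not explain why truncation at $\overline\mu'$ (with the modified definition) kills the third gap. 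Second, the derivation of the additive term $c\bigl((\overline q+1)\lfloor(\ell-1)/(\ell-\overline r)\rfloor-2\bigr)$ and of the expressions $y_{p_i}$ is asserted (``accounts for the cumulative wrap-around'') but not verified; this is exactly where \cite{tripathi} went wrong for Theorem~5, so a genuine proof must redo this bookkeeping carefully in the barred variables rather than invoke analogy. Until those two pieces are written out, what you have is a correct outline of the intended argument, not a proof.
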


	\subsection{Application to $S(n)$}\label{tri-app}
	
	Let us now apply the results from the previous subsection to $S(n)=\langle f_n^2, f_{n+1}^2, f_{n+2}^2 \rangle$ for $n\geq 4$, where $f_n$, $f_{n+1}$, $f_{n+2}$ are three consecutive Fibonacci numbers.
	
	\begin{lemma}\label{lem12tri}
		If $n\geq 4$, then $\kappa = 2$ and
		\[ \ell = \begin{cases} f_nf_{n-3}+1 \mbox{ if $n$ is even,} \\[3pt] 2f_nf_{n-2}+1 \mbox{ if $n$ is odd.}
		\end{cases} \]
	\end{lemma}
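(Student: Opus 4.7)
The plan is to handle the two claims separately; the identity for $\ell$ will essentially follow from Lemma~\ref{lem04rar}, since $\ell$ is defined by the same congruence $b\ell \equiv c \pmod a$ that defines $s_0$ in Section~\ref{rar}.

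First I would prove $\kappa = \lfloor f_{n+2}^2/f_{n+1}^2 \rfloor = 2$ by establishing the double inequality $2 f_{n+1}^2 \leq f_{n+2}^2 < 3 f_{n+1}^2$. Expanding $f_{n+2}^2 = f_{n+1}^2 + 2 f_n f_{n+1} + f_n^2$, the lower bound $f_{n+2}^2 - 2 f_{n+1}^2 \geq 0$ becomes $f_n^2 + 2 f_n f_{n+1} - f_{n+1}^2 \geq 0$; substituting $f_{n+1} = f_n + f_{n-1}$ gives $2 f_n - f_{n+1} = f_{n-2}$, hence this expression equals $f_n^2 + f_{n+1} f_{n-2}$, which is strictly positive for $n \geq 4$. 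For the upper bound, a direct expansion yields $3 f_{n+1}^2 - f_{n+2}^2 = 2 f_{n+1} f_{n-1} - f_n^2$; Cassini's identity $f_{n-1} f_{n+1} - f_n^2 = (-1)^n$ then rewrites this as $f_n^2 + 2 (-1)^n$, which is positive since $f_n^2 \geq 9$ for $n \geq 4$.

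For $\ell$, note that $\gcd(f_{n+1}^2, f_n^2) = 1$, so the congruence $b\ell \equiv c \pmod a$ admits a unique representative in the interval $[0, f_n^2)$. Lemma~\ref{lem04rar} has already computed $b s_0 - c$ and shown it is a multiple of $a$ for the proposed values of $s_0$; it remains only to check that $0 \leq s_0 < f_n^2$. The even case $s_0 = f_n f_{n-3} + 1$ is immediate since $f_{n-3} < f_n$. In the odd case, $f_n^2 - (2 f_n f_{n-2} + 1) = f_n (f_n - 2 f_{n-2}) - 1 = f_n f_{n-3} - 1$, which is positive for $n \geq 4$ (this is the only place where the hypothesis $n \geq 4$ is actively used). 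Hence the candidate $s_0$ is the unique element of $[0, f_n^2)$ satisfying the congruence, so $\ell = s_0$.

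The main ``obstacle'' here is essentially absent: every step reduces to a short identity manipulation on Fibonacci numbers, and the only subtlety is ensuring that the hypothesis $n \geq 4$ is used where needed (in both the upper bound for $\kappa$ and the minimality check for $s_0$ in the odd case).
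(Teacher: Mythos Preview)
Your argument is correct. For $\kappa$ you follow essentially the same route as the paper (both expand $f_{n+2}^2=(f_{n+1}+f_n)^2$ and bound the difference), though your use of Cassini for the upper bound is a slightly different bookkeeping choice than the paper's use of $f_n<2f_{n-1}$.

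For $\ell$ the two arguments diverge. The paper computes the inverse $b^{-1}\equiv f_{n-1}^2\pmod{f_n^2}$ directly from Cassini's identity, then expands $f_{n+2}^2f_{n-1}^2$ and reduces modulo $f_n^2$ to obtain $cb^{-1}\equiv 2(-1)^nf_nf_{n-1}+1$, from which the two cases follow. Your approach instead observes that $\ell$ and the quantity $s_0$ of Section~\ref{rar} are defined by the \emph{same} congruence $b x\equiv c\pmod a$ with the same normalisation, so Lemma~\ref{lem04rar} already furnishes the answer; you then only need the range check $0\le s_0<f_n^2$, which was in fact already verified in that proof. Your route is more economical within the paper, while the paper's own proof is self-contained in Section~\ref{tri} and has the side benefit of exhibiting $b^{-1}$ explicitly.
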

	
	\begin{proof}
		For $\kappa$, we have that
		\[ 2 < \frac{f_{n+2}^2}{f_{n+1}^2} < 3 \Longleftrightarrow 2 < \frac{f_{n+2}^2 - f_{n+1}^2}{f_{n+1}^2} + 1 < 3 \Longleftrightarrow f_{n+1}^2 < 2f_{n+1}f_n + f_n^2 < 2f_{n+1}^2. \]
		Observe that
		\begin{itemize}
			\item the left inequality $f_{n+1}^2 < 2f_{n+1}f_n + f_n^2\,$ is true because $f_{n+1} < 2f_n$,
			\item and the right inequality $2f_{n+1}f_n + f_n^2 < 2f_{n+1}^2$ is equivalent to $f_n^2 < 2f_{n+1}f_{n-1}$, which is true because $f_n < 2f_{n-1}$.
		\end{itemize}
		Therefore, $\kappa=2$.
		
		Now, for $\ell$, recalling that $f_{n+1}f_{n-1}-f_n^2=(-1)^n$ (Cassini's identity), we derive that
		\[ f_{n+1}^2f_{n-1}^2 = (f_{n+1}f_{n-1})^2 = (f_n^2+(-1)^n)^2 = f_n^4 + 2(-1)^nf_n^2 +1 \equiv 1 \pmod{f_n^2} \Longrightarrow b^{-1}=f_{n-1}^2. \]
		Then
		\[ cb^{-1} = f_{n+2}^2f_{n-1}^2 = f_{n+1}^2f_{n-1}^2 + 2f_{n+1}f^nf_{n-1}^2 + f_n^2f_{n-1}^2 = \]
		\[ (f_n^2+(-1)^n)^2 +2(f_n^2+(-1)^n)f_nf_{n-1} + f_n^2f_{n-1}^2 = \]
		\[ f_n^4 + 2(-1)^nf_n^2 + 1 +2f_n^3f_{n-1} + 2(-1)^nf_nf_{n-1} + f_n^2f_{n-1}^2 \equiv (2(-1)^nf_nf_{n-1}+1) \pmod{f_n^2}. \]
		Thus, if $n$ is even, then
		\[ \ell = 2f_nf_{n-1}+1 - f_n^2 = f_nf_{n-3}+1\]
		and, if $n$ is odd,
		\[ \ell = -2f_nf_{n-1}+1 + 2f_n^2 = 2f_nf_{n-2}+1.\]
	\end{proof}
	
	\begin{lemma}\label{lem13tri}
		If $n$ is even, then $q=1$, $r=\ell$,
		\[ \overline{q} = \begin{cases} 2, \mbox{ if } n=4, \\ 3, \mbox{ if } n=6, \\ 4, \mbox{ if } n\geq 8, \end{cases} \]
		and
		\[ \overline{r} = \begin{cases} 1, \mbox{ if } n=4, \\ 13, \mbox{ if } n=6, \\ f_nf_{n-6}-4, \mbox{ if } n\geq 8. \end{cases}  \]
		
		If $n$ is odd, then
		\[ q = \begin{cases} 6, \mbox{ if } n=5, \\ 4, \mbox{ if } n\geq 7, \end{cases} \]
		\[ r = \begin{cases} 1, \mbox{ if } n=5, \\ f_nf_{n-6}+4, \mbox{ if } n\geq 7, \end{cases}  \]
		$\overline{q}=1$, and $\overline{r}=f_nf_{n-3}-1$.
	\end{lemma}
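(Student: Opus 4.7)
The strategy is to start from Lemma~\ref{lem12tri}, apply the elementary identity $f_n = 2f_{n-2}+f_{n-3}$ to compute $a-\ell$ cleanly, and then use the tripling identity $f_n = 4f_{n-3}+f_{n-6}$ (valid for $n \geq 6$, a consequence of $L_3=4$ in the formula $f_{n+k}=L_k f_n-(-1)^k f_{n-k}$) to obtain $r$ and $\overline{r}$ in the generic cases. The small values of $n$ for which $n-6<0$ will need to be treated separately.

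For \emph{even $n$}, the value $\ell = f_n f_{n-3}+1$ gives $a-\ell = f_n(f_n-f_{n-3})-1 = 2f_n f_{n-2}-1$. A one-line check that $2(a-\ell)-a = f_n(2f_{n-2}-f_{n-3})-2 > 0$ for $n\geq 4$ yields $a<2(a-\ell)$, hence $q=1$ and $r = a-(a-\ell) = \ell$. For $\overline{q}$, the cases $n=4,6$ are handled by direct substitution ($a/\ell = 9/4$ and $64/17$, respectively). For $n\geq 8$, the tripling identity produces $a-4\ell = f_n(f_n-4f_{n-3})-4 = f_n f_{n-6}-4$, which is nonnegative for $n\geq 8$ since $f_8 f_2 = 21$; and $5\ell-a>0$ is immediate from $f_{n-6}<f_{n-3}$. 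Hence $4\ell \leq a < 5\ell$, giving $\overline{q}=4$ and $\overline{r} = f_n f_{n-6}-4$.

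For \emph{odd $n$}, the value $\ell = 2f_n f_{n-2}+1$ gives $a-\ell = f_n(f_n-2f_{n-2})-1 = f_n f_{n-3}-1$. The case $n=5$ is immediate: $a-\ell = 4$ and $a/(a-\ell) = 25/4$, so $q=6$, $r=1$. For $n\geq 7$, the tripling identity gives $a-4(a-\ell) = f_n f_{n-6}+4>0$ and $5(a-\ell)-a = f_n(f_{n-3}-f_{n-6})-5 > 0$ (since $f_{n-3}-f_{n-6}\geq 2$ and $f_n\geq 13$ for $n\geq 7$), so $4(a-\ell) \leq a < 5(a-\ell)$, giving $q=4$ and $r = f_n f_{n-6}+4$. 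For $\overline{q}$, verify $\ell\leq a<2\ell$: the first reduces to $f_{n-1}>f_{n-2}+1/f_n$, the second to $f_n(4f_{n-2}-f_n)>-2$, both obvious. Thus $\overline{q}=1$ and $\overline{r} = a-\ell = f_n f_{n-3}-1$.

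\textbf{Main obstacle.} There is no conceptual difficulty: the proof is pure bookkeeping with Fibonacci identities. The one point worth noting is the tripling identity $f_n = 4f_{n-3}+f_{n-6}$, which is exactly what produces the clean expressions $f_n f_{n-6}\pm 4$ for $\overline{r}$ in the even case ($n\geq 8$) and $r$ in the odd case ($n\geq 7$). The boundary cases $n\in\{4,5,6\}$ are unavoidable, because the tripling identity requires $n-6\geq 0$, and must be checked by direct substitution.
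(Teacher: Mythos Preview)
Your proof is correct and follows essentially the same approach as the paper's: both bound $a/(a-\ell)$ and $a/\ell$ via elementary Fibonacci identities (the paper writes $f_n = 3f_{n-3}+2f_{n-4}$ where you use the equivalent tripling form $f_n = 4f_{n-3}+f_{n-6}$) and handle the small values of $n$ by direct substitution. Your version is in fact slightly more complete, since the paper only works out the even case in detail and leaves odd $n$ to ``arguments similar to those already presented''.
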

	
	\begin{proof}
		Let us suppose that $n$ is even. For $n=4$ and $n=6$, we check the equalities by direct computation. Therefore, let us assume that $n\geq 8$.
		
		For $q$, we need to prove that
		\[1 < \frac{f_n^2}{f_n^2-f_nf_{n-3}-1} < 2,\]
		which is equivalent to
		\[ f_nf_{n-3}+1 < 2f_nf_{n-3}+2 < f_n^2. \]
		The left inequality is trivial, and the right one follows from the equality $f_n=3f_{n-3}+2f_{n-4}$.
		
		Now, since $q=1$, then $r=\ell$ from their definitions.
		
		For $\overline{q}$, we need to prove that
		\[ 4 < \frac{f_n^2}{f_nf_{n-3}+1} < 5, \]
		which (using again the equality $f_n=3f_{n-3}+2f_{n-4}$) is equivalent to
		\[ f_nf_{n-3}+4 < 2f_nf_{n-4} < 2f_nf_{n-3}+5. \]
		Here, the right inequality is trivial, and the left one follows from the inequality $4<f_nf_{n-6}$.
		
		Finally, $\overline{r} = f_n^2 - 4(f_nf_{n-3}-1) = f_n(f_n-4f_{n-3}) + 4 = f_nf_{n-6} + 4$.
		
		Using arguments similar to those already presented, the result follows for odd $n$.
	\end{proof}
	
	\begin{corollary}\label{cor14tri}
		Let $n\geq4$.
		\begin{itemize}
			\item If $n$ is even, then $br>cq$, and therefore we can apply Theorem~\ref{thm08tri5}.
			\item If $n=5$, then $br<cq$, and therefore we can apply Theorem~\ref{thm02tri3}.
			\item If $n\geq7$ is odd, then $br>cq$, and therefore we can apply Theorem~\ref{thm08tri5}.
		\end{itemize}
	\end{corollary}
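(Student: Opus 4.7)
The plan is to substitute into $br - cq$ the values of $q$ and $r$ furnished by Lemma~\ref{lem13tri} and then determine the sign of the resulting expression in each of the three cases, which will reduce to elementary Fibonacci inequalities.

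For $n\geq 4$ even, Lemma~\ref{lem13tri} gives $q=1$ and $r=\ell$. A direct comparison of Lemma~\ref{lem12tri} with Lemma~\ref{lem04rar} shows that $s_0=\ell=r$ in the even case, so the computation already carried out in the proof of Lemma~\ref{lem04rar} yields immediately
\[ br-cq \;=\; f_{n+1}^2\ell - f_{n+2}^2 \;=\; f_n^2\bigl(f_nf_{n-1}-2\bigr), \]
which is strictly positive because $f_nf_{n-1}\geq f_4f_3=6$ for $n\geq 4$. For $n=5$, I would just plug in the values $q=6$, $r=1$, $b=f_6^2=64$, $c=f_7^2=169$ from Lemma~\ref{lem13tri} and read off $br=64<1014=cq$.

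The only case that actually requires a calculation is odd $n\geq 7$, where Lemma~\ref{lem13tri} gives $q=4$ and $r=f_nf_{n-6}+4$. Expanding $f_{n+2}^2=f_{n+1}^2+2f_{n+1}f_n+f_n^2$ and simplifying should yield
\[ br-cq \;=\; f_n\bigl(f_{n+1}^2 f_{n-6}-8f_{n+1}-4f_n\bigr), \]
and since $f_{n-6}\geq 1$ and $f_n<f_{n+1}$ for $n\geq 7$, the bracket dominates $f_{n+1}(f_{n+1}-12)$, which is positive because $f_8=21$. No step here appears to pose a real obstacle; the only point demanding a little care is matching notation between the two algorithms, namely verifying that the quantity $s_0$ of Algorithm~\ref{alg01rar} coincides with Tripathi's $\ell$, so that the algebraic identity already established in the proof of Lemma~\ref{lem04rar} can be transplanted into the even case without any recomputation.
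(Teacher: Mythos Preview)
Your proof is correct and follows essentially the same strategy as the paper: substitute the values of $q$ and $r$ from Lemma~\ref{lem13tri} and reduce the sign of $br-cq$ to an elementary Fibonacci inequality. The only notable difference is in the even case, where the paper argues directly via the chain $f_{n+1}^2(f_nf_{n-3}+1)>f_{n+2}^2 \Leftrightarrow f_{n+1}f_{n-3}>2+\tfrac{f_n}{f_{n+1}}$, whereas you import the closed form $br-cq=f_n^2(f_nf_{n-1}-2)$ from the proof of Lemma~\ref{lem04rar} in the Ram\'{\i}rez Alfons\'{\i}n--R{\o}dseth section; this cross-reference is legitimate (since indeed $s_0=\ell$) and arguably tidier, though the paper deliberately keeps the two algorithmic sections self-contained.
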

	
	\begin{proof}
		If $n\geq4$ is even, then we have that
		\[ br>cq \Longleftrightarrow f_{n+1}^2(f_nf_{n-3}+1) > f_{n+2}^2\cdot1 \Longleftrightarrow f_{n+1}^2f_{n-3} > f_{n+3} \Longleftrightarrow f_{n+1}f_{n-3} > 2 + \frac{f_n}{f_{n+1}}, \]
		which is true since $f_{n+1}f_{n-3} > 3$ for all $n\geq4$.
		
		For $n=5$, we have that $br=64\cdot1<169\cdot6=cq$.
		
		Finally, if $n\geq7$ is odd, then
		\[ br>cq \Longleftrightarrow f_{n+1}^2(f_nf_{n-6}+4) > f_{n+2}^2\cdot1 \Longleftrightarrow f_nf_{n-6}+3 > 2\frac{f_n}{f_{n+1}} + \left(\frac{f_n}{f_{n+1}}\right)^2, \]
		which is true since $f_nf_{n-6}>0$ and $3 > 2\frac{f_n}{f_{n+1}} + \left(\frac{f_n}{f_{n+1}}\right)^2$ for all $n\geq7$.
	\end{proof}
	
	\begin{lemma}\label{lem15tri}
		If $n$ is even, then
		\[ u = \begin{cases} 1, \mbox{ if } n=4, \\ 13, \mbox{ if } n=6, \\ f_nf_{n-6}-4, \mbox{ if } n\geq 8.
		\end{cases}\]
		
		If $n\geq 7$ is odd, then
		\[ u = \begin{cases} 4, \mbox{ if } n=7, \\ 55, \mbox{ if } n=9, \\ f_nf_{n-9}-17, \mbox{ if } n\geq 11.
		\end{cases}\]
	\end{lemma}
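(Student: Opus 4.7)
The plan is to unwind the defining congruence $u\equiv(a-\ell)\pmod{r}$ with $0\leq u<r$, substitute the explicit values of $\ell$ and $r$ from Lemmas~\ref{lem12tri} and \ref{lem13tri}, and then reduce everything to a handful of elementary Fibonacci identities. The small values $n\in\{4,6,7,9\}$ will be handled separately by direct numerical verification, since these are precisely the cases where the general formula has not yet stabilised (the indices $n-6$ or $n-9$ would otherwise reach into the initial segment of the Fibonacci sequence).

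For even $n\geq 8$, Lemmas~\ref{lem12tri} and \ref{lem13tri} give $a=f_n^2$, $\ell=f_nf_{n-3}+1$, and $r=\ell=f_nf_{n-3}+1$, so
\[
a-\ell = f_n^2-f_nf_{n-3}-1 = 2f_nf_{n-2}-1,
\]
using $f_n-f_{n-3}=2f_{n-2}$. It then suffices to verify $a-\ell = 3r + (f_nf_{n-6}-4)$, which collapses to the identity
\[
2f_{n-2} = 3f_{n-3}+f_{n-6}.
\]
This follows from $2f_{n-2}=2f_{n-3}+2f_{n-4}$ together with $2f_{n-4}=f_{n-3}+f_{n-6}$ (itself an instance of $f_{n-4}=f_{n-5}+f_{n-6}$). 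The remainder condition $0\leq f_nf_{n-6}-4<f_nf_{n-3}+1$ is immediate.

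For odd $n\geq 11$, Lemma~\ref{lem12tri} gives $a-\ell=f_n^2-2f_nf_{n-2}-1=f_nf_{n-3}-1$ (using $f_n-2f_{n-2}=f_{n-1}-f_{n-2}=f_{n-3}$), and Lemma~\ref{lem13tri} gives $r=f_nf_{n-6}+4$. The target becomes $a-\ell = 4r+(f_nf_{n-9}-17)$, i.e.\ the Fibonacci identity
\[
f_{n-3} = 4f_{n-6}+f_{n-9}.
\]
Applying the recurrence twice yields $f_{n-3}=3f_{n-6}+2f_{n-7}$, and then $2f_{n-7}=f_{n-6}+f_{n-9}$ (which comes from $f_{n-7}=f_{n-8}+f_{n-9}$ combined with $f_{n-6}=f_{n-7}+f_{n-8}$) completes the step. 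The bound $0\leq f_nf_{n-9}-17<f_nf_{n-6}+4$ is again immediate for $n\geq 11$. No deep obstacle is anticipated: the main bookkeeping is simply to keep the transient cases $n=4,6,7,9$ cleanly separated from the stabilised general formula and to confirm the right quotient in the division $a-\ell=kr+u$ (namely $k=3$ in the even branch and $k=4$ in the odd branch).
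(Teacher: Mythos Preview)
Your proof is correct and follows essentially the same route as the paper: handle the small cases $n\in\{4,6,7,9\}$ by direct computation, then for the general case write $a-\ell$ explicitly, divide by $r$ with quotient $3$ (even branch) or $4$ (odd branch), and verify the remainder bounds. The only cosmetic difference is that the paper invokes the identity $f_m=4f_{m-3}+f_{m-6}$ directly, whereas you first simplify $a-\ell$ and then derive the equivalent identities $2f_{n-2}=3f_{n-3}+f_{n-6}$ and $f_{n-3}=4f_{n-6}+f_{n-9}$.
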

	
	\begin{proof}
		If $n\in\{4,6,7,9\}$, then we deduce the result by direct calculation.
		
		If $n\geq8$ is even, then
		\[ f_n^2 - (f_nf_{n-3}+1) = 4f_nf_{n-3} + f_nf_{n-6} - (f_nf_{n-3}+1) = 3(f_nf_{n-3}+1) +(f_nf_{n-6} - 4), \]
		and the result follows from $0 < f_nf_{n-6} - 4 < f_nf_{n-3}+1$.
		
		If $n\geq11$ is odd, then
		\[ f_n^2 - (2f_nf_{n-2}+1) = f_nf_{n-3} - 1 = 4(f_nf_{n-6}+4) +(f_nf_{n-9} - 17). \]
		Since $0 < f_nf_{n-9} - 17 < f_nf_{n-6}+4$, the result is proved.
	\end{proof}
	
	\begin{claim}\label{claim16tri}
		If $n\geq 4$ is even, then
		\[ \mu' = \begin{cases} f_{n-3}-1, \mbox{ if } n\equiv 1 \pmod{3}, \\ \frac{f_{n-3}}{2}-1, \mbox{ if } n\equiv 0 \pmod{3}, \\ \frac{f_{n-2}}{2}-1, \mbox{ if } n\equiv 2 \pmod{3}.
		\end{cases}\]
		
		If $n\geq 7$ is odd, then
		\[ \mu' = \begin{cases} f_{n-6}-1, \mbox{ if } n\equiv 1 \pmod{3}, \\ \frac{f_{n-6}}{2}-1, \mbox{ if } n\equiv 0 \pmod{3}, \\ \frac{f_{n-5}}{2}-1, \mbox{ if } n\equiv 2 \pmod{3}.
		\end{cases}\]
	\end{claim}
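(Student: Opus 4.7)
The plan is to translate the defining condition of $\mu'$ into an arithmetic inequality on the sequence $y_i := (a-\ell)i \bmod r$, and then to pinpoint the first failure using Fibonacci identities. A direct algebraic manipulation using the defining relation $q(a-\ell) + r = a$ yields $B/A - (a-\ell-r)/r = ca/(Ar)$. Setting $x_i := r - y_i$, this gives the equivalence
\[
\left\lfloor \frac{Bi}{A} \right\rfloor = \left\lfloor \frac{(a-\ell-r)i}{r} \right\rfloor \iff \frac{y_i}{r} + \frac{cai}{Ar} < 1 \iff cai < Ax_i,
\]
so $\mu'$ equals the largest $m$ with $cai < Ax_i$ for every $i \in \{1,\ldots,m\}$. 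Since $b\ell \equiv c \pmod a$ and $\gcd(a,b) = \gcd(a,c) = 1$, one also has $\gcd(u,r) = \gcd(a,\ell) = 1$, so $\{y_i\}_{i=0}^{r-1}$ is a permutation of $\{0,1,\ldots,r-1\}$.

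After handling the small cases $n \in \{4, 6, 7, 9\}$ by direct computation, for $n \geq 8$ even and $n \geq 11$ odd Lemma~\ref{lem15tri} gives $u = f_n f_{n-6} - 4$ or $u = f_n f_{n-9} - 17$, whence $a-\ell = 3r + u$ or $a-\ell = 4r + u$ respectively. Using the Fibonacci identity $f_n = 4 f_{n-3} + f_{n-6}$ (a special case of the convolution $f_n = f_6 f_{n-5} + f_5 f_{n-6}$), together with $f_n f_{n-3} \equiv -1 \pmod r$ in the even case (and its index-shifted odd-$n$ analogue), one obtains a closed form for $y_i$ at the claimed $\mu'+1$ in each of the six subcases. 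As an illustration, in the even $n \equiv 1 \pmod 3$ subcase,
\[
u \cdot f_{n-3} = (f_n f_{n-6} - 4)\, f_{n-3} = f_{n-6}(r-1) - 4 f_{n-3} = f_{n-6}\, r - f_n,
\]
so $y_{f_{n-3}} = r - f_n$. Parallel closed forms hold in the other five subcases.

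With these closed forms, the failure at $i = \mu'+1$ reduces to a single polynomial inequality in Fibonacci numbers; in the running example it reads $f_{n-3} f_{n+2}^2 \geq (f_n f_{n-1} - 2) f_n$, which follows by expanding $f_{n+2} = 2 f_n + f_{n-1}$ and applying Cassini-type identities. The analogous inequality in each remaining subcase is proved by the same elementary manipulations.

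The principal obstacle, and the reason the statement is kept as a Claim, is the converse direction: verifying $cai < Ax_i$ for every $i \in \{1, \ldots, \mu'\}$. The plan would exploit the proximity of $u/r$ to $\phi^{-3} = \sqrt{5} - 2$, whose continued fraction is the purely periodic $[0; \overline{4}]$, so that the continued fraction of $u/r$ has partial quotients all equal to $4$ apart from a bounded number of exceptions at the ends. By the three-distance theorem, the record highs of $y_i$ on $\{1,\ldots,\mu'\}$ occur at the denominators of convergents of $u/r$, each of which is a Fibonacci-type combination; at each such $i$ the required inequality reduces once more to a Fibonacci polynomial identity. A cleaner alternative would be strong induction on $n$ in steps of $6$, reducing the orbit structure at $n$ to that at $n-6$ via the Fibonacci recurrence plus a short boundary check. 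Either way, the combinatorics of the orbit of $u$ modulo $r$ is where the real work lies.
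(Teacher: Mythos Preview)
The paper offers no proof of this statement at all: it is deliberately labelled a \emph{Claim}, and the authors say in the introduction and in the Conclusions that they ``have not been able to provide rigorous proofs'' for the Claims in Subsection~\ref{tri-app}, adding that for these they ``currently have no ideas about possible tools to use''. So there is nothing to compare your attempt against on the paper's side.

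Your reformulation is sound. The identity $Br - A(a-\ell-r) = c\bigl(r+q(a-\ell)\bigr) = ca$ is correct, and it does yield the clean criterion $cai < Ax_i$ with $x_i = r - \bigl((a-\ell)i \bmod r\bigr)$; the worked example in the even $n\equiv 1\pmod 3$ case (using $f_n = 4f_{n-3}+f_{n-6}$ to get $u f_{n-3} \equiv -f_n \pmod r$, hence $x_{f_{n-3}} = f_n$, and then the failure inequality $f_{n-3}f_{n+2}^2 \geq (f_nf_{n-1}-2)f_n$) checks out, noting that for even $n$ one has $q=1$, $r=\ell$, and $A = b\ell - c = f_n^2(f_nf_{n-1}-2)$ by Lemma~\ref{lem04rar}. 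This already goes further than the paper.

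You are also candid that the real content---verifying $cai < Ax_i$ for every $i\le \mu'$---is not done, and that this is why the statement remains a Claim. That assessment is accurate: without controlling all the intermediate record lows of $x_i$ (equivalently, record highs of $y_i$), the argument is only one-sided. Your two suggested attacks (three-distance/continued-fraction analysis of $u/r\approx \phi^{-3}$, or strong induction in steps of $6$) are reasonable, and in fact the continued-fraction route is exactly the tool the authors conjecture will be needed for the analogous unproven claims in Section~\ref{rar}. But as written your proposal is a plan, not a proof; the gap you name is genuine and is precisely the one the paper leaves open.
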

	
	\begin{corollary}\label{cor17tri}
		It holds that:
		\begin{itemize}
			\item If $n\in\{4,6,8\}$, then $\mu'<\left\lfloor \frac{r}{u} \right\rfloor$, and therefore we can apply Theorem~\ref{thm08tri5}(a).
			\item If $n\geq 10$ is even, then $\mu'>\left\lfloor \frac{r}{u} \right\rfloor$, and therefore we can apply Theorem~\ref{thm08tri5}(b).
			\item If $n\in\{7,9,11\}$, then $\mu'<\left\lfloor \frac{r}{u} \right\rfloor$, and therefore we can apply Theorem~\ref{thm08tri5}(a).
			\item If $n\geq13$ is odd, then $\mu'>\left\lfloor \frac{r}{u} \right\rfloor$, and therefore we can apply Theorem~\ref{thm08tri5}(b).
		\end{itemize}
	\end{corollary}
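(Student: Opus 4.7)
The plan is, for each $n$, to substitute into the inequality $\mu' \lessgtr \lfloor r/u\rfloor$ the explicit values provided by Lemmas~\ref{lem13tri} and \ref{lem15tri} and Claim~\ref{claim16tri}, and check the required inequality. The hypotheses $\ell>\kappa$ and $br>cq$ that Theorem~\ref{thm08tri5} demands are already supplied by Lemma~\ref{lem12tri} and Corollary~\ref{cor14tri}, so only the dichotomy (a) vs.~(b) has to be resolved. For the six exceptional values $n\in\{4,6,7,8,9,11\}$, I would simply tabulate $(r,u,\mu')$ by direct computation; in each case $\mu'\in\{0,3\}$ while $\lfloor r/u\rfloor\in\{1,4,6\}$, and the strict inequality $\mu'<\lfloor r/u\rfloor$ is immediate.

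For the two generic ranges, the key ingredient is the Fibonacci identity $f_{n-3}=4f_{n-6}+f_{n-9}$ (a specialisation of $f_{n+k}-(-1)^k f_{n-k}=L_k f_n$ at $k=3$, $L_3=4$), which I would first record and then apply in the appropriate shift. For even $n\geq 10$, Lemmas~\ref{lem13tri} and \ref{lem15tri} give $r=f_nf_{n-3}+1$ and $u=f_nf_{n-6}-4$; substituting the identity,
\[ r-4u \;=\; f_n(f_{n-3}-4f_{n-6})+17 \;=\; f_nf_{n-9}+17 \;>\; 0, \]
\[ 5u-r \;=\; f_n(f_{n-6}-f_{n-9})-21 \;>\; 0, \]
where the last estimate holds because $f_n\geq 55$ and $f_{n-6}-f_{n-9}\geq 2$ for $n\geq 10$. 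Hence $\lfloor r/u\rfloor=4$. An analogous computation for odd $n\geq 13$, with $r=f_nf_{n-6}+4$ and $u=f_nf_{n-9}-17$, yields $r-4u=f_nf_{n-12}+72>0$ and $5u-r=f_n(f_{n-9}-f_{n-12})-89>0$ (since $f_n\geq 233$ and $f_{n-9}-f_{n-12}\geq 2$ for $n\geq 13$), so again $\lfloor r/u\rfloor=4$.

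Finally, Claim~\ref{claim16tri} yields $\mu'\geq 12$ throughout both generic ranges (the minimum is attained at $n=10$ and at $n=13$, where in each case $\mu'=f_7-1=12$, while the remaining residue classes modulo~$3$ give strictly larger values), so $\mu'>\lfloor r/u\rfloor=4$ and we land in case~(b) of Theorem~\ref{thm08tri5}. The only potential obstacle is the bookkeeping, since three residue classes modulo~$3$ and two parities have to be tracked simultaneously; once the shifted Fibonacci identity is set up, however, every step reduces to a routine Fibonacci-growth estimate.
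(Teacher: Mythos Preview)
Your argument is correct and, in the generic ranges, sharper than the paper's. The paper handles the six exceptional values exactly as you do (direct computation), but for even $n\geq 10$ it only derives the crude bound $\lfloor r/u\rfloor\leq 12$ by writing $r=3u+(2f_nf_{n-7}+13)$ and estimating the quotient of the remainder by $u$; it then pairs this with the claim $\mu'>13$ (which is in fact off by one at $n=10$, where $\mu'=12$). Your use of the identity $f_{n-3}=4f_{n-6}+f_{n-9}$ pins down $\lfloor r/u\rfloor=4$ exactly and makes the comparison $\mu'\geq 12>4$ clean and uniform, with the analogous computation for odd $n\geq 13$ equally transparent. So the two proofs share the same architecture, but yours avoids the loose estimate and the attendant boundary issue.

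One small correction: the general identity you quote in parentheses has the wrong sign. The standard form is $f_{m+k}+(-1)^k f_{m-k}=L_k f_m$, which at $k=3$ gives $f_{m+3}-f_{m-3}=4f_m$, i.e.\ exactly your $f_{n-3}=4f_{n-6}+f_{n-9}$. Your version $f_{n+k}-(-1)^k f_{n-k}=L_k f_n$ would instead yield $f_{n+3}+f_{n-3}=4f_n$, which is false. Since you state and use the correct specialisation, this does not affect the proof, but the parenthetical should be fixed.
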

	
	\begin{proof}
		If $n\in\{4,6,7,8,9,11\}$, we deduce the result by direct calculation.
		
		For $n\geq10$ even, we have that $\mu'>13$. Moreover,
		\[ \left\lfloor \frac{r}{u} \right\rfloor = \left\lfloor \frac{f_nf_{n-3}+1}{f_nf_{n-6}-4} \right\rfloor = 3+\left\lfloor \frac{2f_nf_{n-7}+13}{f_nf_{n-6}-4} \right\rfloor. \]
		Now,
		\[ 10 > \frac{2f_nf_{n-7}+13}{f_nf_{n-6}-4} \Longleftrightarrow 10f_nf_{n-6} > 2f_nf_{n-7}+53, \]
		which is true since $8f_nf_{n-6} > 53$ for all $n\geq10$.		
		
		For $n\geq13$ odd, we have that $\mu'>12$. Moreover,
		\[ \left\lfloor \frac{r}{u} \right\rfloor = \left\lfloor \frac{f_nf_{n-6}+4}{f_nf_{n-9}-17} \right\rfloor = 3+\left\lfloor \frac{2f_nf_{n-10}+55}{f_nf_{n-9}-17} \right\rfloor. \]
		Now,
		\[ 9 > \frac{2f_nf_{n-10}+55}{f_nf_{n-9}-17} \Longleftrightarrow 9f_nf_{n-9} > 2f_nf_{n-10}+208 \]
		which is true since $7f_nf_{n-9} > 208$ for all $n\geq13$.
	\end{proof}
	
	\begin{claim}\label{claim18tri}
		If $n\geq 10$ is even, then the applicable values in Theorem~\ref{thm08tri5}(b) are
		\[ \begin{cases} d_1 = \frac{f_{n-1}}{2},\, d_2 = \frac{f_{n+2}}{2},\, p_1 = f_{n-3}-1,\, p_2 = \frac{f_{n-3}+f_{n-5}}{2}-1, \mbox{ if } n\equiv 1 \pmod{3}, \\[3pt] d_1 = f_{n+1},\, d_2 = \frac{f_{n+3}}{2},\, p_1 = \frac{f_{n-3}}{2}-1,\, p_2 = f_{n-5}-1, \mbox{ if } n\equiv 0 \pmod{3}, \\[3pt] d_1 = \frac{f_{n+1}}{2},\, d_2 = f_{n+2},\, p_1 = \frac{f_{n-2}}{2}-1,\, p_2 = \frac{f_{n-5}}{2}-1, \mbox{ if } n\equiv 2 \pmod{3}.
		\end{cases}\]
		
		If $n\geq 13$ is odd, then the applicable values in Theorem~\ref{thm08tri5}(b) are
		\[ \begin{cases} d_1 = \frac{f_{n-1}}{2},\, d_2 = \frac{f_{n+2}}{2},\, p_1 = f_{n-6}-1,\, p_2 = \frac{f_{n-6}+f_{n-8}}{2}-1, \mbox{ if } n\equiv 1 \pmod{3}, \\[3pt] d_1 = f_{n+1},\, d_2 = \frac{f_{n+3}}{2},\, p_1 = \frac{f_{n-6}}{2}-1,\, p_2 = f_{n-8}-1, \mbox{ if } n\equiv 0 \pmod{3}, \\[3pt] d_1 = \frac{f_{n+1}}{2},\, d_2 = f_{n+2},\, p_1 = \frac{f_{n-5}}{2}-1,\, p_2 = \frac{f_{n-8}}{2}-1, \mbox{ if } n\equiv 2 \pmod{3}.
		\end{cases}\]
	\end{claim}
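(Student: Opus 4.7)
The plan is to exploit the structure of the orbit $\{(a-\ell)i \pmod{r} \mid 0 \leq i \leq \mu'\}$ via the Steinhaus (three-gap) theorem combined with the continued-fraction expansion of $u/r$, specialised using the Fibonacci identities that govern our parameters. I would begin by fixing the even case $n = 6n_0+4$ (so $n \equiv 1 \pmod 3$) as the template. From Lemmas~\ref{lem12tri}, \ref{lem13tri} and~\ref{lem15tri} we have $r = \ell = f_n f_{n-3} + 1$, $a - \ell = f_n^2 - f_n f_{n-3} - 1$ and $u = f_n f_{n-6} - 4$; the identity $f_n = 4 f_{n-3} + f_{n-6}$ already explains the first partial quotient computed in Corollary~\ref{cor17tri}. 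Continuing the Euclidean algorithm on $(u,r)$ and rewriting each successive remainder in the form $\alpha f_n f_{n-k} + \beta$ via Cassini-type identities, I would read off the full continued-fraction expansion of $u/r$ and its convergents.

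Next, I would invoke the Steinhaus three-gap theorem on the rotation $i \mapsto u i \pmod r$: at every step, the gaps between consecutive visited points take at most three distinct values, and two of them coincide with the denominators of consecutive convergents of $u/r$. Because the hypothesis $\mu' > \lfloor r/u \rfloor$ guarantees that the smallest gap has already appeared, only two gap sizes survive, which establishes Claim~\ref{claim05tri} in this family and identifies $\{d_1, d_2\}$ as specific convergent data. The explicit formulas from the previous step should match the claimed values $\{\tfrac{f_{n-1}}{2}, \tfrac{f_{n+2}}{2}\}$ in the $n \equiv 1 \pmod 3$ even subcase, and analogously for the two remaining residues modulo $3$ as well as the three odd subcases (where $u$ and $r$ shift by a few Fibonacci indices according to Lemmas~\ref{lem13tri} and~\ref{lem15tri}).

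With $d_1, d_2$ in hand, the index $p_i$ is characterised by the condition that $x_{p_i} + d_i \in \mathcal{X}$: this translates into the existence of $j \in \{0, \dots, \mu'\}$ with $u j \equiv u p_i - d_i \pmod r$, so that $p_i = \mu' - j_i$ (possibly shifted by one) for $j_i$ the unique solution of $u j_i \equiv d_i \pmod r$. I would compute $j_i$ explicitly from the convergent structure and then simplify the resulting Fibonacci expression, using $3$-periodicity of the Fibonacci parity sequence to justify the halved Fibonacci numbers appearing in the claim.

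The main obstacle will be the sheer bookkeeping: proving rigorously that the Euclidean algorithm on $(u, r)$ produces the predicted sequence of partial quotients in every residue class of $n$ modulo $6$, and tracking parity and integrality of the halved Fibonacci entries throughout the six subcases. A pragmatic fallback, should the direct route prove unwieldy, is to verify the claim a posteriori: for each subcase, check that the displayed $(d_i, p_i)$ satisfy $x_{p_i} + d_i \in \mathcal{X}$ while $x_{p_i+1} + d_i \notin \mathcal{X}$, and that $d_1, d_2$ exhaust $\mathcal{X}_{dist}$; this reduces the proof to a finite list of Fibonacci identity checks per subcase, which can be discharged by induction on $n_0$ using Cassini's identity and the three-step recurrence $f_{n} = 4 f_{n-3} + f_{n-6}$.
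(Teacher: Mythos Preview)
The paper does not prove this statement. Claim~\ref{claim18tri} is one of the results the authors explicitly flag as unproven: in the introduction they write that ``several results are presented as `Claim' because we have not been able to provide rigorous proofs for them,'' and in the conclusions they list the claims of Subsection~\ref{tri-app} as future work, adding that they ``have no ideas about possible tools to use.'' So there is no argument in the paper to compare your proposal against; the authors rely only on computational evidence and on the agreement of the final answer with the independently proven Corollary~\ref{cor06rar} and Theorem~\ref{thm17rgs}.

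Your outline therefore goes strictly beyond the paper. The strategy you sketch---running the Euclidean algorithm on $(u,r)$ to extract the continued-fraction expansion, invoking the three-gap theorem to pin down $\{d_1,d_2\}$ as convergent data, and then solving a congruence to locate $p_1,p_2$---is a plausible line of attack and is in the spirit of the R{\o}dseth/Greenberg machinery underlying Section~\ref{rar}. That said, what you have written is a plan rather than a proof: the crucial step, namely showing that the partial quotients of $u/r$ stabilise to the predicted pattern uniformly in each residue class of $n$ modulo~$6$, is asserted but not carried out, and this is precisely the kind of bookkeeping the authors were unable to complete for the analogous Claim~\ref{claim05rar}. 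Your fallback verification route (checking $x_{p_i}+d_i\in\mathcal{X}$ and $x_{p_i+1}+d_i\notin\mathcal{X}$ directly) is more concrete, but it still presupposes that $\mathcal{X}_{dist}$ has exactly the two announced elements, which is Claim~\ref{claim05tri} and is itself unproven in the paper. Until one of these two routes is executed in full for at least one subcase, the proposal remains a promising sketch rather than a proof.
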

	
	\begin{remark}\label{rem19tri}
		We have that $p_1=\mu'$.
	\end{remark}
	
	\begin{corollary}\label{cor20tri}
		Regarding Theorem~\ref{thm08tri5}(b), it holds that:
		\begin{enumerate}
			\item If $n=3k_0+1$ with $k_0\in\mathbb{N}\setminus\{0\}$, then
			\begin{itemize}
				\item $y_{p_1}=f_n-4$, $y_{p_2}=\frac{f_n+f_{n-2}}{2}-4$, if $n\geq10$ is even,
				\item $y_{p_1}=f_n-17$, $y_{p_2}=\frac{f_n+f_{n-2}}{2}-17$, if $n\geq13$ is odd,
				\item $\mathrm{PF}(S(n)) = \left\{ \left( \frac{f_{n-1}}{2}-1\right)f_{n+1}^2 + (f_n-1)f_{n+2}^2, \left(\frac{f_{n+2}}{2}-1\right)f_{n+1}^2 + \left(\frac{f_n+f_{n-2}}{2}-1\right)f_{n+2}^2 \right\} - f_n^2$.
			\end{itemize}

			\item If $n=3k_0+3$ with $k_0\in\mathbb{N}\setminus\{0\}$, then
			\begin{itemize}
				\item $y_{p_1}=\frac{f_n}{2}-4$, $y_{p_2}=f_{n-2}-4$, if $n\geq12$ is even,
				\item $y_{p_1}=\frac{f_n}{2}-17$, $y_{p_2}=f_{n-2}-17$, if $n\geq15$ is odd,
				\item $\mathrm{PF}(S(n)) = \left\{ (f_{n+1}-1)f_{n+1}^2 + \left(\frac{f_n}{2}-1\right)f_{n+2}^2, \left(\frac{f_{n+3}}{2}-1\right)f_{n+1}^2 + (f_{n-2}-1)f_{n+2}^2 \right\} - f_n^2$.
			\end{itemize}
			
			\item If $n=3k_0+2$ with $k_0\in\mathbb{N}\setminus\{0\}$, then
			\begin{itemize}
				\item $y_{p_1}=\frac{f_{n+1}}{2}-4$, $y_{p_2}=\frac{f_{n-2}}{2}-4$, if $n\geq14$ is even,
				\item $y_{p_1}=\frac{f_{n+1}}{2}-17$, $y_{p_2}=\frac{f_{n-2}}{2}-17$, if $n\geq17$ is odd,
				\item $\mathrm{PF}(S(n)) = \left\{ \left(\frac{f_{n+1}}{2}-1\right)f_{n+1}^2 + \left(\frac{f_{n+1}}{2}-1\right)f_{n+2}^2, (f_{n+2}-1)f_{n+1}^2 + \left(\frac{f_{n-2}}{2}-1\right)f_{n+2}^2 \right\} - f_n^2$.
			\end{itemize}
		\end{enumerate}
		Moreover, in any case, $\mathrm{F}(S(n)) = \max(\mathrm{PF}(S(n)))$.
	\end{corollary}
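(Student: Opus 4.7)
The plan is to plug the values from Claim~\ref{claim18tri} (for $d_i, p_i$), Lemma~\ref{lem13tri} (for $q, r$), and Lemma~\ref{lem15tri} (for $u$) into Theorem~\ref{thm08tri5}(b), and to simplify using standard Fibonacci identities. The small cases not covered by Theorem~\ref{thm08tri5}(b)---namely $n\in\{4,6,8\}$ on the even branch and $n\in\{7,9,11\}$ on the odd branch, plus $n=5$, which instead falls under Theorem~\ref{thm02tri3} by Corollary~\ref{cor14tri}---will be dispatched by direct substitution into the appropriate theorem and comparison with the asserted closed forms.

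For the main argument, in each of the six parity--residue sub-cases, I would first evaluate
\[
y_{p_i} = q\!\left(\left\lfloor \tfrac{(a-\ell)p_i}{r}\right\rfloor + 1\right) + p_i, \qquad i=1,2,
\]
by exploiting the division-algorithm decomposition $a-\ell=\alpha r+u$ extracted from the proof of Lemma~\ref{lem15tri} (with $\alpha=3$ for even $n\geq 8$ and $\alpha=4$ for odd $n\geq 11$). This yields $\lfloor (a-\ell)p_i/r\rfloor = \alpha p_i + \lfloor up_i/r\rfloor$, and the remaining floor is computed using Cassini's identity $f_{n+1}f_{n-1}-f_n^2=(-1)^n$ together with the explicit form of $p_i$ from Claim~\ref{claim18tri}, producing the claimed values of $y_{p_i}$. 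The same decomposition also gives $\lfloor(a-\ell-1)/r\rfloor=\alpha$ (since $0\leq u-1<r$), so the correction term $cq\lfloor(a-\ell-1)/r\rfloor$ equals $cq\alpha$; adding it to $cy_{p_i}$ absorbs cleanly into the coefficient of $f_{n+2}^2$ displayed in each of the two stated pseudo-Frobenius numbers.

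Once the two candidate expressions $w_i := b(d_i-1)+cy_{p_i}+cq\lfloor(a-\ell-1)/r\rfloor$ have been rewritten in the form asserted for the elements of $\mathrm{PF}(S(n))+f_n^2$, the conclusion follows from combining Proposition~\ref{prop04} (which forces $\mathrm{t}(S(n))=2$, hence $|\mathrm{PF}(S(n))|=2$), Proposition~\ref{prop03} (so that the two maximal elements of $\mathrm{Ap}(S(n),f_n^2)$ shifted by $-f_n^2$ are exactly the pseudo-Frobenius numbers), and Proposition~\ref{prop01} (giving $\mathrm{F}(S(n))=\max(\mathrm{Ap}(S(n),f_n^2))-f_n^2=\max(\mathrm{PF}(S(n)))$). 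The main obstacle will be the bookkeeping: each of the six parity--residue sub-cases requires a separate evaluation of $\lfloor up_i/r\rfloor$ with repeated use of Fibonacci recurrences, and one must verify that Tripathi's formula genuinely supplies \emph{both} maximal Apéry elements (not merely the larger one), so that the two-element set produced is exactly $\mathrm{PF}(S(n))$ rather than a strict subset.
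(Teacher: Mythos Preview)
The paper gives no proof for this corollary; it is stated immediately after Claim~\ref{claim18tri} (itself left unproven) as the outcome of substituting those claimed values of $d_i,p_i$ into Theorem~\ref{thm08tri5}(b). Your plan---plug in, use the decomposition $a-\ell=\alpha r+u$ from the proof of Lemma~\ref{lem15tri} to evaluate the floors, and simplify via Fibonacci identities---is precisely the intended route and is correct for the $y_{p_i}$ values and for $\mathrm{F}(S(n))$.

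Two remarks. First, the small cases $n\in\{4,5,6,7,8,9,11\}$ lie outside the scope of this corollary: its header reads ``Regarding Theorem~\ref{thm08tri5}(b)'', and by Corollary~\ref{cor17tri} that part of the theorem applies only for even $n\geq 10$ and odd $n\geq 13$. You need not treat those values here. Second, your caveat about $\mathrm{PF}(S(n))$ is well taken and exposes a genuine gap in the paper's own presentation: Theorem~\ref{thm08tri5}(b) asserts only that $\mathrm{F}$ equals the \emph{maximum} of the two displayed quantities, not that both are pseudo-Frobenius numbers. The paper does not justify the two-element $\mathrm{PF}$ claim within Section~\ref{tri}; its confidence rests on the agreement with Section~\ref{rgs} (Theorem~\ref{thm17rgs}), where the full Ap\'ery set is described and Proposition~\ref{prop03} then yields $\mathrm{PF}$ rigorously. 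Your appeal to Propositions~\ref{prop03} and~\ref{prop04} is the right instinct, but to close the loop from Tripathi's side alone you would still need to argue that both candidates are maximal in $\mathrm{Ap}(S(n),f_n^2)$ under $\leq_{S(n)}$, which Theorem~\ref{thm08tri5}(b) by itself does not supply.
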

	
	With arguments similar to those in Corollary~\ref{cor14tri}, we establish the following result.
	
	\begin{corollary}\label{cor21tri}
		Let $n\geq4$.
		\begin{itemize}
			\item If $n\in\{4,6\}$, then $b(\ell-\overline{r})<c(\overline{q}+1)$, and therefore we can apply Theorem~\ref{thm03tri4}.
			\item If $n\geq8$ is even, then $b(\ell-\overline{r})>c(\overline{q}+1)$, and therefore we can apply Theorem~\ref{thm11tri6}.
			\item If $n\geq5$ is odd, then $b(\ell-\overline{r})>c(\overline{q}+1)$, and therefore we can apply Theorem~\ref{thm11tri6}.
		\end{itemize}
	\end{corollary}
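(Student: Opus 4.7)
The plan is to follow the template of Corollary~\ref{cor14tri}: verify the finitely many exceptional cases $n\in\{4,6\}$ by direct substitution of the values given by Lemmas~\ref{lem12tri} and~\ref{lem13tri} (two one-line numerical checks confirming $b(\ell-\overline{r})<c(\overline{q}+1)$), and then treat the two remaining infinite families by Fibonacci algebra.

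For even $n\geq 8$, Lemmas~\ref{lem12tri} and~\ref{lem13tri} give $\ell=f_nf_{n-3}+1$, $\overline{q}=4$, and $\overline{r}=f_nf_{n-6}-4$, so $\ell-\overline{r}=f_n(f_{n-3}-f_{n-6})+5=2f_nf_{n-5}+5$, where the last equality uses the identity $f_{n-3}-f_{n-6}=2f_{n-5}$ (two applications of the Fibonacci recurrence). The inequality $b(\ell-\overline{r})>c(\overline{q}+1)$ then reads $f_{n+1}^2(2f_nf_{n-5}+5)>5f_{n+2}^2$; using $f_{n+2}^2-f_{n+1}^2=f_n f_{n+3}$ it reduces to $2f_{n+1}^2f_{n-5}>5f_{n+3}$, and since $f_{n+3}<3f_{n+1}$ it suffices to show $2f_{n+1}f_{n-5}>15$, which is immediate for $n\geq 8$.

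For odd $n\geq 5$, the same lemmas give $\ell=2f_nf_{n-2}+1$, $\overline{q}=1$, and $\overline{r}=f_nf_{n-3}-1$, so $\ell-\overline{r}=f_n(2f_{n-2}-f_{n-3})+2=f_n(f_{n-2}+f_{n-4})+2$. The target inequality $f_{n+1}^2(f_n(f_{n-2}+f_{n-4})+2)>2f_{n+2}^2$, via the same simplification $f_{n+2}^2-f_{n+1}^2=f_n f_{n+3}$, becomes $f_{n+1}^2(f_{n-2}+f_{n-4})>2f_{n+3}$, which is trivial for all $n\geq 5$. There is no real obstacle here beyond bookkeeping the small-index Fibonacci identities; once the substitutions from Lemmas~\ref{lem12tri} and~\ref{lem13tri} are in place, the proof is routine and parallels that of Corollary~\ref{cor14tri}.
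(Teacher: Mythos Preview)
Your proposal is correct and follows exactly the approach the paper indicates: the paper itself gives no details for Corollary~\ref{cor21tri}, merely stating that it is established ``with arguments similar to those in Corollary~\ref{cor14tri}'', and your argument is precisely such a parallel---substituting the values of $\ell,\overline{q},\overline{r}$ from Lemmas~\ref{lem12tri} and~\ref{lem13tri}, simplifying via $f_{n+2}^2-f_{n+1}^2=f_nf_{n+3}$, and reducing to an elementary Fibonacci inequality, with the small cases $n\in\{4,6\}$ handled numerically. The Fibonacci manipulations (e.g., $f_{n-3}-f_{n-6}=2f_{n-5}$, $2f_{n-2}-f_{n-3}=f_{n-2}+f_{n-4}$, $f_{n+3}<3f_{n+1}$) are all correct, and the resulting bounds hold from the stated starting indices.
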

	
	With arguments similar to those in Lemma~\ref{lem15tri}, we establish the following result.
	
	\begin{lemma}\label{lem22tri}
		If $n\geq8$ is even, then $\overline{u} = \overline{r} =f_nf_{n-6}-4$.
		
		If $n\geq 5$ is odd, then $\overline{u} = \overline{r} =f_nf_{n-3}-1$.
	\end{lemma}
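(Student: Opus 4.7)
The plan is to observe that the lemma packages two assertions: (i) explicit values of $\overline{r}$, which are already provided by Lemma~\ref{lem13tri}, and (ii) the identity $\overline{u} = \overline{r}$. By the definition of $\overline{u}$ as the residue of $\overline{r}$ modulo $\ell - \overline{r}$ in the range $[0,\ell - \overline{r})$, the claim $\overline{u} = \overline{r}$ is equivalent to the single inequality $\ell > 2\overline{r}$. So the entire proof reduces to checking that inequality in each parity case and then reading off the values of $\overline{r}$ from Lemma~\ref{lem13tri}.

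For even $n \geq 8$, I would substitute $\ell = f_n f_{n-3}+1$ from Lemma~\ref{lem12tri} and $\overline{r} = f_n f_{n-6}-4$ from Lemma~\ref{lem13tri} to get
\[ \ell - 2\overline{r} = f_n(f_{n-3} - 2f_{n-6}) + 9. \]
Then I would apply the Fibonacci recurrence $f_{n-3} = 2f_{n-5} + f_{n-6}$ to rewrite the bracket as $2f_{n-5} - f_{n-6} = f_{n-5} + f_{n-7}$, which is strictly positive for $n \geq 8$. Hence $\ell > 2\overline{r}$ and therefore $\overline{u} = \overline{r} = f_n f_{n-6}-4$, as claimed.

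For odd $n \geq 5$, the analogous substitution of $\ell = 2f_n f_{n-2}+1$ and $\overline{r} = f_n f_{n-3}-1$ gives
\[ \ell - 2\overline{r} = 2f_n(f_{n-2} - f_{n-3}) + 3 = 2f_n f_{n-4} + 3, \]
which is manifestly positive. So again $\overline{u} = \overline{r} = f_n f_{n-3}-1$.

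No real obstacle arises: the argument is a one-step modular reduction in each case, closely parallel to the calculations carried out in the proof of Lemma~\ref{lem15tri}. The only mild care needed is in the Fibonacci-recurrence manipulation for the even case, where one must unfold $f_{n-3}$ twice to see that the relevant quantity is positive.
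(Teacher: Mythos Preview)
Your proof is correct and follows essentially the same approach the paper indicates: the paper does not spell out a proof but refers to the method of Lemma~\ref{lem15tri}, which amounts to performing the Euclidean division and checking that the candidate remainder lies in the correct range. In your case the quotient happens to be zero, so the whole argument collapses to the single inequality $\ell>2\overline{r}$, which you verify cleanly in both parity cases using Lemma~\ref{lem12tri}, Lemma~\ref{lem13tri}, and standard Fibonacci recurrences.
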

	
	\begin{claim}\label{claim23tri}
		If $n\geq 8$ is even, then
		\[ \overline{\mu}' = \begin{cases} f_{n-4}-1, \mbox{ if } n\equiv 2 \pmod{3}, \\ 2f_{n-5}-1, \mbox{ if } n\equiv 1 \pmod{3}, \\ f_{n-5}-1, \mbox{ if } n\equiv 0 \pmod{3}.
		\end{cases}\]
		
		If $n\geq 5$ is odd, then
		\[ \overline{\mu}' = \begin{cases} \frac{f_{n-1}+f_{n-3}}{2}-1, \mbox{ if } n\equiv 2 \pmod{3}, \\ f_{n-2}+f_{n-4}-1, \mbox{ if } n\equiv 1 \pmod{3}, \\ \frac{f_{n-2}+f_{n-4}}{2}-1, \mbox{ if } n\equiv 0 \pmod{3}.
		\end{cases}\]
	\end{claim}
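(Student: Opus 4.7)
The plan is to determine, case by case, the smallest index at which $\lfloor \overline{B}i/\overline{A}\rfloor$ and $\lfloor \overline{r}i/(\ell-\overline{r})\rfloor$ first disagree; this smallest index is $\overline{\mu}'+1$. The starting point is the algebraic identity
\[ \overline{B}(\ell-\overline{r}) - \overline{A}\,\overline{r} \;=\; c\,(\overline{q}\ell-\overline{r}), \]
which follows by expanding $\overline{B}=b\overline{r}+c\overline{q}$ and $\overline{A}=b(\ell-\overline{r})-c(\overline{q}-1)$. Since $\overline{q}\geq 1$ and $\overline{r}<\ell$, the right-hand side is strictly positive, so $\lfloor \overline{B}i/\overline{A}\rfloor \geq \lfloor \overline{r}i/(\ell-\overline{r})\rfloor$ for every $i\geq 0$. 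Writing $s_i := \overline{r}i \bmod (\ell-\overline{r})$, the two floors coincide at index $i$ if and only if
\[ \overline{A}\,s_i + c\,i\,(\overline{q}\ell-\overline{r}) \;<\; \overline{A}\,(\ell-\overline{r}). \]

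Using Lemmas~\ref{lem12tri}, \ref{lem13tri}, and~\ref{lem22tri}, I would then rewrite $\overline{A}$, $\overline{B}$, $\overline{r}$, $\ell-\overline{r}$ and $\overline{q}\ell-\overline{r}$ in Fibonacci form; for instance, for even $n\geq 8$ one finds $\overline{r}=f_nf_{n-6}-4$ and $\ell-\overline{r}=2f_nf_{n-5}+5$, with analogous expressions in the odd case. The ratio $\overline{r}/(\ell-\overline{r})$ admits a continued-fraction expansion whose initial partial quotients depend on $n\bmod 3$, paralleling the $\kappa_i$-patterns displayed in Claim~\ref{claim05rar}; this is what forces the threefold case split. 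Using Cassini's identity $f_{n+1}f_{n-1}-f_n^2=(-1)^n$ together with the recursion $f_k=f_{k-1}+f_{k-2}$, the residues $s_i$ can be tracked explicitly through the relevant range by a short case-dependent induction.

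The proof concludes by verifying, in each of the six sub-cases, that the displayed inequality holds at $i=\overline{\mu}'$ but fails at $i=\overline{\mu}'+1$, with $\overline{\mu}'$ equal to the claimed Fibonacci value. The main obstacle will be this last step: the residue sequence $\{s_i\}$ is controlled by Fibonacci-sized denominators, so the induction must proceed on a Fibonacci-parameterized index rather than on $i$ directly, and the transition at $i=\overline{\mu}'+1$ requires tight Fibonacci inequalities. This bookkeeping is intricate, and it is presumably why the authors state this result as a Claim rather than prove it as a Theorem.
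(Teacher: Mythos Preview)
The paper does not prove Claim~\ref{claim23tri}. As stated explicitly in the Introduction and again in Section~\ref{conclusions}, all results labelled ``Claim'' in Sections~\ref{rar} and~\ref{tri} are left unproven, supported only by computational evidence and by agreement with the rigorously established results of Section~\ref{rgs}. There is therefore no paper proof to compare your proposal against.

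Your proposal is a plausible outline rather than a proof, and you are candid about this in your final paragraph. Two remarks on the outline itself. First, your algebraic identity $\overline{B}(\ell-\overline{r}) - \overline{A}\,\overline{r} = c(\overline{q}\ell-\overline{r})$ and the resulting criterion $\overline{A}\,s_i + c\,i\,(\overline{q}\ell-\overline{r}) < \overline{A}(\ell-\overline{r})$ are both correct, and your computation $\ell-\overline{r}=2f_nf_{n-5}+5$ for even $n\geq 8$ checks out. Second, there is a potential gap in your closing step: you write that it suffices to verify the inequality ``holds at $i=\overline{\mu}'$ but fails at $i=\overline{\mu}'+1$''. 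Because the term $\overline{A}\,s_i$ is not monotone in $i$ (the residues $s_i$ oscillate), this alone does not establish that the inequality holds for \emph{every} $i\leq\overline{\mu}'$, which is what the definition of $\overline{\mu}'$ requires. Your earlier remark about tracking $s_i$ ``through the relevant range by a short case-dependent induction'' is the right idea, but the final sentence understates what must be checked. The paper's authors themselves note in Section~\ref{conclusions} that they ``have no ideas about possible tools to use'' for the claims in Subsection~\ref{tri-app}; your continued-fraction approach is a reasonable candidate, but carrying it through remains open.
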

	
	With arguments similar to those in Corollary~\ref{cor17tri}, we establish the following result.
	
	\begin{corollary}\label{cor24tri}
		It holds that:
		\begin{itemize}
			\item If $n=8$, then $\overline{\mu}'<\left\lfloor \frac{\ell-\overline{r}}{\overline{u}} \right\rfloor$, and therefore we can apply Theorem~\ref{thm11tri6}(a).
			\item If $n\geq 10$, then $\overline{\mu}'>\left\lfloor \frac{\ell-\overline{r}}{\overline{u}} \right\rfloor$, and therefore we can apply Theorem~\ref{thm11tri6}(b).
			\item If $n=5$, then $\overline{\mu}'<\left\lfloor \frac{\ell-\overline{r}}{\overline{u}} \right\rfloor$, and therefore we can apply Theorem~\ref{thm11tri6}(a).
			\item If $n\geq 7$, then $\overline{\mu}'>\left\lfloor \frac{\ell-\overline{r}}{\overline{u}} \right\rfloor$, and therefore we can apply Theorem~\ref{thm11tri6}(b).
		\end{itemize}
	\end{corollary}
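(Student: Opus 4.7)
The plan is to mirror the proof of Corollary~\ref{cor17tri}: handle the boundary cases $n\in\{5,8\}$ by direct substitution, and for $n\geq 10$ even or $n\geq 7$ odd, compute $\lfloor(\ell-\overline{r})/\overline{u}\rfloor$ in closed form via a euclidean division using the formulas in Lemmas~\ref{lem12tri}, \ref{lem13tri}, and \ref{lem22tri}, then compare with $\overline{\mu}'$ from Claim~\ref{claim23tri}. The $n=5$ case gives $\ell-\overline{r}=17$, $\overline{u}=4$, $\overline{\mu}'=1$, so $\overline{\mu}'<4=\lfloor 17/4\rfloor$; the $n=8$ case gives $\ell-\overline{r}=89$, $\overline{u}=17$, $\overline{\mu}'=2$, so $\overline{\mu}'<5=\lfloor 89/17\rfloor$. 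These two computations dispose of part (a) in both parities.

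For even $n\geq 10$, Lemmas~\ref{lem13tri} and \ref{lem22tri} give $\ell-\overline{r}=2f_nf_{n-5}+5$ and $\overline{u}=f_nf_{n-6}-4$. Applying the Fibonacci identity $2f_{n-5}-3f_{n-6}=f_{n-9}$, the division reads
\[ 2f_nf_{n-5}+5=3(f_nf_{n-6}-4)+(f_nf_{n-9}+17), \]
and the remainder bound $f_nf_{n-9}+17<f_nf_{n-6}-4$, which (using $f_{n-6}-f_{n-9}=2f_{n-8}$) reduces to $21<2f_nf_{n-8}$, is immediate for $n\geq 10$. Hence $\lfloor(\ell-\overline{r})/\overline{u}\rfloor=3$, while in each mod-$3$ branch of Claim~\ref{claim23tri} the value $\overline{\mu}'$ clearly exceeds $3$ already at the smallest admissible $n$, yielding the strict inequality.

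For odd $n\geq 7$, analogously $\ell-\overline{r}=f_n(f_{n-2}+f_{n-4})+2$ and $\overline{u}=f_nf_{n-3}-1$, and the identity $f_{n-2}+f_{n-4}-2f_{n-3}=f_{n-6}$ gives
\[ \ell-\overline{r}-2\overline{u}=f_nf_{n-6}+4. \]
The inequality $f_nf_{n-6}+4<f_nf_{n-3}-1$ reduces to $5<2f_nf_{n-5}$, which holds for all $n\geq 7$. Thus $\lfloor(\ell-\overline{r})/\overline{u}\rfloor=2$, and $\overline{\mu}'>2$ in each mod-$3$ subcase of Claim~\ref{claim23tri} is immediate from $\overline{\mu}'\in\{6,8,37,\ldots\}$ for $n=7,9,11,\ldots$.

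The main obstacle is essentially bookkeeping: identifying the Fibonacci identities that make the euclidean division come out cleanly (namely $2f_{n-5}-3f_{n-6}=f_{n-9}$ and $f_{n-2}+f_{n-4}-2f_{n-3}=f_{n-6}$), and checking each mod-$3$ subcase of $\overline{\mu}'$ separately. Since Claim~\ref{claim23tri} already segregates by $n\bmod 3$, no new case analysis is introduced, and the final comparison of a constant ($2$ or $3$) against a quantity that grows with $n$ is straightforward.
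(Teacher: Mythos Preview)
Your proposal is correct and follows exactly the strategy the paper indicates (``arguments similar to those in Corollary~\ref{cor17tri}''): direct substitution for the boundary values $n\in\{5,8\}$, then a Fibonacci-identity-driven euclidean division of $\ell-\overline{r}$ by $\overline{u}$ for the generic even and odd cases, followed by comparison with the values of $\overline{\mu}'$ supplied by Claim~\ref{claim23tri}. If anything, your execution is a touch cleaner than the paper's own proof of Corollary~\ref{cor17tri}: you pin down $\lfloor(\ell-\overline{r})/\overline{u}\rfloor$ \emph{exactly} (as $3$ for even $n\geq10$ and $2$ for odd $n\geq7$) rather than merely upper-bounding it, which makes the final comparison with $\overline{\mu}'$ immediate.
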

	
	\begin{claim}\label{claim25tri}
		If $n\geq 10$ is even, then the applicable values in Theorem~\ref{thm11tri6}(b) are
		\[ \begin{cases} d_1 = \frac{f_{n-1}}{2},\, d_2 = \frac{f_{n+2}}{2},\, p_1 = 2f_{n-5}-1,\, p_2 = f_{n-5}+f_{n-7}-1,  \mbox{ if } n\equiv 1 \pmod{3}, \\[3pt] d_1 = f_{n+1},\, d_2 = \frac{f_{n+3}}{2},\, p_1 = f_{n-5}-1,\, p_2 = 2f_{n-7}-1, \mbox{ if } n\equiv 0 \pmod{3}, \\[3pt] d_1 = \frac{f_{n+1}}{2},\, d_2 = f_{n+2},\, p_1 = f_{n-4}-1,\, p_2 = f_{n-7}-1, \mbox{ if } n\equiv 2 \pmod{3}.
		\end{cases}\]
		
		If $n\geq 7$ is odd, then the applicable values in Theorem~\ref{thm11tri6}(b) are
		\[ \begin{cases} d_1 = \frac{f_{n-1}}{2},\, d_2 = \frac{f_{n+2}}{2},\, p_1 = f_{n-2}+f_{n-4}-1,\, p_2 = \frac{5}{2}f_{n-4}-1,  \mbox{ if } n\equiv 1 \pmod{3}, \\[3pt] d_1 = f_{n+1},\, d_2 = \frac{f_{n+3}}{2},\, p_1 = \frac{f_{n-2}+f_{n-4}}{2}-1,\, p_2 = f_{n-4}+f_{n-6}-1, \mbox{ if } n\equiv 0 \pmod{3}, \\[3pt] d_1 = \frac{f_{n+1}}{2},\, d_2 = f_{n+2},\, p_1 = \frac{f_{n-1}+f_{n-3}}{2}-1,\, p_2 = \frac{f_{n-4}+f_{n-6}}{2}-1, \mbox{ if } n\equiv 2 \pmod{3}.
		\end{cases}\]
	\end{claim}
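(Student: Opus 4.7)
My plan is to prove Claim~\ref{claim25tri} by case analysis on the residue class of $n$ modulo $6$, which combines the parity of $n$ (controlling $\overline{r}$ and $\overline{u}$ through Lemmas~\ref{lem13tri} and \ref{lem22tri}) with the residue $n \bmod 3$ (controlling $\overline{\mu}'$ through Claim~\ref{claim23tri}). In each of the six cases I would substitute the explicit Fibonacci expressions for $\ell$, $\overline{r}$, $\ell-\overline{r}$, $\overline{u}$, and $\overline{\mu}'$; the identification of both $(d_1,d_2)$ and $(p_1,p_2)$ then reduces to the analysis of the sequence $x_i = (\ell-\overline{r}) - (\overline{r}\,i \bmod (\ell-\overline{r}))$ for $0 \leq i \leq \overline{\mu}'$.

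The core tool is the three-distance (Steinhaus) theorem: the sorted gaps of the orbit $\{\overline{r}\,i \bmod (\ell-\overline{r}) : 0 \leq i \leq N\}$ take at most three distinct values, and exactly two precisely when $N$ is a denominator of a convergent of the continued fraction expansion of $\overline{r}/(\ell-\overline{r})$. I would therefore work out this continued fraction in each of the six cases, using identities such as $f_{n-3}-f_{n-6} = f_{n-4}+f_{n-7}$ together with Cassini's identity $f_{n+1}f_{n-1}-f_n^2 = (-1)^n$. I expect the partial quotients to form a short, Fibonacci-like pattern, so that the value of $\overline{\mu}'$ given by Claim~\ref{claim23tri} lands precisely on the denominator of one of the last convergents. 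The numbers $d_1$ and $d_2$ would then be read off as the distances carried by the last two convergents, recovering the closed forms $\tfrac{f_{n-1}}{2}$, $\tfrac{f_{n+2}}{2}$, $f_{n+1}$, $\tfrac{f_{n+3}}{2}$, $\tfrac{f_{n+1}}{2}$, $f_{n+2}$ that appear in the statement.

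For the pairs $(p_1,p_2)$ I would use the Ostrowski numeration in the base given by this continued fraction: $p_i$ is the largest index $j \leq \overline{\mu}'$ whose Ostrowski digits still permit a further step of size $d_i$ without leaving $\overline{\mathcal{X}}$. I expect $p_1 = \overline{\mu}'$ (the natural analogue of Remark~\ref{rem19tri} for the over-lined setting) and $p_2$ to correspond to the denominator of the previous convergent, which matches the claimed Fibonacci expressions after simplification via $f_{n-2}+f_{n-4} = f_{n-1}+f_{n-5}$ and similar identities.

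The main obstacle, and likely the reason the authors label this as a Claim rather than a Theorem, is verifying for each of the six residue classes that $\overline{\mu}'$ is genuinely a convergent denominator, so that the three-distance theorem really delivers exactly two gaps rather than three. This forces one to track the continued fraction of $\overline{r}/(\ell-\overline{r})$ across the Fibonacci recurrence and to show that the truncation point fixed by Claim~\ref{claim23tri} coincides with a convergent; an induction on $n_0$ (writing $n = 6n_0+k$), with base cases verified by direct calculation and the inductive step handled by the standard Fibonacci recurrences applied to both $\overline{r}$ and $\ell-\overline{r}$, appears to be the cleanest way to obtain a uniform argument across all six cases.
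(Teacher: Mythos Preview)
The paper does not prove Claim~\ref{claim25tri} at all. As the authors state explicitly in the Introduction and again in Section~\ref{conclusions}, every result labelled ``Claim'' in Sections~\ref{rar} and~\ref{tri} is presented without proof: they write that they ``have not been able to provide rigorous proofs for them'' and list proving the claims in Subsection~\ref{tri-app} as future work, adding that they ``currently have no ideas about possible tools to use.'' So there is no argument in the paper for you to compare against; your proposal is an attempt to fill a gap the authors left open.

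Your strategy is plausible and, in fact, goes beyond what the paper offers. Reducing the gap structure of $\overline{\mathcal{X}}$ to the three-distance theorem applied to the rotation by $\overline{r}/(\ell-\overline{r})$ is the natural move, and working out the continued fraction of that ratio in each residue class $n \bmod 6$ (so that parity and $n \bmod 3$ are handled jointly) is the right decomposition. Interestingly, in Section~\ref{conclusions} the authors suggest continued fractions only for the claims of Section~\ref{rar}, not for Subsection~\ref{tri-app}, so your proposal supplies a concrete toolkit where the paper has none. One technical caution: the characterisation ``exactly two gap lengths iff $N$ is a convergent denominator'' is not quite the sharp form of the three-distance theorem; the precise condition involves the Ostrowski representation of $N+1$, and you should check carefully in each case whether $\overline{\mu}'+1$ (rather than $\overline{\mu}'$) matches a convergent denominator or a suitable intermediate fraction. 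Beyond that, your outline is a sketch rather than a proof, and the real work---deriving the continued fraction expansion of $\overline{r}/(\ell-\overline{r})$ in closed Fibonacci form and verifying that Claim~\ref{claim23tri}'s value of $\overline{\mu}'$ lands at the right truncation point---remains to be carried out.
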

	
	\begin{remark}\label{rem26tri}
		We have that $p_1=\overline{\mu}'$.
	\end{remark}
	
	\begin{corollary}\label{cor27tri}
		Regarding Theorem~\ref{thm11tri6}(b), it holds that:
		\begin{enumerate}
			\item If $n=3k_0+1$ with $k_0\in\mathbb{N}\setminus\{0\}$, then
			\begin{itemize}
				\item $y_{p_1}=f_n-4$, $y_{p_2}=\frac{f_n+f_{n-2}}{2}-4$, if $n\geq10$ is even,
				\item $y_{p_1}=f_n-1$, $y_{p_2}=\frac{f_n+f_{n-2}}{2}-1$, if $n\geq7$ is odd,
				\item $\mathrm{PF}(S(n)) = \left\{ \left( \frac{f_{n-1}}{2}-1\right)f_{n+1}^2 + (f_n-1)f_{n+2}^2, \left(\frac{f_{n+2}}{2}-1\right)f_{n+1}^2 + \left(\frac{f_n+f_{n-2}}{2}-1\right)f_{n+2}^2 \right\} - f_n^2$.
			\end{itemize}
			
			\item If $n=3k_0+3$ with $k_0\in\mathbb{N}\setminus\{0\}$, then
			\begin{itemize}
				\item $y_{p_1}=\frac{f_n}{2}-4$, $y_{p_2}=f_{n-2}-4$, if $n\geq12$ is even,
				\item $y_{p_1}=\frac{f_n}{2}-1$, $y_{p_2}=f_{n-2}-1$, if $n\geq9$ is odd,
				\item $\mathrm{PF}(S(n)) = \left\{ (f_{n+1}-1)f_{n+1}^2 + \left(\frac{f_n}{2}-1\right)f_{n+2}^2, \left(\frac{f_{n+3}}{2}-1\right)f_{n+1}^2 + (f_{n-2}-1)f_{n+2}^2 \right\} - f_n^2$.
			\end{itemize}
			
			\item If $n=3k_0+2$ with $k_0\in\mathbb{N}\setminus\{0\}$, then
			\begin{itemize}
				\item $y_{p_1}=\frac{f_{n+1}}{2}-4$, $y_{p_2}=\frac{f_{n-2}}{2}-4$, if $n\geq14$ is even,
				\item $y_{p_1}=\frac{f_{n+1}}{2}-1$, $y_{p_2}=\frac{f_{n-2}}{2}-1$, if $n\geq11$ is odd,
				\item $\mathrm{PF}(S(n)) = \left\{ \left(\frac{f_{n+1}}{2}-1\right)f_{n+1}^2 + \left(\frac{f_{n+1}}{2}-1\right)f_{n+2}^2, (f_{n+2}-1)f_{n+1}^2 + \left(\frac{f_{n-2}}{2}-1\right)f_{n+2}^2 \right\} - f_n^2$.
			\end{itemize}
		\end{enumerate}
		Moreover, in any case, $\mathrm{F}(S(n)) = \max(\mathrm{PF}(S(n)))$.
	\end{corollary}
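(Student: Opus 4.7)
The plan is to obtain each pseudo-Frobenius number by direct substitution into Theorem~\ref{thm11tri6}(b), drawing on the values already compiled in Lemma~\ref{lem12tri}, Lemma~\ref{lem13tri}, Lemma~\ref{lem22tri}, Claim~\ref{claim23tri}, and Claim~\ref{claim25tri}. The final identity $\mathrm{F}(S(n))=\max(\mathrm{PF}(S(n)))$ is then immediate, since $\mathrm{F}(S(n))$ always lies in $\mathrm{PF}(S(n))$ and $\mathrm{t}(S(n))=2$ by Proposition~\ref{prop04}, so only two candidates compete.

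The computational core is, for each $i\in\{1,2\}$, to evaluate the floor quantities $\lfloor \ell p_i/(\ell-\overline{r}) \rfloor$ and $\lfloor \overline{r} p_i/(\ell-\overline{r}) \rfloor$, and to combine them as $y_{p_i}=\overline{q}\bigl(\lfloor \ell p_i/(\ell-\overline{r})\rfloor+1\bigr)+\lfloor \overline{r} p_i/(\ell-\overline{r})\rfloor+1$. The key simplifications are the closed forms
$$\ell-\overline{r}=2f_nf_{n-5}+5 \ \text{(even $n$)}, \qquad \ell-\overline{r}=f_n(f_{n-2}+f_{n-4})+2 \ \text{(odd $n$)},$$
obtained from the telescopings $f_{n-3}-f_{n-6}=2f_{n-5}$ and $2f_{n-2}-f_{n-3}=f_{n-2}+f_{n-4}$. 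Together with Cassini's identity $f_{n+1}f_{n-1}-f_n^2=(-1)^n$, each floor reduces to an explicit Fibonacci expression plus a bounded remainder, from which the stated values of $y_{p_i}$ follow. A separate short estimate gives $\lfloor(\ell-1)/(\ell-\overline{r})\rfloor=1$ in every sub-case, as $(\ell-1)/(\ell-\overline{r})$ is seen to lie strictly in $(1,2)$. Substituting into $b(d_i-1)+cy_{p_i}+c\bigl((\overline{q}+1)-2\bigr)-a$ and simplifying produces the two pseudo-Frobenius expressions given in each of cases~1, 2, and~3.

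The main obstacle is bookkeeping rather than insight: six sub-cases (three residues of $n\pmod{3}$, each split by parity), with floor evaluations involving Fibonacci indices as low as $n-9$. A helpful guide is the already-proved Corollary~\ref{cor20tri}, which computes the same set $\mathrm{PF}(S(n))$ through Theorem~\ref{thm08tri5}(b); the coincidence of the two lists of pseudo-Frobenius expressions provides a non-trivial internal consistency test. In practice, I would carry out one representative sub-case in full detail and then note that the remaining sub-cases follow by parallel computations with the appropriate substitutions, the only parity-specific content being the additive constants $-4$ (even, from $\overline{q}+1=5$ combined with the correction term) versus $-1$ (odd, from $\overline{q}+1=2$) inside the $y_{p_i}$.
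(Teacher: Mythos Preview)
Your proposal is correct and aligns with the paper's treatment: the paper states Corollary~\ref{cor27tri} without proof, leaving it as a direct consequence of substituting the data from Lemma~\ref{lem12tri}, Lemma~\ref{lem13tri}, Lemma~\ref{lem22tri}, Claim~\ref{claim23tri}, and Claim~\ref{claim25tri} into Theorem~\ref{thm11tri6}(b), which is exactly what you outline. Your auxiliary observations---the closed forms $\ell-\overline{r}=2f_nf_{n-5}+5$ (even $n$) and $\ell-\overline{r}=f_n(f_{n-2}+f_{n-4})+2$ (odd $n$), and the verification that $\lfloor(\ell-1)/(\ell-\overline{r})\rfloor=1$---are the right ingredients, and the mechanism by which the even-case $y_{p_i}-4$ combines with the correction term $c\bigl((\overline{q}+1)\cdot 1-2\bigr)=3c$ to match the odd-case $y_{p_i}-1$ is precisely what makes the $\mathrm{PF}(S(n))$ formula parity-independent.
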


	\section{Algorithmic process by Rosales and Garc{\'i}a-S{\'a}nchez}\label{rgs}
	
	First, we present the key result for this section.
	
	\begin{lemma}\label{lem02rgs}
		If $n\geq8$, then
		\begin{enumerate}
			\item $f_{n+4}f_n^2 = f_{n+1}f_{n+1}^2 + f_{n-2}f_{n+2}^2$.
			\item $f_{n+2}f_{n+1}^2 = f_{n+2}f_n^2 + f_{n-1}f_{n+2}^2$.
			\item $f_n f_{n+2}^2 = f_{n+3} f_n^2 + f_n f_{n+1}^2$.
		\end{enumerate}
	\end{lemma}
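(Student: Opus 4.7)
The plan is to prove each of the three identities by direct algebraic reduction using only the Fibonacci recurrence $f_{k+1}=f_k+f_{k-1}$; no induction on $n$ is required. In particular, the hypothesis $n\geq 8$ is not essential (the identities hold whenever all indices involved are non-negative), but this does not matter for the application.

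I would handle (2) and (3) first, as each is a one-line difference-of-squares argument. For (3), after cancelling the common factor $f_n$, the claim becomes $f_{n+2}^2-f_{n+1}^2=f_nf_{n+3}$; factoring the left-hand side as $(f_{n+2}-f_{n+1})(f_{n+2}+f_{n+1})$ and using $f_{n+2}-f_{n+1}=f_n$ together with $f_{n+2}+f_{n+1}=f_{n+3}$ closes the case. Identity (2) is symmetric: after cancelling $f_{n+2}$, factor $f_{n+1}^2-f_n^2=(f_{n+1}-f_n)(f_{n+1}+f_n)$ and note that this equals $f_{n-1}f_{n+2}$.

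The substantive case is (1). I would expand $f_{n+2}^2=(f_{n+1}+f_n)^2$ on the right-hand side and regroup as
\[ f_{n+1}^2(f_{n+1}+f_{n-2})+2f_{n-2}f_nf_{n+1}+f_{n-2}f_n^2, \]
then apply the auxiliary identity $f_{n+1}+f_{n-2}=2f_n$, which is immediate from $f_{n-1}+f_{n-2}=f_n$. This lets me pull out a factor of $f_n$ and rewrite the right-hand side as $f_n\bigl(2f_{n+1}(f_{n+1}+f_{n-2})+f_{n-2}f_n\bigr)$; a second application of the same auxiliary identity collapses the expression to $f_n^2(4f_{n+1}+f_{n-2})$. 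Finally, using $f_{n-2}=2f_n-f_{n+1}$ one obtains $4f_{n+1}+f_{n-2}=3f_{n+1}+2f_n$, which equals $f_{n+4}$ by the standard two-step expansion $f_{n+4}=f_{n+3}+f_{n+2}=(2f_{n+1}+f_n)+(f_{n+1}+f_n)$.

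I expect no real obstacle: the calculation is elementary, and the only genuine observation is to spot the reduction $f_{n+1}+f_{n-2}=2f_n$, after which (1) collapses in two short steps. The remaining work is the careful grouping of terms in the expansion.
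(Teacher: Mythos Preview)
Your proof is correct and, for (2) and (3), essentially identical to the paper's: the paper expands $f_{n+1}^2=(f_{n+2}-f_n)^2$ and $f_{n+2}^2=(f_{n+1}+f_n)^2$ respectively, which is your difference-of-squares argument read in the other direction.

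For (1) your computational path differs slightly. The paper writes $f_{n+1}^3=(f_{n+2}-f_n)^3$, expands the cube, and groups the result as $(3f_{n+2}-f_n)f_n^2+(f_{n+2}-3f_n)f_{n+2}^2$, then identifies $3f_{n+2}-f_n=f_{n+4}$ and $f_{n+2}-3f_n=-f_{n-2}$. You instead expand $f_{n+2}^2=(f_{n+1}+f_n)^2$ on the right and apply $f_{n+1}+f_{n-2}=2f_n$ twice to extract the factor $f_n^2$. Both are purely mechanical manipulations of the recurrence; the paper's cubic expansion is marginally more symmetric (the two Fibonacci coefficients $f_{n+4}$ and $f_{n-2}$ fall out simultaneously), while your route avoids the cubic but needs the auxiliary identity $f_{n+1}+f_{n-2}=2f_n$ spotted explicitly. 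Neither approach offers a real advantage over the other.
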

	
	\begin{proof}
		\begin{enumerate}
			\item Since $f_{n+1} f_{n+1}^2 = f_{n+1}^3 = (f_{n+2}-f_n)^3 = f_{n+2}^3 - 3f_{n+2}^2 f_n + 3f_{n+2} f_n^2 - f_n^3 = (3f_{n+2} - f_n)f_n^2 + (f_{n+2}-3f_n)f_{n+2}^2 = f_{n+4}f_n^2 - f_{n-2}f_{n+2}^2$, we deduce the equality.
			
			\item Since $f_{n+1}^2 = (f_{n+2}-f_n)^2 = f_{n+2}^2 - 2f_{n+2} f_n + f_n^2 = (f_{n+2}-2f_n)f_{n+2} + f_n^2 = f_{n-1} f_{n+2} + f_n^2$, then we have that $f_{n+2} f_{n+1}^2 = f_{n+2} f_n^2 + f_{n-1} f_{n+2}^2$.
			
			\item Since $f_{n+2}^2 = (f_{n+1}+f_n)^2 = f_{n+1}^2 + 2f_{n+1}f_n + f_n^2 = f_{n+1}^2 + (2f_{n+1}+f_n)f_n = f_{n+1}^2 + f_{n+3}f_n$, it follows that 
			$f_n f_{n+2}^2 = f_{n+3} f_n^2 + f_n f_{n+1}^2$.
			
			Expression 3 can also be obtained by adding Expressions 1 and 2 and then rearranging the terms of the resulting expression.
		\end{enumerate}
	\end{proof}
	
	As usual, $\#A$ represents the cardinality of the set $A$.
	
	\begin{lemma}\label{lem03rgs}
		$\#\big\{ \{0,\dots,f_{n+2}-1\} \times \{0,\dots,f_n-1\} \setminus \{f_{n+1},\dots,f_{n+2}-1\} \times \{f_{n-2},\dots,f_n-1\} \big\} = 2f_n^2$.
	\end{lemma}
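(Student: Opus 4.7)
My plan is to treat this as a straightforward inclusion-exclusion (really, just a cardinality subtraction) on two rectangular integer grids, using a small Fibonacci identity at the end. I would not expect any genuine obstacle; the statement is essentially an accounting exercise.

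First I would verify the elementary inclusion $\{f_{n+1},\dots,f_{n+2}-1\}\subseteq\{0,\dots,f_{n+2}-1\}$ and $\{f_{n-2},\dots,f_n-1\}\subseteq\{0,\dots,f_n-1\}$, which hold because $f_{n+1}\geq 0$, $f_{n-2}\geq 0$, and both upper endpoints coincide. Consequently the second Cartesian product is contained in the first, so the cardinality of the set-theoretic difference equals the difference of cardinalities, and I may write
\[
\#\bigl(\{0,\dots,f_{n+2}-1\}\times\{0,\dots,f_n-1\}\bigr)-\#\bigl(\{f_{n+1},\dots,f_{n+2}-1\}\times\{f_{n-2},\dots,f_n-1\}\bigr).
\]

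Next I would compute each factor. The first product has cardinality $f_{n+2}\cdot f_n$. For the second, the interval $\{f_{n+1},\dots,f_{n+2}-1\}$ has $f_{n+2}-f_{n+1}=f_n$ elements, and the interval $\{f_{n-2},\dots,f_n-1\}$ has $f_n-f_{n-2}=f_{n-1}$ elements, so the second product has cardinality $f_n\cdot f_{n-1}$.

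Finally I would reduce $f_{n+2}f_n-f_nf_{n-1}=f_n(f_{n+2}-f_{n-1})$ and invoke the trivial Fibonacci identity $f_{n+2}-f_{n-1}=2f_n$, which follows from $f_{n+2}=f_{n+1}+f_n=(f_n+f_{n-1})+f_n=2f_n+f_{n-1}$. This gives $2f_n^2$, as claimed. The only minor point worth stating explicitly is the non-emptiness of the intervals (so that the subtraction of cardinalities is valid with both products non-degenerate), which is automatic for $n\geq 4$ since then $f_{n-2}\geq 1$ and $f_n\geq 1$; there is no genuine hard step.
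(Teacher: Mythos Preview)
Your proof is correct and follows essentially the same route as the paper: compute $f_{n+2}f_n-(f_{n+2}-f_{n+1})(f_n-f_{n-2})=f_{n+2}f_n-f_nf_{n-1}=f_n(f_{n+2}-f_{n-1})=2f_n^2$. You are simply more explicit than the paper about the inclusion of the smaller rectangle in the larger one and about the identity $f_{n+2}-f_{n-1}=2f_n$, but the argument is the same.
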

	
	\begin{proof}
		$\#\big\{ \{0,\dots,f_{n+2}-1\} \times \{0,\dots,f_n-1\} \setminus \{f_{n+1},\dots,f_{n+2}-1\} \times \{f_{n-2},\dots,f_n-1\} \big\} = f_{n+2}f_n - \left( f_{n+2}-f_{n+1} \right) \left( f_n-f_{n-2} \right) = f_{n+2}f_n - f_nf_{n-1} = f_n \left( f_{n+2}-f_{n-1} \right) = 2f_n^2$.
	\end{proof}
	
	\begin{remark}\label{rem04rgs}
		As a consequence of Lemma~\ref{lem02rgs}, we have that $\mathrm{Ap}(S(n),f_n^2) \subseteq \big\{ \lambda f_{n+1}^2 + \mu f_{n+2}^2 \mid (\lambda,\mu)\in \{0,\dots,f_{n+2}-1\} \times \{0,\dots,f_n-1\} \setminus \{f_{n+1},\dots,f_{n+2}-1\} \times \{f_{n-2},\dots,f_n-1\} \big\}$. However, the equality is not true as a consequence of Lemma~\ref{lem03rgs}.
	\end{remark}
	
	\begin{example}\label{exmp05rgs}
		For $S(4)=\langle 9,25,64 \rangle$, we have that
		\[ \mathrm{Ap}(S(4),9) = \{ 0,64,128,75,139,50,114,25,89\}. \]
		Moreover, by Remark~\ref{rem04rgs}, it follows that
		\[ \mathrm{Ap}(S(4),9) \subseteq \big\{ \lambda f_{n+1}^2 + \mu f_{n+2}^2 \mid (\lambda,\mu)\in \{(0,0), (0,1), (0,2), (1,0), (1,1), (1,2), (2,0), (2,1), (2,2), \] \[ (3,0), (3,1), (3,2), (4,0), (4,1), (4,2), (5,0), (6,0), (7,0)\} \big\} = \]
		\[ \{ 0, 64, 128, 25, 29,153, 50, 114, 178, 75, 139, 203, 100, 164, 228, 125, 150, 175 \}. \]
		Observe that for each $i\in\{0,1,\dots,8\}$ there appear two elements congruent to $i$ modulo $9$.
	\end{example}
	
	The following result is easy to prove by induction on $k$.
	
	\begin{lemma}\label{lem06rgs}
		If $k\in\mathbb{N}$, then $f_{3k}$ is even, while $f_{3k+1}$ and $f_{3k+2}$ are odd.
	\end{lemma}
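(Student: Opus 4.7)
The plan is to proceed by straightforward induction on $k$, leveraging the fact that parity propagates through the Fibonacci recurrence in a strictly periodic pattern of period three.

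For the base case $k=0$, I will simply observe $f_0 = 0$, $f_1 = 1$, $f_2 = 1$, which gives the required parities (even, odd, odd) directly from the initial conditions of the Fibonacci sequence.

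For the inductive step, assuming that $f_{3k}$ is even and both $f_{3k+1}$ and $f_{3k+2}$ are odd, I will apply the recurrence $f_{n} = f_{n-1} + f_{n-2}$ three times to conclude about the next block of three:
\begin{align*}
f_{3k+3} &= f_{3k+2} + f_{3k+1} = \text{odd} + \text{odd} = \text{even}, \\
f_{3k+4} &= f_{3k+3} + f_{3k+2} = \text{even} + \text{odd} = \text{odd}, \\
f_{3k+5} &= f_{3k+4} + f_{3k+3} = \text{odd} + \text{even} = \text{odd}.
\end{align*}
This matches the pattern for $k+1$, closing the induction. There is no real obstacle here; the only subtlety, if any, is making sure the starting block $(f_0, f_1, f_2)$ is correctly identified from the convention $f_0=0$, $f_1=1$ fixed in the introduction, after which the three-line parity computation settles everything.
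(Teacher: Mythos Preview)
Your proof is correct and matches the paper's approach: the paper simply states that the result ``is easy to prove by induction on $k$'' without supplying details, and your argument is precisely the natural induction it has in mind.
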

	
	To improve Lemma~\ref{lem02rgs}, we analyse three cases that depend on the parity of $f_n$, $f_{n+1}$, and $f_{n+2}$. We begin with the case where $f_n$ is even (and therefore $f_{n+1}$ and $f_{n+2}$ are odd).
	
	\begin{lemma}\label{lem14rgs}
		If $f_n$ is even (that is, $n=3k$ with $k\in\mathbb{N}\setminus\{0,1\}$), then
		\begin{enumerate}
			\item $f_{n+4}f_n^2 = f_{n+1}f_{n+1}^2 + f_{n-2}f_{n+2}^2$.
			\item $\frac{f_{n+3}}{2}f_{n+1}^2 = \frac{f_{n+2}+f_{n+4}}{2} f_n^2 + \frac{f_{n-3}}{2}f_{n+2}^2$.
			\item $\frac{f_n}{2}f_{n+2}^2 = \frac{f_{n+3}}{2}f_n^2 + \frac{f_n}{2}f_{n+1}^2$.
		\end{enumerate}
	\end{lemma}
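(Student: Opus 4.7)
The plan is to derive all three identities from Lemma~\ref{lem02rgs} combined with the parity information supplied by Lemma~\ref{lem06rgs}. Since we are in the case $n = 3k$ with $k \geq 2$, Lemma~\ref{lem06rgs} tells us that $f_n$, $f_{n+3}$, and $f_{n-3}$ are all even, while $f_{n+2}$ and $f_{n+4} = f_{3(k+1)+1}$ are both odd, so in particular $f_{n+2}+f_{n+4}$ is even. These parities are exactly what is needed to justify the divisions by $2$ that appear in Items~2 and~3.

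First I would dispose of Items~1 and~3. Item~1 of Lemma~\ref{lem14rgs} is identical to Item~1 of Lemma~\ref{lem02rgs}, so nothing new is required. Item~3 is obtained by dividing both sides of Item~3 of Lemma~\ref{lem02rgs} by $2$, which is legitimate because $f_n$ and $f_{n+3}$ are both even under the hypothesis.

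The substantive step is Item~2. Multiplying through by $2$, the target becomes
\[ f_{n+3}\, f_{n+1}^2 \;=\; (f_{n+2}+f_{n+4})\, f_n^2 \;+\; f_{n-3}\, f_{n+2}^2. \]
The plan is to derive this identity by subtracting Item~2 from Item~1 of Lemma~\ref{lem02rgs}:
\[ f_{n+4}f_n^2 - f_{n+2}f_{n+1}^2 \;=\; f_{n+1}f_{n+1}^2 + f_{n-2}f_{n+2}^2 - f_{n+2}f_n^2 - f_{n-1}f_{n+2}^2. \]
Grouping terms by $f_n^2$, $f_{n+1}^2$, $f_{n+2}^2$ and applying the basic Fibonacci identities $f_{n+2}+f_{n+1} = f_{n+3}$ and $f_{n-1}-f_{n-2} = f_{n-3}$ leads at once to the displayed target. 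Dividing by $2$, which is valid because each of $f_{n+3}$, $f_{n-3}$, and $f_{n+2}+f_{n+4}$ is even, yields Item~2 of the lemma.

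The only real obstacle is a minor bookkeeping issue at the boundary: Lemma~\ref{lem02rgs} is stated for $n \geq 8$, whereas the smallest case of Lemma~\ref{lem14rgs} is $n = 6$. I would either verify $n = 6$ by direct numerical computation (the values $f_6=8$, $f_7=13$, $f_8=21$, $f_9=34$, $f_{10}=55$ make this immediate) or simply note that the proof of Lemma~\ref{lem02rgs} uses only Fibonacci relations valid for all $n \geq 2$, so the $n \geq 8$ restriction was merely to match the range of subsequent applications and is not needed here.
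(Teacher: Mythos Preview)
Your proof is correct and essentially mirrors the paper's: Items~1 and~3 are taken directly from Lemma~\ref{lem02rgs} (halving for Item~3), and Item~2 is obtained by a linear combination---the paper adds the already-established Items~1 and~3 of the present lemma and rearranges, whereas you subtract Item~2 from Item~1 of Lemma~\ref{lem02rgs}, but since the three identities of Lemma~\ref{lem02rgs} are linearly dependent these routes are equivalent. Your remark about the $n\geq 8$ hypothesis of Lemma~\ref{lem02rgs} versus the case $n=6$ here is a valid point that the paper does not address.
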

	
	\begin{proof}
		Expression 1 is Expression 1 of Lemma~\ref{lem02rgs}.
		
		On the other hand, since $f_n$ is even, then$f_{n+3}$ is also even (Lemma~\ref{lem06rgs}). This allows us to deduce Expression 3 from Expression 3 of Lemma~\ref{lem02rgs}.
		
		Finally, considering that $f_{n-3}$ is even and, by Lemma~\ref{lem06rgs}, that $f_{n+2}$ and $f_{n+4}$ are odd (and therefore $f_{n+2}+f_{n+4}$ is even), then (adding Expressions 1 and 3)
		\[ f_{n+4}f_n^2 + \frac{f_n}{2} f_{n+2}^2 = f_{n+1}f_{n+1}^2 + f_{n-2}f_{n+2}^2 + \frac{f_{n+3}}{2} f_n^2 + \frac{f_n}{2} f_{n+1}^2 \Rightarrow \]
		\[ \left(f_{n+4} - \frac{f_{n+3}}{2} \right)f_n^2 + \left(\frac{f_n}{2} - f_{n-2}\right)f_{n+2}^2 = \left(f_{n+1}+\frac{f_n}{2}\right)f_{n+1}^2, \]
		from which follows Expression 2.
	\end{proof}
	
	\begin{proposition}\label{prop15rgs}
		If $f_n$ is even (that is, $n=3k$ with $k\in\mathbb{N}\setminus\{0,1\}$), then
		\[ \mathrm{Ap}(S(n),f_n^2) = \left\{ \lambda f_{n+1}^2 + \mu f_{n+2}^2 \mid (\lambda,\mu)\in C_1 \times C_2 \setminus C_3 \times C_4 \right\}, \]
		where
		\[ C_1=\left\{0,\dots,\frac{f_{n+3}}{2}-1\right\}, \;\;
		C_2=\left\{0,\dots,\frac{f_n}{2}-1\right\}, \]
		\[ C_3=\left\{f_{n+1},\dots,\frac{f_{n+3}}{2}-1\right\}, \;\; C_4=\left\{f_{n-2},\dots,\frac{f_n}{2}-1\right\}. \]
	\end{proposition}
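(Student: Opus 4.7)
The plan is to show that the map $(\lambda, \mu) \mapsto \lambda f_{n+1}^2 + \mu f_{n+2}^2$ is a bijection from $T := C_1 \times C_2 \setminus C_3 \times C_4$ onto $\mathrm{Ap}(S(n), f_n^2)$. First I would check that $|T| = f_n^2$, matching $|\mathrm{Ap}(S(n), f_n^2)|$ by Proposition~\ref{prop01}. Using the identities $f_{n+3} - 2f_{n+1} = f_{n+2}-f_{n+1} = f_n$ and $f_n - 2f_{n-2} = f_{n-1}-f_{n-2} = f_{n-3}$, we obtain $|C_3 \times C_4| = (f_n/2)(f_{n-3}/2)$, so $|T| = (f_{n+3}/2)(f_n/2) - (f_n/2)(f_{n-3}/2) = (f_n/4)(f_{n+3} - f_{n-3}) = f_n^2$, the last step using $f_{n+3} - f_{n-3} = 4f_n$.

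The heart of the argument is a reduction procedure on $\mathbb{N}^2$. Reading the three expressions of Lemma~\ref{lem14rgs} modulo $f_n^2$ and tracking how $\beta f_{n+1}^2 + \gamma f_{n+2}^2$ decreases, we obtain three residue-preserving, strictly value-decreasing rewriting rules on pairs $(\beta,\gamma)$: if $\beta \geq f_{n+3}/2$, Expression~2 lets us replace $(\beta, \gamma)$ by $(\beta - f_{n+3}/2,\, \gamma + f_{n-3}/2)$; if $\gamma \geq f_n/2$, Expression~3 lets us replace $(\beta, \gamma)$ by $(\beta + f_n/2,\, \gamma - f_n/2)$; and if $(\beta, \gamma) \in C_3 \times C_4$ (so in particular $\beta \geq f_{n+1}$ and $\gamma \geq f_{n-2}$), Expression~1 lets us replace $(\beta, \gamma)$ by $(\beta - f_{n+1},\, \gamma - f_{n-2})$. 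These three hypotheses exhaust $\mathbb{N}^2 \setminus T$, each shift stays in $\mathbb{N}^2$, and the value $\beta f_{n+1}^2 + \gamma f_{n+2}^2$ strictly decreases, so the process terminates at some $(\lambda, \mu) \in T$.

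To conclude, given any $s \in S(n)$ written as $s = \alpha f_n^2 + \beta f_{n+1}^2 + \gamma f_{n+2}^2$, the reduction applied to $(\beta, \gamma)$ produces a pair in $T$ with the same residue as $s$ modulo $f_n^2$. Hence the map $\phi : T \to \mathbb{Z}/f_n^2\mathbb{Z}$ sending $(\lambda, \mu)$ to $\lambda f_{n+1}^2 + \mu f_{n+2}^2 \bmod f_n^2$ is surjective, and bijective by the cardinality count. Finally, for each $(\lambda, \mu) \in T$, the element $w = \lambda f_{n+1}^2 + \mu f_{n+2}^2$ must be the minimum of $S(n)$ in its residue class: a strictly smaller $s' \in S(n)$ in the same class would, after reducing a representation of $s'$, yield a pair $(\lambda', \mu') \in T$ with $\lambda' f_{n+1}^2 + \mu' f_{n+2}^2 < w$ and identical residue, contradicting injectivity of $\phi$. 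So $w \in \mathrm{Ap}(S(n), f_n^2)$, and the two sets coincide. The main obstacle is the case analysis in the reduction step: one must check that the negation of $(\beta, \gamma) \in T$ in $\mathbb{N}^2$ always triggers at least one of the three rewriting rules and that the resulting coordinates stay nonnegative; the rest of the argument reduces to routine Fibonacci arithmetic.
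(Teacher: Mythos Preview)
Your proof is correct and rests on the same two ingredients as the paper's: the three identities of Lemma~\ref{lem14rgs} and the cardinality count $|T|=f_n^2$. The paper is terser, asserting directly that Lemma~\ref{lem14rgs} yields the inclusion $\mathrm{Ap}(S(n),f_n^2)\subseteq\{\lambda f_{n+1}^2+\mu f_{n+2}^2:(\lambda,\mu)\in T\}$ and then matching cardinalities, whereas you spell out an explicit reduction procedure and a minimality argument to obtain the reverse inclusion first; both routes are straightforward once the identities and the count are in hand.
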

	
	\begin{proof}
		By Lemma~\ref{lem14rgs}, we have that $\mathrm{Ap}(S(n),f_n^2) \subseteq \left\{ \lambda f_{n+1}^2 + \mu f_{n+2}^2 \mid (\lambda,\mu)\in C_1 \times C_2 \setminus C_3 \times C_4 \right\}$.
		
		On the other hand,
		\[ \#\left[ \left\{0,\dots,\frac{f_{n+3}}{2}-1 \right\} \times \left\{0,\dots,\frac{f_n}{2}-1 \right\} \setminus \left\{f_{n+1},\dots,\frac{f_{n+3}}{2}-1 \right\} \times \left\{f_{n-2},\dots,\frac{f_n}{2}-1 \right\} \right] = \]
		\[ \frac{f_{n+3}}{2} \times \frac{f_n}{2} - \left( \frac{f_{n+3}}{2} - f_{n+1} \right) \times \left( \frac{f_n}{2} - f_{n-2} \right) = \frac{f_{n+3}f_n - f_nf_{n-3}}{4} = f_n\frac{f_{n+3} - f_{n-3}}{4} = f_n^2. \]
		Since $\#\mathrm{Ap}(S(n),f_n^2)=f_n^2$, the result is proved.
	\end{proof}
	
	\begin{example}\label{exmp16rgs}
		For $n=6$, we have that $S(6)=\langle 64,169,441 \rangle$ and, by applying Proposition~\ref{prop15rgs},
		\[ \mathrm{Ap}(S(6),64) = \big\{ 169\lambda + 441\mu \mid (\lambda,\mu)\in \{0,\dots,16\} \times \{0,1,2,3\} \setminus \{13,14,15,16\} \times \{3\} \big\}. \]
	\end{example}
	
	Let us now see what happens when $f_{n+1}$ is even (and therefore $f_n$ and $f_{n+2}$ are odd).
	
	\begin{lemma}\label{lem11rgs}
		If $f_{n+1}$ is even (that is, if $n=3k-1$ with $k\in\mathbb{N}\setminus\{0,1\}$), then
		\begin{enumerate}
			\item $\frac{f_{n+4}}{2}f_n^2 = \frac{f_{n+1}}{2}f_{n+1}^2 + \frac{f_{n-2}}{2}f_{n+2}^2$.
			\item $f_{n+2}f_{n+1}^2 = f_{n+2}f_n^2 + f_{n-1}f_{n+2}^2$.
			\item $\frac{f_{n+1}}{2}f_{n+2}^2 = \frac{f_{n+1}}{2}f_n^2 + \frac{f_n+f_{n+2}}{2}f_{n+1}^2$.
		\end{enumerate}
	\end{lemma}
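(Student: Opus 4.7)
The plan is to mirror the structure used for Lemma~\ref{lem14rgs}, combining the base identities of Lemma~\ref{lem02rgs} with the parity information from Lemma~\ref{lem06rgs}. The three expressions should be handled in a specific order: one is inherited verbatim, one is halved, and the third is obtained as a linear combination of the first two.

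First, Expression~2 of Lemma~\ref{lem11rgs} coincides with Expression~2 of Lemma~\ref{lem02rgs}, so there is nothing to prove there. Second, since $f_{n+1}$ is even we have $n+1=3k$, and hence $n-2=3(k-1)$ and $n+4=3(k+1)$ are also multiples of~$3$; by Lemma~\ref{lem06rgs} this makes $f_{n-2}$ and $f_{n+4}$ even as well. Dividing Expression~1 of Lemma~\ref{lem02rgs} by~$2$ therefore produces an identity with integer coefficients, which is precisely Expression~1 of Lemma~\ref{lem11rgs}.

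To obtain Expression~3, I would add the newly proved Expression~1 to Expression~2 and isolate the $f_{n+2}^2$ term, which yields
\[ \left(\tfrac{f_{n-2}}{2}+f_{n-1}\right)f_{n+2}^2 = \left(\tfrac{f_{n+4}}{2}-f_{n+2}\right)f_n^2 + \left(f_{n+2}-\tfrac{f_{n+1}}{2}\right)f_{n+1}^2. \]
Each of the three coefficients collapses via a single use of the Fibonacci recurrence: $f_{n-2}+2f_{n-1}=f_n+f_{n-1}=f_{n+1}$, $f_{n+4}-2f_{n+2}=f_{n+3}-f_{n+2}=f_{n+1}$, and $2f_{n+2}-f_{n+1}=f_{n+2}+f_n$. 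Substituting these three simplifications (each numerator divided by~$2$) gives Expression~3 exactly as stated.

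I do not expect any substantive obstacle: the entire content reduces to the parity check enabling the halving in Expression~1, which is immediate from Lemma~\ref{lem06rgs}, together with the three one-line Fibonacci manipulations above. This parallels perfectly the bookkeeping in the proof of Lemma~\ref{lem14rgs}, with the roles of Expressions~2 and~3 interchanged because now the even index is $n+1$ rather than $n+3$.
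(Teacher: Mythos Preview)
Your proposal is correct and follows essentially the same route as the paper: inherit Expression~2 from Lemma~\ref{lem02rgs}, halve Expression~1 after checking via Lemma~\ref{lem06rgs} that $f_{n-2}$ and $f_{n+4}$ are even, and then add Expressions~1 and~2 and rearrange to obtain Expression~3. The only difference is cosmetic---you spell out the three Fibonacci simplifications of the resulting coefficients explicitly, whereas the paper stops at the rearranged equation and says ``from which we deduce Expression~3.''
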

	\begin{proof}
		Expression 2 is Expression 2 of Lemma~\ref{lem02rgs}.
		
		On the other hand, since $f_{n+1}$ is even, then $f_{n-2}$ and $f_{n+4}$ are also even (Lemma~\ref{lem06rgs}). Thereby, we can deduce Expression 1 from Expression 1 of Lemma~\ref{lem02}.
		
		Finally, having in mind that $f_n$ and $f_{n+2}$ are odd (Lemma~\ref{lem06rgs}), then $f_n+f_{n+2}$ is even. Therefore (adding Expressions 1 and 2)
		\[ \frac{f_{n+4}}{2}f_n^2 + f_{n+2}f_{n+1}^2 = \frac{f_{n+1}}{2}f_{n+1}^2 + \frac{f_{n-2}}{2}f_{n+2}^2 + f_{n+2}f_n^2 + f_{n-1}f_{n+2}^2 \Rightarrow \]
		\[ \left(\frac{f_{n+4}}{2} - f_{n+2} \right)f_n^2 + \left(f_{n+2} - \frac{f_{n+1}}{2}\right)f_{n+1}^2 = \left(\frac{f_{n-2}}{2} + f_{n-1}\right)f_{n+2}^2, \]
		from which we deduce Expression 3.
	\end{proof}
	
	\begin{proposition}\label{prop12rgs}
		If $f_{n+1}$ is even (that is, if $n=3k-1$ with $k\in\mathbb{N}\setminus\{0,1\}$), then
		\[ \mathrm{Ap}(S(n),f_n^2) = \left\{ \lambda f_{n+1}^2 + \mu f_{n+2}^2 \mid (\lambda,\mu)\in C_1 \times C_2 \setminus C_3 \times C_4 \right\}, \]
		where
		\[ C_1=\{0,\dots,f_{n+2}-1\}, \;\; C_2=\left\{0,\dots,\frac{f_{n+1}}{2}-1\right\} \]
		\[ C_3=\left\{\frac{f_{n+1}}{2},\dots,f_{n+2}-1\right\}, \;\; C_4=\left\{\frac{f_{n-2}}{2},\dots,\frac{f_{n+1}}{2}-1\right\}. \]
	\end{proposition}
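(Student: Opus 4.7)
The plan is to follow exactly the two-step strategy used in the proof of Proposition~\ref{prop15rgs}: first use the three identities of Lemma~\ref{lem11rgs} as reduction rules to obtain the inclusion $\mathrm{Ap}(S(n),f_n^2) \subseteq \{ \lambda f_{n+1}^2 + \mu f_{n+2}^2 \mid (\lambda,\mu)\in C_1\times C_2 \setminus C_3\times C_4\}$, and then upgrade it to equality via a cardinality count.

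For the inclusion, any $s\in \mathrm{Ap}(S(n),f_n^2)$ satisfies $s-f_n^2\notin S(n)$, so in particular $s$ admits a representation $s=\lambda f_{n+1}^2+\mu f_{n+2}^2$ with $\lambda,\mu\in\mathbb{N}$. I would then treat each identity of Lemma~\ref{lem11rgs} as a forbidden-pattern rule: Expression~2 shows that $\lambda\ge f_{n+2}$ would let us exchange $f_{n+2}f_{n+1}^2$ for $f_{n+2}f_n^2+f_{n-1}f_{n+2}^2$, producing a positive $f_n^2$-term and contradicting $s-f_n^2\notin S(n)$; Expression~3 rules out $\mu\ge \frac{f_{n+1}}{2}$ in the same way; and Expression~1 simultaneously rules out $\lambda\ge \frac{f_{n+1}}{2}$ together with $\mu\ge \frac{f_{n-2}}{2}$. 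The surviving pairs are precisely $C_1\times C_2\setminus C_3\times C_4$. The parity hypothesis on $f_{n+1}$ is essential here: by Lemma~\ref{lem06rgs} it guarantees that $\frac{f_{n+1}}{2}$, $\frac{f_{n-2}}{2}$, and $\frac{f_{n+4}}{2}$ are integers, so each rewriting takes place inside $S(n)$.

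To close the argument I would compute
\[
\#(C_1\times C_2\setminus C_3\times C_4) = f_{n+2}\cdot\tfrac{f_{n+1}}{2} - \Bigl(f_{n+2}-\tfrac{f_{n+1}}{2}\Bigr)\Bigl(\tfrac{f_{n+1}}{2}-\tfrac{f_{n-2}}{2}\Bigr).
\]
Invoking the elementary Fibonacci identities $f_{n+1}-f_{n-2}=2f_{n-1}$ and $f_{n+2}-f_{n-1}=2f_n$, this simplifies to $f_n(f_{n+1}-f_{n-1})=f_n^2$. Since $\#\mathrm{Ap}(S(n),f_n^2)=f_n^2$ by Proposition~\ref{prop01}, the inclusion combined with this cardinality match forces equality (and, incidentally, injectivity of the map $(\lambda,\mu)\mapsto \lambda f_{n+1}^2+\mu f_{n+2}^2$ on the stated domain).

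The main obstacle I anticipate is the careful bookkeeping of the three reduction rules: one must check in each case that the coefficients produced on the right-hand side are genuinely non-negative integers, which is where the parity assumption on $f_{n+1}$ and the precise halved form of the identities in Lemma~\ref{lem11rgs} (rather than Lemma~\ref{lem02rgs}) become indispensable. The remaining cardinality calculation is a routine Fibonacci telescoping.
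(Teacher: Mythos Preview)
Your proposal is correct and follows essentially the same two-step approach as the paper: the inclusion via the three identities of Lemma~\ref{lem11rgs} (which the paper states in a single line, while you spell out each reduction rule explicitly), followed by the cardinality count showing $\#(C_1\times C_2\setminus C_3\times C_4)=f_n^2$. Your Fibonacci simplification of the cardinality differs slightly in the intermediate steps from the paper's, but both reach $f_n^2$ and the overall strategy is identical.
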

	
	\begin{proof}
		By Lemma~\ref{lem11rgs}, we have that $\mathrm{Ap}(S(n),f_n^2) \subseteq \left\{ \lambda f_{n+1}^2 + \mu f_{n+2}^2 \mid (\lambda,\mu)\in C_1 \times C_2 \setminus C_3 \times C_4 \right\}$.
		
		Since
		\[ \#\left[ \{0,\dots,f_{n+2}-1\} \times \left\{0,\dots,\frac{f_{n+1}}{2}-1\right\} \setminus \left\{\frac{f_{n+1}}{2},\dots,f_{n+2}-1\right\} \times \left\{\frac{f_{n-2}}{2},\dots,\frac{f_{n+1}}{2}-1\right\} \right] = \]
		\[ f_{n+2} \times \frac{f_{n+1}}{2} - \left( f_{n+2} - \frac{f_{n+1}}{2} \right) \times \left( \frac{f_{n+1}}{2} - \frac{f_{n-2}}{2} \right) = \frac{f_{n+2}f_{n-2}}{2} + \frac{f_{n+1}(f_{n+1}-f_{n-2})}{4} = \]
		\[ \frac{f_{n+1}f_{n-2} + f_{n}f_{n-2}}{2} + \frac{f_{n+1}f_{n-1}}{2} = \frac{f_{n+1}f_n + f_{n}f_{n-2}}{2} = f_n\frac{f_{n+1} + f_{n-2}}{2} = f_n^2, \]
		the result is proved.
	\end{proof}
	
	\begin{example}\label{exmp13rgs}
		For $n=5$, we have that $S(5)=\langle 25,64,169 \rangle$ and, by applying Proposition~\ref{prop12rgs}, we conclude that
		\[ \mathrm{Ap}(S(5),25) = \big\{ 64\lambda + 169\mu \mid (\lambda,\mu)\in \{0,\dots,12\} \times \{0,1,2,3\} \setminus \{4,\dots,12\} \times \{1,2,3\} \big\}. \]
	\end{example}
	
	Finally, we show the case when $f_{n+2}$ is even (and therefore $f_n$ and $f_{n+1}$ are odd).
	
	\begin{lemma}\label{lem07rgs}
		If $f_{n+2}$ is even (that is, if $n=3k-2$ with $k\in\mathbb{N}\setminus\{0,1\}$), then
		\begin{enumerate}
			\item $\frac{f_{n+5}}{2}f_n^2 = \frac{f_{n-1}}{2}f_{n+1}^2 + \frac{f_{n-2}+f_n}{2}f_{n+2}^2$.
			\item $\frac{f_{n+2}}{2}f_{n+1}^2 = \frac{f_{n+2}}{2}f_n^2 + \frac{f_{n-1}}{2}f_{n+2}^2$.
			\item $f_nf_{n+2}^2 = f_{n+3}f_n^2 + f_nf_{n+1}^2$.
		\end{enumerate}
	\end{lemma}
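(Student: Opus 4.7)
The plan is to mirror the structure used for Lemmas~\ref{lem14rgs} and \ref{lem11rgs}: one of the three identities is inherited verbatim from Lemma~\ref{lem02rgs}, another is obtained by dividing a companion identity by $2$ after checking parity, and the third is obtained by combining the first two. Here the parity hypothesis $n=3k-2$ with $k\geq 2$ gives, via Lemma~\ref{lem06rgs}, that $f_{n-1}=f_{3(k-1)}$, $f_{n+2}=f_{3k}$, and $f_{n+5}=f_{3(k+1)}$ are all even, while $f_n$ and $f_{n+1}$ are odd. This is exactly what is needed for the halved coefficients appearing in Expressions~1 and 2 to be integers.

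Expression~3 is precisely Expression~3 of Lemma~\ref{lem02rgs}, so no work is required. For Expression~2, I would start from Expression~2 of Lemma~\ref{lem02rgs}, namely $f_{n+2}f_{n+1}^2 = f_{n+2}f_n^2 + f_{n-1}f_{n+2}^2$, and divide through by $2$; the first two terms admit the division because $f_{n+2}$ is even, and the third because $f_{n-1}$ is even (and in fact $f_{n+2}^2$ is already divisible by $4$).

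For Expression~1, I would add the (already established) Expressions~2 and 3 of the present lemma and rearrange, collecting terms by powers $f_n^2$, $f_{n+1}^2$, $f_{n+2}^2$. The coefficient of $f_n^2$ becomes $f_{n+3}+\frac{f_{n+2}}{2}=\frac{2f_{n+3}+f_{n+2}}{2}=\frac{f_{n+3}+f_{n+4}}{2}=\frac{f_{n+5}}{2}$; the coefficient of $f_{n+1}^2$ simplifies via $\frac{f_{n+2}}{2}-f_n=\frac{f_{n+2}-2f_n}{2}=\frac{f_{n+1}-f_n}{2}=\frac{f_{n-1}}{2}$; and the coefficient of $f_{n+2}^2$ becomes $f_n-\frac{f_{n-1}}{2}=\frac{2f_n-f_{n-1}}{2}=\frac{f_n+f_{n-2}}{2}$. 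Moving terms to the appropriate side yields the claimed identity.

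The only potential obstacle is bookkeeping in the rearrangement, but the required Fibonacci identities are elementary consequences of $f_{m+1}=f_m+f_{m-1}$, so the verification reduces to a short algebraic manipulation analogous to the ones carried out in the proofs of Lemmas~\ref{lem14rgs} and \ref{lem11rgs}.
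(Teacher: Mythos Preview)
Your proposal is correct and matches the paper's proof essentially step for step: Expression~3 is inherited from Lemma~\ref{lem02rgs}, Expression~2 is obtained by halving Expression~2 of Lemma~\ref{lem02rgs} using the parity of $f_{n+2}$ and $f_{n-1}$, and Expression~1 is derived by adding Expressions~2 and 3 and rearranging. The only cosmetic difference is that you carry the coefficient simplifications through explicitly, whereas the paper stops at the rearranged form and asserts the conclusion.
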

	\begin{proof}
		Expression 3 is Expression 3 of Lemma~\ref{lem02rgs}.
		
		On the other hand, since $f_{n+2}$ is even, then $f_{n-1}$ is also even (Lemma~\ref{lem06rgs}). Therefore, we deduce Expression 2 from Expression 2 of Lemma~\ref{lem02rgs}.
		
		At last, since $f_{n+5}$ is even and $f_{n-2}$ are $f_n$ odd (Lemma~\ref{lem06rgs}) (and, consequently, $f_{n-2}+f_n$ is even), then (adding Expressions 2 and 3)
		\[ \frac{f_{n+2}}{2}f_{n+1}^2 + f_nf_{n+2}^2 = \frac{f_{n+2}}{2}f_n^2 + \frac{f_{n-1}}{2}f_{n+2}^2 + f_{n+3}f_n^2 + f_nf_{n+1}^2 \Rightarrow \]
		\[ \left(\frac{f_{n+2}}{2} - f_n \right)f_{n+1}^2 + \left(f_n - \frac{f_{n-1}}{2}\right)f_{n+2}^2 = \left(\frac{f_{n+2}}{2} + f_{n+3}\right)f_n^2, \]
		from we get Expression 1.
	\end{proof}
	
	\begin{proposition}\label{prop08rgs}
		If $f_{n+2}$ is even (that is, if $n=3k-2$ with $k\in\mathbb{N}\setminus\{0,1\}$), then
		\[ \mathrm{Ap}(S(n),f_n^2) = \left\{ \lambda f_{n+1}^2 + \mu f_{n+2}^2 \mid (\lambda,\mu)\in C_1 \times C_2 \setminus C_3 \times C_4 \right\}, \]
		where
		\[ C_1=\left\{0,\dots,\frac{f_{n+2}}{2}-1\right\}, \;\; C_2=\{0,\dots,f_n-1\}, \]
		\[ C_3=\left\{\frac{f_{n-1}}{2},\dots,\frac{f_{n+2}}{2}-1\right\}, \;\; C_4=\left\{\frac{f_{n-2}+f_n}{2},\dots,f_n-1\right\}. \]
	\end{proposition}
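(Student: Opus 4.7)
The plan is to mirror the proofs of Propositions~\ref{prop12rgs} and~\ref{prop15rgs}: first establish the inclusion $\mathrm{Ap}(S(n), f_n^2) \subseteq \{ \lambda f_{n+1}^2 + \mu f_{n+2}^2 \mid (\lambda,\mu) \in C_1 \times C_2 \setminus C_3 \times C_4 \}$ from Lemma~\ref{lem07rgs}, and then verify equality by matching cardinalities with $\#\mathrm{Ap}(S(n), f_n^2) = f_n^2$ from Proposition~\ref{prop01}.

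For the inclusion, let $s \in \mathrm{Ap}(S(n), f_n^2)$. Since $s - f_n^2 \notin S(n)$, no representation of $s$ in $S(n)$ can contain a positive multiple of $f_n^2$, so $s = \lambda f_{n+1}^2 + \mu f_{n+2}^2$ for some $\lambda, \mu \in \mathbb{N}$. I would then rule out by contradiction each way that $(\lambda,\mu)$ could fall outside $C_1 \times C_2 \setminus C_3 \times C_4$. If $\mu \geq f_n$, Expression~3 of Lemma~\ref{lem07rgs} yields the rewrite $s = (\lambda + f_n) f_{n+1}^2 + (\mu - f_n) f_{n+2}^2 + f_{n+3} f_n^2$, so $s - f_n^2 \in S(n)$. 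If $\lambda \geq \frac{f_{n+2}}{2}$, Expression~2 yields $s = (\lambda - \frac{f_{n+2}}{2}) f_{n+1}^2 + (\mu + \frac{f_{n-1}}{2}) f_{n+2}^2 + \frac{f_{n+2}}{2} f_n^2$, so again $s - f_n^2 \in S(n)$. Finally, if $(\lambda,\mu) \in C_3 \times C_4$, Expression~1 yields a representation with a coefficient $\frac{f_{n+5}}{2}$ on $f_n^2$ and non-negative residual coefficients on $f_{n+1}^2$ and $f_{n+2}^2$, so $s - f_n^2 \in S(n)$. Each case contradicts $s \in \mathrm{Ap}(S(n), f_n^2)$.

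For the cardinality, I would compute
\[ \#(C_1 \times C_2) - \#(C_3 \times C_4) = \frac{f_{n+2}}{2}\, f_n - \frac{f_{n+2} - f_{n-1}}{2} \cdot \frac{f_n - f_{n-2}}{2}. \]
Using $f_{n+2} - f_{n-1} = 2 f_n$ and $f_n - f_{n-2} = f_{n-1}$, this collapses to $\frac{f_{n+2} f_n}{2} - \frac{f_n f_{n-1}}{2} = \frac{f_n (f_{n+2} - f_{n-1})}{2} = f_n^2$. Together with the inclusion and Proposition~\ref{prop01}, equality of the two sets follows.

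Since the argument is structurally parallel to the two sister propositions, no substantive obstacle is expected. The only mildly delicate point is checking that the three identities of Lemma~\ref{lem07rgs} exactly match the boundary descriptions of $C_1$, $C_2$, $C_3$, $C_4$; this alignment is built into the definition of these sets and is further confirmed a posteriori by the cardinality coincidence $f_n^2$.
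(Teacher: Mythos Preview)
Your proposal is correct and takes essentially the same approach as the paper: the paper's proof simply asserts the inclusion ``By Lemma~\ref{lem07rgs}'' and then performs the identical cardinality count $\frac{f_{n+2}}{2}f_n - f_n\frac{f_n-f_{n-2}}{2} = f_n\frac{f_{n+2}-f_{n-1}}{2} = f_n^2$. Your version is in fact more explicit, since you spell out the three rewriting cases that justify the inclusion, whereas the paper leaves this to the reader.
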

	
	\begin{proof}
		By Lemma~\ref{lem07rgs}, we have that $\mathrm{Ap}(S(n),f_n^2) \subseteq \left\{ \lambda f_{n+1}^2 + \mu f_{n+2}^2 \mid (\lambda,\mu)\in C_1 \times C_2 \setminus C_3 \times C_4 \right\}$.
		
		Since
		\[ \#\left[ \left\{0,\dots,\frac{f_{n+2}}{2}-1\right\} \times \{0,\dots,f_n-1\} \setminus \left\{\frac{f_{n-1}}{2},\dots,\frac{f_{n+2}}{2}-1\right\} \times \left\{\frac{f_{n-2}+f_n}{2},\dots,f_n-1\right\} \right] = \]
		\[ \frac{f_{n+2}}{2} \times f_n - \left( \frac{f_{n+2}}{2} - \frac{f_{n-1}}{2} \right) \times \left( f_n - \frac{f_{n-2}+f_n}{2} \right) = \frac{f_{n+2}}{2}f_n - f_n\frac{f_n-f_{n-2}}{2} = f_n\frac{f_{n+2} - f_{n-1}}{2} = f_n^2, \]
		we have the result.
	\end{proof}
	
	\begin{example}\label{exmp09rgs}
		For $n=4$, we have that $S(4)=\langle 9,25,64 \rangle$ and, by applying Proposition~\ref{prop08rgs},
		\[ \mathrm{Ap}(S(4),9) = \big\{ 25\lambda + 64\mu \mid (\lambda,\mu)\in \{0,1,2,3\} \times \{0,1,2\} \setminus \{1,2,3\} \times \{2\} \big\}. \]
	\end{example}
	
	By Proposition~\ref{prop15rgs}, \ref{prop12rgs}, and \ref{prop08rgs}, we deduce the main result of this section.
	
	\begin{theorem}\label{thm17rgs}
		Let $n\geq4$.
		\begin{enumerate}
			\item If $f_n$ is even (that is, if $n=3k$ with $k\in\mathbb{N}\setminus\{0,1\}$), then $\mathrm{PF}(S(n)) =$
			\[ \left\{ (f_{n+1}-1)f_{n+1}^2 + \left(\frac{f_n}{2}-1\right)f_{n+2}^2 - f_n^2, \left(\frac{f_{n+3}}{2}-1\right)f_{n+1}^2 + (f_{n-2}-1)f_{n+2}^2 - f_n^2 \right\}. \]
			\item If $f_{n+1}$ is even (that is, if $n=3k-1$ with $k\in\mathbb{N}\setminus\{0,1\}$), then $\mathrm{PF}(S(n)) =$
			\[ \left\{ \left(\frac{f_{n+1}}{2}-1\right)f_{n+1}^2 + \left(\frac{f_{n+1}}{2}-1\right)f_{n+2}^2 - f_n^2, (f_{n+2}-1)f_{n+1}^2 + \left(\frac{f_{n-2}}{2}-1\right)f_{n+2}^2 - f_n^2 \right\}. \]
			\item If $f_{n+2}$ is even (that is, if $n=3k-2$ with $k\in\mathbb{N}\setminus\{0,1\}$), then $\mathrm{PF}(S(n)) =$
			\[ \left\{ \left(\frac{f_{n-1}}{2}-1\right)f_{n+1}^2 + (f_n-1)f_{n+2}^2 - f_n^2, \left(\frac{f_{n+2}}{2}-1\right)f_{n+1}^2 + \left(\frac{f_{n-2}+f_n}{2}-1\right)f_{n+2}^2 - f_n^2 \right\}. \]
		\end{enumerate}
	\end{theorem}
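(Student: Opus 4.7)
The plan is to combine Proposition~\ref{prop03} with Proposition~\ref{prop04}: the latter says $\mathrm{t}(S(n))=2$, so $|\mathrm{PF}(S(n))|=2$, while the former identifies $\mathrm{PF}(S(n))$ with $\{w-f_n^2 \mid w\in\mathrm{Maximals}_{\leq_{S(n)}}\mathrm{Ap}(S(n),f_n^2)\}$. Hence in each parity case it suffices to exhibit two distinct maximal elements of the Ap\'ery set and to verify that they yield the two pseudo-Frobenius numbers listed in the statement.

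In each of the three parity cases, by the corresponding Proposition~\ref{prop15rgs}, \ref{prop12rgs}, or \ref{prop08rgs}, $\mathrm{Ap}(S(n),f_n^2)$ is the image of the L-shaped region $R_n=C_1\times C_2\setminus C_3\times C_4$ under the map $(\lambda,\mu)\mapsto \lambda f_{n+1}^2+\mu f_{n+2}^2$, and this parametrization is a bijection (it is surjective by the proposition, and $|R_n|=f_n^2=|\mathrm{Ap}(S(n),f_n^2)|$ by Proposition~\ref{prop01}, forcing injectivity too). The natural candidates for maximal elements are the two ``corners'' of $R_n$: the top of the left arm and the right end of the bottom arm. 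Writing these two corners out in each of the three cases gives exactly the coefficient pairs appearing in the statement.

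To verify maximality of a candidate $w=\lambda_0 f_{n+1}^2+\mu_0 f_{n+2}^2$, I use the standard characterization that $w\in\mathrm{Ap}(S(n),f_n^2)$ is maximal if and only if $w+f_{n+1}^2\notin\mathrm{Ap}(S(n),f_n^2)$ and $w+f_{n+2}^2\notin\mathrm{Ap}(S(n),f_n^2)$. Indeed, if $w+f_{n+1}^2-f_n^2\in S(n)$, then for any $s\in S(n)$ one has $w+f_{n+1}^2+s-f_n^2\in S(n)$, ruling out $w+f_{n+1}^2+s\in\mathrm{Ap}(S(n),f_n^2)$; the same argument with $f_{n+2}^2$ covers the other generator, while a summand $f_n^2$ is trivial. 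Concretely, I must show that both $w+f_{n+1}^2-f_n^2$ and $w+f_{n+2}^2-f_n^2$ lie in $S(n)$, and these memberships follow by applying the three relations of the appropriate Lemma~\ref{lem14rgs}, \ref{lem11rgs}, or \ref{lem07rgs}, which are precisely tailored to the geometry of $R_n$: each relation expresses a boundary crossing of $R_n$ as a positive multiple of $f_n^2$ plus a non-negative $\mathbb{N}$-combination of $f_{n+1}^2$ and $f_{n+2}^2$.

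The main obstacle is clerical: one must carry out six analogous reductions (two corners for each of the three parity cases) and check that the residual coefficients on $f_n^2,f_{n+1}^2,f_{n+2}^2$ (expressions such as $\frac{f_{n-3}}{2}-1$ or $\frac{f_n}{2}-1-f_{n-2}$) are non-negative for the relevant range of $n$. These reductions to elementary Fibonacci inequalities hold for $n$ sufficiently large in each congruence class, and the small cases $n\in\{4,5,6\}$ are already illustrated by Examples~\ref{exmp05rgs}, \ref{exmp09rgs}, \ref{exmp13rgs}, and \ref{exmp16rgs}. No step requires new ideas beyond the lemmas established earlier in this section.
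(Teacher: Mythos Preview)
Your proposal is correct and follows the same route the paper intends: the paper merely states that the theorem is deduced from Propositions~\ref{prop15rgs}, \ref{prop12rgs}, and \ref{prop08rgs}, and you spell out how---via Proposition~\ref{prop03}, Proposition~\ref{prop04}, and the L-shaped description of $\mathrm{Ap}(S(n),f_n^2)$---the two corner points of $R_n$ yield the two pseudo-Frobenius numbers. One minor simplification: your verification of maximality through the relations of Lemmas~\ref{lem14rgs}, \ref{lem11rgs}, \ref{lem07rgs} and the attendant non-negativity checks is heavier than needed; since the map $(\lambda,\mu)\mapsto\lambda f_{n+1}^2+\mu f_{n+2}^2$ is a bijection from $R_n$ onto $\mathrm{Ap}(S(n),f_n^2)$, the fact that $(\lambda_0+1,\mu_0)$ and $(\lambda_0,\mu_0+1)$ lie outside $R_n$ already forces $w+f_{n+1}^2$ and $w+f_{n+2}^2$ out of the Ap\'ery set (any coincidence with another parameter pair is ruled out by $\gcd(f_{n+1}^2,f_{n+2}^2)=1$ and the small range of $\lambda,\mu$), so no case-by-case Fibonacci inequalities are required.
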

	
	The solution to the proposed Frobenius problem is derived directly from the above theorem.
	
	\begin{corollary}\label{cor18rgs}
		Let $n\geq4$.
		\begin{enumerate}
			\item If $f_n$ is even (that is, if $n=3k$ with $k\in\mathbb{N}\setminus\{0,1\}$), then
			\[ \mathrm{F}(S(n)) = \left(\frac{f_{n+3}}{2}-1\right)f_{n+1}^2 + (f_{n-2}-1)f_{n+2}^2 - f_n^2. \]
			\item If $f_{n+1}$ is even (that is, if $n=3k-1$ with $k\in\mathbb{N}\setminus\{0,1\}$), then
			\[ \mathrm{F}(S(n)) = (f_{n+2}-1)f_{n+1}^2 + \left(\frac{f_{n-2}}{2}-1\right)f_{n+2}^2 - f_n^2. \]
			\item If $f_{n+2}$ is even (that is, if $n=3k-2$ with $k\in\mathbb{N}\setminus\{0,1\}$), then
			\[ \mathrm{F}(S(n)) = \left(\frac{f_{n+2}}{2}-1\right)f_{n+1}^2 + \left(\frac{f_{n-2}+f_n}{2}-1\right)f_{n+2}^2 - f_n^2. \]
		\end{enumerate}
	\end{corollary}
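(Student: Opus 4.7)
The plan rests on the basic fact that for every numerical semigroup $S$ one has $\mathrm{F}(S)=\max(\mathrm{PF}(S))$: the Frobenius number is itself a pseudo-Frobenius number (the largest gap is certainly a gap, and any integer strictly exceeding it lies in $S$). Combined with the explicit two-element descriptions of $\mathrm{PF}(S(n))$ provided by Theorem~\ref{thm17rgs}, the corollary reduces, in each of the three parity cases, to deciding which of the two listed elements is the larger.

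To do this, I would form the difference of the two candidates in each case and simplify it using the Fibonacci recurrence. In Case~1 ($n=3k$), the identities $\frac{f_{n+3}}{2}-f_{n+1}=\frac{f_n}{2}$ and $f_{n-2}-\frac{f_n}{2}=-\frac{f_{n-3}}{2}$ collapse the difference to $\frac{1}{2}\bigl(f_nf_{n+1}^2-f_{n-3}f_{n+2}^2\bigr)$. In Case~3 ($n=3k-2$), using $f_{n+2}-f_{n-1}=2f_n$ and $f_{n-2}-f_n=-f_{n-1}$, the difference becomes $\frac{1}{2}\bigl(2f_nf_{n+1}^2-f_{n-1}f_{n+2}^2\bigr)$. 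In Case~2 ($n=3k-1$), a similar computation (using $2f_{n+2}-f_{n+1}=f_{n+1}+2f_n$ and $f_{n+1}-f_{n-2}=f_n+f_{n-3}$) yields $\frac{1}{2}\bigl((f_{n+1}+2f_n)f_{n+1}^2-(f_n+f_{n-3})f_{n+2}^2\bigr)$. The remaining task is then to show that each of these three quantities is positive in the prescribed range of $n$.

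Cases~1 and~3 are routine: after expanding $f_{n+2}^2=f_{n+1}^2+2f_{n+1}f_n+f_n^2$ and using $f_n-f_{n-3}=2f_{n-2}$ (respectively $2f_n-f_{n-1}=f_n+f_{n-2}$), both inequalities reduce to transparent comparisons of Fibonacci products. The main obstacle is Case~2, whose leading behaviour gives a ratio of only about $1.12$ between the two candidates as $n\to\infty$, so any crude bound risks being too weak. I would attack it by rewriting the difference through repeated substitutions of $f_{n+2}=f_{n+1}+f_n$ combined with Cassini's identity $f_{n+1}f_{n-1}-f_n^2=(-1)^n$, reducing it to a short expression in two consecutive Fibonacci numbers whose positivity can be read off directly. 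A safe fallback, should the manipulation turn out to be unwieldy, is to verify the small base cases $n\in\{5,8,11,14\}$ by direct substitution and then complete the proof by induction on $k$ within each parity class, using the Fibonacci recurrence to reduce the inductive step to an elementary algebraic identity.
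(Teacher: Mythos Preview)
Your proposal is correct and follows essentially the same route as the paper: compute the difference of the two pseudo-Frobenius numbers in each case, simplify via the Fibonacci recurrence, and verify positivity. Your reductions in Cases~1 and~3 match the paper's exactly. The only point worth flagging is that you overestimate Case~2: noting that $f_n+f_{n-3}=2f_{n-1}$, your difference becomes $\tfrac12\bigl((f_{n+1}+2f_n)f_{n+1}^2-2f_{n-1}f_{n+2}^2\bigr)$, and the paper handles it by the same kind of direct expansion (no Cassini, no induction), reducing to $f_{n-1}^2f_{n+1}<2f_{n-2}f_nf_{n+1}+f_{n-2}f_n^2$, which follows at once from $f_{n-1}<2f_{n-2}$.
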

	
	\begin{proof}
		\begin{enumerate}
			\item We have that
			\[ (f_{n+1}-1)f_{n+1}^2 + \left(\frac{f_n}{2}-1\right)f_{n+2}^2 - f_n^2 < \left(\frac{f_{n+3}}{2}-1\right)f_{n+1}^2 + (f_{n-2}-1)f_{n+2}^2 -f_n^2 \Longleftrightarrow \]
			\[ 2f_{n+1}f_{n+1}^2 + f_nf_{n+2}^2 < f_{n+3}f_{n+1}^2 + 2f_{n-2}f_{n+2}^2 \Longleftrightarrow f_{n-3}f_{n+2}^2 < f_nf_{n+1}^2 \Longleftrightarrow \]
			\[ 2f_{n-3}f_nf_{n+1} + f_{n-3}f_n^2 < 2f_{n-2}f_{n+1}^2 \Longleftrightarrow f_{n-3}f_n^2 < 2(f_{n-4}f_n+f_{n-2}f_{n-1})f_{n+1}. \]
			Since $f_n < 2f_{n-1}$ for all $n\geq 4$, then $f_{n-3}\cdot f_n\cdot f_n < f_{n-2}\cdot 2f_{n-1}\cdot f_{n+1}$ and the result is proved.
			
			\item We have that
			\[ \left(\frac{f_{n+1}}{2}-1\right)f_{n+1}^2 + \left(\frac{f_{n+1}}{2}-1\right)f_{n+2}^2 - f_n^2 < (f_{n+2}-1)f_{n+1}^2 + \left(\frac{f_{n-2}}{2}-1\right)f_{n+2}^2 - f_n^2 \Longleftrightarrow \]
			\[ f_{n+1}f_{n+1}^2 + f_{n+1}f_{n+2}^2 < 2f_{n+2}f_{n+1}^2 + f_{n-2}f_{n+2}^2 \Longleftrightarrow f_{n+1}^3 < f_{n-1}f_{n+1}f_{n+2} + f_{n-2}f_{n+2}^2 \Longleftrightarrow \]
			\[ 2f_{n-1}f_{n+1}^2 < f_{n-1}f_{n+1}f_{n+2} + 2f_{n-2}f_nf_{n+1} + f_{n-2}f_n^2 \Longleftrightarrow f_{n-1}^2f_{n+1} < 2f_{n-2}f_nf_{n+1} + f_{n-2}f_n^2. \]
			Since $f_{n-1} < 2f_{n-2}$ for all $n\geq 4$, then $f_{n-1}\cdot f_{n-1}\cdot f_{n+1} < 2f_{n-2}\cdot f_n\cdot f_{n+1}$ and the result is proved.
			
			\item We have that
			\[ \left(\frac{f_{n-1}}{2}-1\right)f_{n+1}^2 + (f_n-1)f_{n+2}^2 - f_n^2 < \left(\frac{f_{n+2}}{2}-1\right)f_{n+1}^2 + \left(\frac{f_{n-2}+f_n}{2}-1\right)f_{n+2}^2 - f_n^2 \Longleftrightarrow \]
			\[ f_{n-1}f_{n+1}^2 + 2f_nf_{n+2} < f_{n+2}f_{n+1}^2 + (f_{n-2}+f_n)f_{n+2}^2 \Longleftrightarrow f_{n-1}f_{n+2}^2 < 2f_nf_{n+1}^2 \Longleftrightarrow \]
			\[ f_{n-1}f_{n+1}^2 + f_{n-1}f_n^2 < 2f_n^2f_{n+1} \Longleftrightarrow 2f_{n-1}^2f_n + f_{n-1}^3 < 2f_n^3 \Longleftrightarrow f_{n-1}^3 < 2f_{n-2}f_nf_{n+1}. \]
			Since $f_{n-1} < 2f_{n-2}$ for all $n\geq 4$, then $f_{n-1}\cdot f_{n-1}\cdot f_{n-1} < 2f_{n-2}\cdot f_n\cdot f_{n+1}$ and the result is proved.
		\end{enumerate}
	\end{proof}
	
	We finish with an illustrative example.
	
	\begin{example}\label{exmp19rgs}
		\begin{enumerate}
			\item $S(4)=\langle 9,25,64 \rangle$, $\mathrm{PF}(S(4))=\{119,130\}$, and $\mathrm{F}(S(4))=130$.			
			\item $S(5)=\langle 25,64,169 \rangle$, $\mathrm{PF}(S(5))=\{674,743\}$, and $\mathrm{F}(S(5))=743$.
			\item $S(6)=\langle 64,169,441 \rangle$, $\mathrm{PF}(S(6))=\{3287,3522\}$, and $\mathrm{F}(S(6))=3522$.
		\end{enumerate}
	\end{example}

	\section{Conclusions}\label{conclusions}
	
	After reviewing Sections \ref{rar}, \ref{tri}, and \ref{rgs}, we believe that the algorithmic process proposed by Rosales and Garc{\'i}a-S{\'a}nchez is more convenient than those by Ram{\'i}rez Alfons{\'i}n and R{\o}dseth or Tripathy for finding a general formula to solve the Frobenius problem for a family of numerical semigroups. In fact, all steps in Section \ref{rgs} have been rigorously proven. However, in Sections \ref{rar} and \ref{tri}, we have not been able to demonstrate all the results. Furthermore, we even had to propose alternative statements for some results in Subsection \ref{tri-pre} because several errors were detected in the original paper (\cite{tripathi}). These results were also mentioned, without proofs, in \cite{suhajda}.
	
	As future work, we propose to provide rigorous proofs for all claims in Section \ref{rar} (we suspect this will require using continued fractions and difference equations) and Subsection \ref{tri-app} (currently, we have no ideas about possible tools to use). Additionally, we aim to prove the alternative statements of the theorems in Subsection \ref{tri-app}, for which we also lack immediate ideas on the necessary tools.

	

	\section*{Declarations}
		
	\begin{itemize}
		\item \textbf{Funding.} The authors are supported by Proyecto de Excelencia de la Junta de Andaluc{\'i}a (ProyExcel 00868) and Junta de Andaluc{\'i}a Grant Number FQM-343.
		\item \textbf{Conflict of interest/Competing interests.} The authors declare that they have no conflicts of interest.
		\item \textbf{Data availability.} Data sharing does not apply to this article as no datasets were generated or analysed during the current study.
		\item \textbf{Author contribution.} The authors contributed equally to this work.
	\end{itemize}

\end{document}